\documentclass[fleqn,a4paper, twoside]{article}

\usepackage[
	top = 1.5 in, 
	bottom = 1.25 in,
	left = 1.25 in, 
	right = 1.25 in]{geometry}
\usepackage{scrextend}
\changefontsizes{11pt}


\usepackage{mathptmx}
\usepackage[T1]{fontenc}

\usepackage[all]{nowidow}

\usepackage[utf8x]{inputenc}
\usepackage{amsfonts,amssymb,amsmath}
\usepackage{amsrefs}
\usepackage{url}
\usepackage[new]{old-arrows} 
\usepackage{array}

\BibSpec{article}{%
    +{}  {\PrintAuthors}                {author}
    +{,} { \textit}                     {title}
    +{.} { }                            {part}
    +{:} { \textit}                     {subtitle}
    +{,} { \PrintContributions}         {contribution}
    +{.} { \PrintPartials}              {partial}
    +{,} { }                            {journal}
    +{}  { \textbf}                     {volume}
    +{}  { \PrintDatePV}                {date}
    +{,} { \issuetext}                  {number}
    +{,} { \eprintpages}                {pages}
    +{,} { }                            {status}
    +{,} { \url}                        {url}    
    +{,} { \PrintDOI}                   {doi}
    +{,} { available at \eprint}        {eprint}
    +{}  { \parenthesize}               {language}
    +{}  { \PrintTranslation}           {translation}
    +{;} { \PrintReprint}               {reprint}
    +{.} { }                            {note}
    +{.} {}                             {transition}
    +{}  {\SentenceSpace \PrintReviews} {review}
}

\usepackage{graphicx}
\usepackage[
	british]{babel}
\usepackage{caption}
\usepackage{mathdots}
\usepackage{mathtools} 
\usepackage{amsmath}
\usepackage{amsfonts}
\usepackage{amssymb}

\usepackage{pgf,tikz}
\usepackage{tikz-cd}
\pgfdeclarelayer{bg}    
\pgfsetlayers{bg,main}  
\usetikzlibrary{calc}
\usetikzlibrary{matrix,arrows}
\usepackage{stackrel}
\usepackage[shortlabels]{enumitem} 
\usepackage{stmaryrd} 
\usepackage{setspace} 
\spacing{1.15}
\usepackage{etoolbox}
\usetikzlibrary{trees}
\usepackage{enumitem}
\listfiles
\setlist[enumerate]{label= (\arabic*)}

\patchcmd{\section}{\normalfont}{\normalfont\large}{}{}

\renewenvironment{abstract}{%
\small\begin{center}
\begin{minipage}{.9\textwidth}
}
{\par\noindent\end{minipage}\end{center}\vspace{3 em}}
\makeatletter
\renewcommand\@maketitle{%
\hfill
\begin{center}\begin{minipage}{0.9 	\textwidth}
\centering
\vskip 2em
\let\footnote\thanks 
{\LARGE \@title \par }
\vspace{-.5 em}
\vskip 1 em
{\large \@author \par}
\vspace{3.5 em}

\end{minipage}\end{center}
\par
}
\makeatother
%

 \usepackage{fancyhdr}
\pagestyle{fancy}
\fancyhead[RE]{\small\it Minimal Models for Monomial Algebras}
\fancyhead[LO]{\small\it Pedro Tamaroff}
\fancyhead[RO,LE]{\small\bf\thepage}
\fancyfoot[L,R,C]{}

\usepackage{wrapfig}


 \definecolor{newcol}{rgb}{0,0,0}
 \definecolor{niceblue}{rgb}{0.0, 0.18, 0.39}

\DeclareTextFontCommand{\new}{\color{newcol}\em}

\makeatletter
\AtBeginDocument{%
  \let\[\@undefined
  
\DeclareRobustCommand{\[}{\begin{equation}}%
  \let\]\@undefined
  
\DeclareRobustCommand{\]}{\end{equation}}%
}
\makeatother 
\mathtoolsset{showonlyrefs,showmanualtags}

\usepackage{amsthm}
\usepackage{thmtools}

\usepackage{stmaryrd}

\usepackage{amsthm}
\usepackage{thmtools}
\newtheoremstyle{mytheorem}
  {\topsep}   
  {\topsep}   
  {\itshape}  
  {0pt}       
  {\bfseries\color{newcol}} 
  {}         
  {5pt plus 1pt minus 1pt} 
  {}          
\theoremstyle{mytheorem}
\newtheorem{theorem}{Theorem}[section]
\newtheorem{cor}[theorem]{Corollary}
\newtheorem{lemma}[theorem]{Lemma}
\newtheorem{prop}[theorem]{Proposition}

\theoremstyle{mytheorem}
\newtheorem*{theorem*}{Theorem}
\newtheorem*{lemma*}{Lemma}
\newtheorem*{corollary*}{Corollary}
\newtheorem*{conjecture*}{Conjecture}

\DeclareTextFontCommand{\new}{\color{newcol}\bf\em} 

\usepackage{sectsty}
\chapterfont{\color{newcol}}  
\sectionfont{\color{newcol}}  
\subsectionfont{\color{newcol}}  

\usepackage{sectsty}
\chapterfont{\color{newcol}}  
\sectionfont{\color{newcol}}  
\subsectionfont{\color{newcol}}  


\theoremstyle{plain} 
\newcommand{\thistheoremname}{}
\newtheorem{genericthm}[theorem]{\thistheoremname}

\newcommand{\imor}{\interleave\kern-.45em\longrightarrow}

\newcommand{\DCSH}{\mathsf{Cog_\infty}}

\newcommand{\Mod}{\mathsf{Mod}}

\newcommand{\Der}{\operatorname{Der}}
\newcommand{\Coder}{\operatorname{Coder}}
\newcommand{\uk}{\underline{\kk}}

\newcommand{\vv}{\vert}
\newcommand{\HH}{\mathrm{HH}}

\newenvironment{tenumerate}{
\begin{enumerate}
  \setlength{\itemsep}{0pt}
  \setlength{\parskip}{0pt}
}{\end{enumerate}}

\definecolor{comcol}{rgb}{0.00, 0.00, 0.00}

\usepackage[pdftex, 
			colorlinks,
			bookmarks = true,
			bookmarksnumbered = true,
			]{hyperref}
			
\hypersetup{colorlinks,
 linkcolor={niceblue},
 citecolor={niceblue},
 urlcolor={niceblue}}

\newenvironment{titemize}{
\begin{itemize}
  \setlength{\itemsep}{0pt}
  \setlength{\parskip}{0pt}
}{\end{itemize}}

\usepackage{xargs}                      
\usepackage[colorinlistoftodos,prependcaption,textsize=small]{todonotes}
\newcommandx{\unsure}[2][1=]{\todo[linecolor=blue,backgroundcolor=blue!25!white,bordercolor=blue,#1]{#2}}
\newcommandx{\change}[2][1=]{\todo[linecolor=blue,backgroundcolor=blue!25,bordercolor=blue,#1]{#2}}
\newcommandx{\info}[2][1=]{\todo[linecolor=OliveGreen,backgroundcolor=OliveGreen!25,bordercolor=OliveGreen,#1]{#2}}
\newcommandx{\improvement}[2][1=]{\todo[linecolor=Plum,backgroundcolor=Plum!25,bordercolor=Plum,#1]{#2}}
\newcommandx{\thiswillnotshow}[2][1=]{\todo[disable,#1]{#2}}

\usepackage{multicol}

\newcommand{\DA}{\mathsf{Alg}}
\newcommand{\Gmod}{\mathsf{GMod}}

\newcommand{\NN}{\mathbb N}
\newcommand{\kk}{\Bbbk}

\newcommand{\Ext}{\operatorname{Ext}}
\newcommand{\Tor}{\operatorname{Tor}}


\newcommand{\Addresses}{{
  \bigskip
  \footnotesize

  \noindent\textsc{School of Mathematics, Trinity College, Dublin 2, Ireland}\par\nopagebreak
  \noindent\textit{E-mail address:} \texttt{pedro@maths.tcd.ie}
  }}
  
\usepackage{titletoc}

\titlecontents{chapter}
[0.2em] %
{\bigskip}
{\makebox[2em][r]{\thecontentslabel.}\hspace{0.333em}}
{\hspace*{-2em}}
{\hfill\contentspage}[\smallskip]

\titlecontents{section}
[0.5em] 
{\small}
{\thecontentslabel.\hspace{3pt}}
{}
{\enspace\titlerule*[0.5pc]{.}\contentspage}
\titlecontents*{subsection}
[1 em]
{\footnotesize}
{\thecontentslabel. \hspace{3pt}}
{}
{}
[ --- \ ]
[]
\setcounter{tocdepth}{2}

\setlength\parindent{.5 em}

\raggedbottom 
\makeindex

\title{\textbf{Minimal models for monomial algebras}}
\author{\textsc{Pedro Tamaroff}}
\date{}

\begin{document}
\maketitle

\begin{abstract}
We give, for any monomial algebra $A$, an explicit description of its minimal 
model, which also provides us with formulas for a canonical $A_\infty$-structure
on the Ext-algebra of the trivial $A$-module. We do this by exploiting the 
combinatorics of chains going back to works of Anick, Green, Happel and Zacharia,
and the algebraic discrete Morse theory of J\"ollenbeck, Welker and Sk\"oldberg. We then
show how this result can be used to obtain models
for algebras with a chosen Gröbner basis, and briefly
outline
how to compute some classical homological invariants with it. 

\medskip
{\textbf{MSC 2020:} 16E05, 16E40, 16E45, 18G15, 18N40}.
\end{abstract}
\thispagestyle{empty}



\section{Introduction}\label{sec:intro}
Understanding $A_\infty$-structures associated to differential graded 
associative (dga) algebras is central to 
understanding in turn, the homotopy category of the category $\DA$ of 
dga algebras. More precisely, one can, in principle, compute in the homotopy 
category of $\DA$ by considering the category of quasi-free dga algebras or, 
equivalently, $A_\infty$-coalgebras, modulo the usual relation of homotopy 
between morphisms in $\DA$: the quasi-free dga algebras are
cofibrant in $\DA$, where the weak equivalences are the quasi-isomorphisms 
and the fibrations are the degree-wise epimorphisms; see~\cite{Hinich} and \cite{HTHA}*{Proposition 1.5}.

In particular, we may use $A_\infty$-coalgebras to understand usual
(non-dg) associative algebras. For any augmented algebra $A$ over a field 
$\kk$ one can produce, from the bar construction $BA$ of $A$, the class of minimal $A_\infty$-coalgebra 
structures on $\Tor_A(\kk,\kk)$. Among other things, these 
determine $A$ up to isomorphism, and may be used to compute its Hochschild 
cohomology or obtain the minimal model of $A$; see \cite{Kell,Has}. 
The explicit computation of such higher structures is therefore of 
interest. The machinery of Gr\"obner bases and homological perturbation theory 
suggest that a possible step towards solving this problem is to first 
obtain an answer for monomial algebras. In this paper we provide a complete 
description of a canonical minimal $A_\infty$-coalgebra structure on 
$\Tor_A(\kk,\kk)$ for a monomial algebra $A$ in terms of the combinatorics of its chains. 
Equivalently, we completely describe a minimal model of $A$ as the $\infty$-cobar
construction $\Omega_\infty\!\Tor_A(\kk,\kk)$. The results extend without 
modification to describe minimal models of monomial quiver algebras in terms of the combinatorics of their chains; see \cite{GHZ}.

Concretely, let $\gamma$ be a basis element of $\Tor_A^{r+1}(\kk,\kk)$, 
represented by an Anick chain of length $r\in\NN$ and let us take $n\in
\NN_{\geqslant 2}$. A \new{decomposition} of $\gamma$ is a tuple $
(\gamma_1,\ldots,\gamma_n)$ of chains of respective lengths $(r_1,\ldots,r_n)$ satisfying $r_1+\cdots
+r_n=r-1$ and whose concatenation, in this order, is~$\gamma$. Our result
is the following.

\begin{theorem*} For each monomial algebra $A$ there is a minimal model
$B\longrightarrow A$ where $B$ is the $\infty$-cobar construction on $\Tor_A(\kk,\kk)$. The differential $d$ is such that for a chain $\gamma\in\Tor_A(\kk,\kk)$,
\begin{align*} d \gamma  = -\sum_{n\geqslant 2}
	(-1)^{\binom{n+1}{2}+|\gamma_1|} 
	 	 \gamma_1 \cdots \gamma_n, 
	 		\end{align*}
where the sum ranges through all possible decompositions of $\gamma$.\qed
	 \end{theorem*}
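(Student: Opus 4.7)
The plan is to realise $\mathcal M$ by transferring the strict coalgebra structure on the bar construction $BA$ to its homology $\Tor_A(\kk,\kk)$ and then applying the $\infty$-cobar construction. By the theory of cofibrant replacements in $\DA$ recalled in the introduction, once we obtain a minimal $A_\infty$-coalgebra $(\Tor_A(\kk,\kk),\{\Delta_n\}_{n\geqslant 2})$ by transfer, its $\infty$-cobar construction $\Omega_\infty\Tor_A(\kk,\kk)$ is automatically a cofibrant model for $A$. The problem therefore reduces to (i) producing an explicit contraction of $BA$ onto $\Tor_A(\kk,\kk)$ and (ii) identifying the transferred cooperations in terms of Anick chains.

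For step (i) I would apply the algebraic discrete Morse theory of Sk\"oldberg and J\"ollenbeck--Welker to the normalised bar complex $\overline{BA}$, whose basis consists of tuples $[a_1|\cdots|a_r]$ of non-identity monomials in $A$. Exploiting the combinatorics of chains going back to Anick and Green--Happel--Zacharia, one defines a matching on this basis whose critical cells are precisely the Anick chains; the associated splitting theorem then produces the three maps $\pi\colon BA\to \Tor_A(\kk,\kk)$, $\iota\colon\Tor_A(\kk,\kk)\to BA$ and $h\colon BA\to BA$ of a deformation retract, each described concretely in terms of the rewriting rules furnished by the matching.

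For step (ii) I apply the homotopy transfer theorem to this contraction. Since the coproduct on $BA$ is the binary deconcatenation, the transferred operation $\Delta_n$ reduces to a sum of iterated cuts of bar tuples, evaluated under $\pi$ after insertion of intermediate homotopies $h$. The crucial combinatorial observation is that, when one cuts $\iota(\gamma)$ into $n$ pieces, the only non-zero contributions come from cuts that display $\gamma$ as a concatenation of $n$ Anick subchains; that is, $\Delta_n(\gamma)$ equals the sum $\sum \gamma_1\otimes\cdots\otimes\gamma_n$ indexed by decompositions of $\gamma$. Substituting into $\Omega_\infty$ then yields the stated differential $b$, while the sign $(-1)^{\binom{n+1}{2}+|s^{-1}\gamma_1|}$ is exactly the Koszul sign produced by pulling $n$ desuspensions across $\Delta_n$ and permuting them past the appropriate tensor factor.

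The main obstacle is the identification in step (ii): one must verify that the iterated deconcatenation coproduct, sandwiched between $\iota$, $h$ and $\pi$, produces no terms beyond the decompositions of $\gamma$, and that each admissible decomposition is counted exactly once. This depends on how the Morse matching interacts with splitting a chain into subchains and amounts to checking that each intermediate homotopy $h$ returns a critical bar tuple without introducing corrections. This is the heart of the argument and is precisely where the monomial hypothesis on $A$ is genuinely used.
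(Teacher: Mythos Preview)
Your overall strategy coincides with the paper's: algebraic discrete Morse theory furnishes a contraction $(\iota,\pi,h)$ of $BA$ onto $\Tor_A$, and the homotopy transfer theorem then produces the minimal $A_\infty$-coalgebra whose $\infty$-cobar construction is the desired model. However, two of your claims about step~(ii) are not correct as stated, and they hide the actual work.

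First, the transferred operation $\Delta_n$ is a sum over \emph{all} planar binary trees with $n$ leaves, not simply ``iterated cuts''. The paper's key reduction is an exchange rule: on attached bar terms one has $\Delta_2'\,h \equiv (h\otimes 1)\,\Delta_2'$ modulo $\Tor_A\otimes BA$. Combined with $h^2=0$ and $h\iota=0$, this forces $(h\otimes 1)\Delta_2' h=0$ on attached terms, so every tree containing a left branch below an internal edge dies and only the right comb survives. Without isolating the right comb you cannot carry out the uniqueness count you describe, nor obtain the sign.

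Second, your sign explanation is wrong. The factor $(-1)^{\binom{n+1}{2}+|s^{-1}\gamma_1|}$ is \emph{not} the Koszul sign from pulling desuspensions across $\Delta_n$. It arises from three separate sources that partially cancel: the tree sign $\vartheta(T)=\binom{n+1}{2}-1$ for the right comb; the signs produced by the homotopy $h$ itself, which contributes $(-1)^{r_i+1}$ each time it extracts an $r_i$-chain along the comb; and finally the desuspension sign $(-1)^{M}$ with $M=\sum_{i=1}^{n-1}(n-i)(r_i+1)$. The last of these cancels against the accumulated homotopy signs, leaving $\binom{n+1}{2}+r_1$. None of this is automatic Koszul bookkeeping.

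Finally, your description of the obstacle is slightly off: the intermediate homotopy $h$ applied to a tail does \emph{not} return a single critical bar tuple. It returns a genuine sum of non-critical terms, and the content of the argument is that after applying $\Delta_2'$ and projecting each left factor by $\pi$, exactly one summand survives and its left factor is the next chain $\gamma_j$ in the decomposition. Establishing this requires tracking how the Anick bar structure of $\gamma_j$ interlaces with that of $\gamma$ (the paper's regular/coregular bar counting), which is more delicate than ``$h$ introduces no corrections''.
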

\noindent
This recovers, in particular, the results in \cite{GreenZ} describing cup
products in $\Ext_A(\kk,\kk)$ for a monomial quiver algebra $A$ using a multiplicative 
basis of chains, and the results in~\cite{HeLu} describing the 
$A_\infty$-algebra structure of $\Ext_A$ for monomial algebras which 
are $p$-Koszul. 

\bigskip

The paper is organised as follows. In 
Section~\ref{sec1:rec} we recall the relevant definitions and 
constructions from homological and homotopical algebra to be used
throughout the paper. In particular, we recall the essentials from
\cite{Anick}, the central results of algebraic discrete Morse theory
presented in \cite{ADMT}, and the dual version of the homotopy transfer theorem 
for $A_\infty$-algebras from \cite{Markl}. 
In Section~\ref{sec2:homcomb} we use the results of \cite{ADMT} to 
produce a homotopy retract datum from the bar construction of $A$ to $\Tor_A(\kk,\kk)$
and therefore a minimal $A_\infty$-coalgebra structure on $\Tor_A(\kk,\kk)$, 
which we describe explicitly in Section~\ref{sec3:mainres} in terms of 
decompositions of Anick chains into concatenations of smaller chains, 
and we note that our results generalize directly to the case of quiver algebras 
defined by monomial relations. Finally, in Section~\ref{sec4:Apps}
we  outline how to exploit the results of Section~\ref{sec3:mainres} to 
compute invariants of algebras and models of algebras with a 
chosen Gr\"obner basis, and briefly explain how our model has been successfully used
to study the support variety theory of monomial algebras~\cite{Gorenstein}. 

\bigskip

We fix once and for all a field $\kk$. 
All unadorned $\hom$ and $\otimes$, which denote the usual bifunctors on 
graded vector spaces, will be taken with respect to $\kk$. We let $\kk s^{-1}$ be 
the graded vector space concentrated in degree $-1$, where it is one 
dimensional, and write $s^{-1}$ for its generator. If $V$ is a graded
vector space, we write $s^{-1}V$ for $\kk s^{-1}\otimes V$, and denote 
$s^{-1}\otimes v$ by $s^{-1}v$, and write $V^\vee$ for the graded dual of~$V$.
\bigskip

\subsection*{Acknowledgements} I am pleased to thank Vladimir
Dotsenko for the constant support during the preparation of these
notes, and in particular for useful remarks and corrections.
Shortly before releasing the first draft of this paper, we became 
aware that a formula for the $A_\infty$-structure on the Yoneda 
algebra of any monomial quiver algebra was found by Chuang and 
King around 2005; they managed to guess a formula for that 
structure and check the Stasheff identities, and  then used a 
twisting cochain argument to establish that their structure is in 
the correct isomorphism class. We thank Joe Chuang for providing 
a copy of their unpublished manuscript. Finally,
we thank an anonymous referee for very useful comments
and suggestions that significantly improved the
exposition of the paper. 
 
\section{Recollections}\label{sec1:rec}
\textcolor{comcol}{
We recall that a monomial algebra over $\kk$
is a quotient of a free algebra $TV$ on a
finite dimensional $\kk$-module $V$ by an 
ideal generated by finitely many monomials.
We will follow the conventions and  
definitions from~\cite{LV}, and we refer the
reader to it for the essentials on weight graded differential graded algebras. In particular,
we follow their convention regarding gradings: 
if a dga algebra has an additional
grading by weight, efer to it as the \emph{weight grading}, as opposed to the
\emph{homological grading}, to avoid any confusion. 
Hence, the grading of a
graded algebra with a zero differential is
homological: a weight grading is, for us, an
\emph{extra grading} coming from some additional
structure so that, for example, our monomial
algebras will be weight graded but concentrated 
in homological degree zero. 
}

\smallskip
As explained in the introduction, we will completely describe, for
a given monomial algebra $A$, a minimal model $B\longrightarrow A$.
Recall this is a quasi-isomorphism onto $A$ from a quasi-free dga algebra
$B$ (that is, a dga algebra whose underlying weight graded algebra is free) whose differential satisfies the Sullivan condition \cite{LV}*{\S B.6.8}; although this condition
is necessary to have a well-behaved model of 
$A$, it will not be central to our exposition,
and the fact that our model satisfies this 
condition will be immediate to check.

Although this gives us, a priori, information about $A$ in the homotopy
category of $\DA$, there is a rich feedback loop between homotopical and
homological algebra, already present in the original work of Quillen,
and successfully pursued in \cite{Hinich,Kell}, among others. Without
going into details, we will content ourselves with giving a few examples:
\begin{titemize}
\item A model of $A$, that is, its homotopy type, 
can be computed
entirely by homological and perturbative methods, 
starting with homological invariants of it.
\item From this one may compute the Hochschild homology and cohomology of $A$
and, in particular, obtain information about the derived category
of its representations, and the representations of its enveloping algebra.
\item In fact,
the homotopy type of the dg Lie algebra of 
derivations of a model determines the deformation theory of~$A$. 
\end{titemize}

\noindent
All results of this paper can be proved for quiver algebras with
monomial relations; for readability, we present all arguments in the
case of associative algebras (that is, for one-vertex quivers) and
then merely state the corresponding generalization. 

\subsection{Bar resolution and Tor}

Let $A$ be a weight graded $\kk$-algebra. 
Observe that if $\Omega_\infty C\longrightarrow A$ is a
minimal model of $A$, then the space of indecomposable elements $C$ of 
$\Omega_\infty C$ can be identified with $\Tor_A(\kk,\kk)$ and is, in fact, the Quillen 
homology of $\Omega_\infty C$: it will become apparent in what follows that
our choice of basis for $\Tor_A(\kk,\kk)$, that of Anick chains, will be central in
describing our choice of minimal model of $A$.

{{}} Write ${}_A\Mod$ 
and $\Mod_A$ for the respective categories of left and right $A$-modules. 
The bifunctor $-\otimes_A - : \Mod_A\times {\vphantom{m}}_A\Mod \longrightarrow
{}_\kk\Mod$ gives us, upon derivation, the classical bifunctor $\Tor_A(-,-):
\Mod_A\times {}_A\Mod\longrightarrow {}_\kk\Gmod$ 
\textcolor{comcol}{to the category of \emph{graded} $\kk$-modules}, defined as follows. 
For $M\in\Mod_A$ and $N\in {}_A\Mod$, let us pick respective projective 
resolutions $P\to M$ and $Q\to N$ in  $\Mod_A$ and ${}_A\Mod$. The  
diagram
$$
	P\otimes_A N \longleftarrow  
		P\otimes_A Q \longrightarrow 
			M\otimes_A Q
				$$
connects the above three complexes by natural quasi-isomorphisms, 
up to our choice of resolutions, 
and their homology is the graded $\kk$-module $\Tor_A(M,N)$. Let us remark
that $\Tor_A(M,N)$ is usually denoted by $\Tor^A(M,N)$ but that for typographical 
purposes we will instead write it $\Tor_A(M,N)$. When $A$ is connected or, more
generally, augmented, we will write $\Tor_A$ for $\Tor_A(\kk,\kk)$, where $\kk$
is made into an $A$-module via the augmentation $A\longrightarrow \kk$.

{{}} There is a particularly useful way we can construct such
bifunctor following the definition above. Concretely,
if $R\longrightarrow A$ is any projective resolution of the $A$-bimodule $A$,
then the homology of the complex $M\otimes_A R \otimes_A N$ is
$\Tor_A(M,N)$. The advantage of this is we need only choose one
resolution, namely that of $A$ as an $A$-bimodule, 
to obtain resolutions for every
left or right $A$-module, and we now fix this choice. 
Define $B(A,A,A)$, the \new{double-sided bar resolution of $A$},
to be the chain complex such that for each $n\in\mathbb N_0$, we have $B_n(A,A,A) = A\otimes \overline{A}^{\otimes n}\otimes A$,
the free $A$-bimodule with basis $\overline{A}^{\otimes n}$, \textcolor{comcol}{where $\overline{A}$ is the kernel of the augmentation $A\longrightarrow \kk$.} 

For each such integer, denote a generic bimodule basis element in degree $n$ by 
$[a_1\vv\cdots\vv a_n]$.
Its differential is then given by
\[  -a_1[a_2\vv \cdots\vv a_n] 
	+\sum_{i=1}^{n-1}  (-1)^{i-1}[{a_1}\vv\cdots\vv {a_i}a_{i+1}\vv\cdots
	\vv a_n]
	+ (-1)^{n-1} [{a_1}\vv\cdots\vv {a_{n-1}}]a_n
		\] 
and is extended $A$-bilinearly. In particular, if $n=0$ we have
$B_0(A,A,A) =A\otimes A$ and there is an augmentation
$B_0(A,A,A)\to A$ given by multiplication which renders the
augmented complex $B(A,A,A)\to A$ contractible both as a complex
of left and as a complex of right $A$-modules. From this it 
follows that if $M$ is right $A$-module and $N$ a left $A$-module,
the complex
$B(M,A,N) := M\otimes_A B(A,A,A)\otimes_A N$  computes
$\Tor_A(M,N)$.
 
{{}} From now on we assume that $A$ is connected, which makes
it naturally augmented, and endows $\kk$ with a trivial $A$-module
structure on both sides, 
in which elements of positive degree act by zero. From
the previous remarks it follows that the complex $B(\kk,A,A)$ 
is a resolution of the right $A$-module $\kk$ by free right 
$A$-modules, which we will denote by $B(A,A)$ and call the \new{bar resolution of $\;\kk$}, so that $\Tor_A(\kk,\kk)$ may be 
computed as the homology of the complex $B(\kk,A,\kk)$, which we simply denote
by $BA$ and call the \new{bar construction of $A$}. Concretely, 
we have for each $n\in\mathbb N_0$ a natural isomorphism 
$(BA)_n \to \overline{A}^{\otimes n}$, which we consider 
an identification, with differential given on basis elements 
$[a_1\vv\cdots\vv a_n]$ by
\[
  d[a_1\vv\cdots\vv a_n] = \sum_{i=1}^{n-1} (-1)^{i-1}
  	[{a_1}\vv\cdots
  		\vv {a_i}a_{i+1}
  			\vv \cdots \vv a_n]. \]

{{}} The complex $BA$ admits a diagonal $\Delta_2' : BA
\longrightarrow BA\otimes BA$ given by deconcatenation, 
that makes it into a non-unital dga coalgebra. Concretely, on 
basis elements  $[a_1\vv\cdots\vv a_n]$ of degree $n\in\mathbb N_0$
we have that 
\[\Delta_2'[a_1\vv\cdots\vv a_n] = 
	\sum_{i=1}^{n-1} [a_1\vv\cdots\vv a_i]\otimes
						[a_{i+1}\vv\cdots\vv a_n].\]
						
\subsection{Anick's resolution}

{{}} In his celebrated article \cite{Anick}, Anick constructs an $A$-free
resolution of the trivial module for any augmented algebra $A$ 
equipped with a Gr\"obner basis. This construction is generalized for
quiver algebras in \cite{AnickG}, where the authors
use the notion of chains for such algebras from \cite{GHZ}.
Since we will use the description of $\Tor_A$ by means
of Anick's resolution,
let us quickly recall his results.

Let us write $S$ for a set of generators of $A$, the
variables, and let $f : \kk\langle S\rangle \longrightarrow A$
be the quotient map by the ideal of relations of $A$, which is
a map of augmented $\kk$-algebras. We weight grade $A$ by the length of
a monomial, and give $S$ a total order. This induces on the monoid 
of monomials $M_S$ on $S$ a well ordering in such a way that $m < m'$ if $|m|<|m'|$ \textcolor{comcol}{(here $|m|$ is the length of a monomial)}, or if $m$ and $m'$ are of the same length but 
$m<m'$ in the lexicographical order, 
\textcolor{comcol}{also known as the dictionary order, induced from the total order of the letters in $S$: if $m =x_1\cdots x_t$
and $m' =y_1\cdots y_t$ are monomials of the same length, then $m<m'$ if $x_i<y_i$ for the first 
index $j\in [t]$ such that $x_j\neq y_j$.} Given monomials $u,v\in M_S$, say \new{$v$ is a divisor of $u$} if $u = u'vu''$ for 
monomials $u',u''$, to obtain a partial ordering $\subseteq$ on $M_S$. 
A subset $I$ of $M_S$ is an \new{order ideal of monomials} if it is 
a lower set for $\subseteq$. \textcolor{comcol}
{Note that the
reverse ordering associated to $\subseteq$
is sometimes used in the literature: for us,
$v\subseteq u$ means that $v$ is a divisor of
$u$.}
It is readily checked by induction that
the set $N = \{x\in M_S : f(x)\notin \langle f(y) : y<x\rangle\}$ is
an order ideal of monomials, and that $f(N)$ is a basis of $A$
as a $\kk$-module. 

{{}} From $N$ Anick extracts the basic building
blocks for his resolution, the \new{obstructions}. Concretely, let
$V$ consist of those $x\in M_S$ that are not in $N$, but
all $y\subsetneq x$ are in $N$. These are simply the maximal 
elements of the order ideal $N$, and thus form an anti-chain.
Its elements are the obstructions. From the definitions it 
follows that an element is in $M_S\smallsetminus N$ precisely when it 
contains as a divisor an obstruction. In case $A$ is monomial,
$N$ consists of those monomials that contain no monomial relation
as a divisor, and the obstructions are the minimal relations of $A$. 
Now set $V^{-1} = \kk$, $V^0 = \kk S$ and $V^1=\kk V$, to 
begin to construct a right $A$-free resolution
\[ \cdots \longrightarrow V^2\otimes A
	\stackrel{\delta_2}
	\longrightarrow V^1\otimes A
	\stackrel{\delta_1}\longrightarrow V^0\otimes A
	\stackrel{\delta_0}\longrightarrow V^{-1}\otimes A \longrightarrow 0 \]
of~$\kk$. For each $n\in\mathbb N$ we now obtain a vector space $V^n$ 
with a basis of monomials, called the $n$-chains, in the following way.
An \new{$n$-prechain} is a monomial $x_{i_1}\cdots x_{i_t}$ in $B$ for
which there exist strictly increasing sequences of integers 
$(a_1,\ldots,a_n)$ and $(b_1,\ldots,b_n)$ with $a_1=1$ and $b_n=t$ 
such that the sequences are interlaced, meaning that $a_{i+1}\leqslant 
b_i$ for each $i\in \{1,\ldots, n-1\}$, and such that for each $j
\in\{1,\ldots,n\}$, the monomial $x_{i_{a_j}}\cdots x_{i_{b_j}}$ is 
an obstruction.

{{}} In particular, the collection of $1$-prechains, which coincides with that of $1$-chains, is a basis for $V^1$. 
We say an $n$-prechain is an \new{$n$-chain} if the two previous sequences 
may be chosen so that $x_{i_1}\cdots x_{i_s}$ is not an $m$-prechain
for any $s<b_m$ and $m\in \{1,\ldots,n\}$. Plainly, a chain is
a prechain that satisfies a minimality condition regarding the 
overlappings between the obstructions that constitute it.
It is readily verified
that in this case these two sequences are uniquely determined,
there is a unique $s=b_{n-1}<t$ such that $x_{i_1} \cdots x_{i_s}$ is 
an $(n-1)$-chain and the tail $x_{i_s+1}\cdots x_{i_t}$ contains
no divisor that is an obstruction. This is the key observation
to construct a sequence of boundary maps $(\delta_n : V^n\otimes A \longrightarrow V^{n-1}\otimes A)_{n\geqslant 2}$ such that
\[\delta_n(x_{i_1} \cdots x_{i_t}) = 
x_{i_1} \cdots x_{i_s}\otimes x_{i_s+1}\cdots x_{i_t} + 
	\text{lower terms.}\]
If $A$ is monomial, there are no lower terms in the differential 
and this resolution is minimal, so that for each $n\in\mathbb N$, 
$\Tor_A^{n+1}$ is identified with the vector space $V^n$ with basis consisting of the
$n$-chains: this is the content of Lemma 3.3 in \cite{Anick}. 
Let us make the important remark that, in what follows, we adhere to 
such identification strictly: our main result depends critically on
using Anick chains to model $\Tor_A$.

Since it will be useful to illustrate
some of the rather technical constructions
that will follow, let us consider the 
algebra $J = T(x,y)/(x^2,y^2x-xy^2-xyx)$ and the
monomial 
algebra  $K = T(x,y,z)/(xy^2,y^2z)$. 
In the case of the algebra $J$ with
the order $y>x$, one can check that the specified
generator for its ideal of relations constitute
a Gröbner basis. In the case of $K$ this is
immediate for any order, 
since the relations are monomial
and there are no redundant relations. For $J$,
we obtain that for each $n\in\NN$, the set 
of $n$-chains is $\{x^{n+1},y^2x^n\}$, 
corresponding to $n$ overlappings of the relation
$x^2$ with itself, and of an overlapping of
$y^2x$ with $n-1$ copies of the relation $x^n$. 
In the case of $K$, we get finitely many Anick
chains: the $0$-chains are $\{x,y,z\}$, the
$1$-chains are the relations $\{xy^2,y^2z\}$,
and the $2$-chains are the overlappings 
$\{xy^2z,xy^3z\}$, and there are no other chains.

\subsection{Algebraic discrete Morse theory}\label{sec:ADMT}
 
{{}} Let $C$ be a non-negatively graded complex of $\kk$-modules. 
Fix a basis $X=\{X_t\}_{t\geqslant 0}$ of 
homogeneous elements of $C$, so that for each 
$t\in\NN_0$, the set $X_t$ is a basis of $C_t$. Given 
$c\in X$ we introduce the notation
\[ dc = \sum_{c'\in X} [c:c']c' \]
where $[c:c']\in \kk$. Let $G= G(C,X)$ be the directed weighted graph 
with vertices the set $X$ and with an edge $c\to c'$ if $c'$ 
appears in $dc$ with non-zero coefficient $[c:c']$ which 
is, in that case, the weight of $c\to c'$. A finite subset $M$ of 
edges of $G$ is a \new{Morse matching} if it satisfies the 
following \new{Morse conditions}:
\begin{enumerate}[label = \color{newcol}{\textbf{M\arabic*.}}]
\setlength{\itemsep}{0pt}
  \setlength{\parskip}{0pt}	
	\item Each vertex of $G$ is in at most one edge of $M$.
	\item The weights of edges of $M$ are invertible.
	\item The graph $G_M$ obtained by inverting the edges of $M$ in $G$  
	has no directed cycles. 
\end{enumerate}
If $c'\to c$ is a edge in $G_M$ with
$c\to c'\in M$, we set its weight to be $-[c:c']^{-1}$. In our situation the coefficients $[c:c']$ will be either $1$ or $-1$, which 
means \textbf{M2} is always satisfied. We write $X^M$ for 
the collection of vertices not appearing in $M$, which we call \new{critical}. 
Write $P(c,c')$ for the set of paths in $G_M$ from $c$ to $c'$, and assign a 
path the product of the weights of the edges it contains. Finally, write
$\Gamma(c,c')$ for the sum of all the weights of paths from $c$ to $c'$ in 
$G_M$. 

{{}} We define the \new{Morse complex of $C$ with respect to $M$}, 
which we denote by $C^M$, as the complex with  basis
the critical vertices $X^M$ and with differential
given, on basis elements, by
\[ dc = 
	\sum_{c'\in X^M_{t-1}}\Gamma(c,c') c' \]
whenever $c\in X_t$. The result of main interest to us in \cite{ADMT} 
is the following theorem, which shows how to produce a
homotopy retract datum from $C$ to $C^M$ given a Morse matching 
$M$ on $C$ relative to a basis $X$. 

\begin{theorem}\label{thm:mainadmt}
The complex $C^M$ is homotopy 
equivalent to $C$. More precisely, there
are maps $f : C \longrightarrow C^M$ and 
$g : C^M\longrightarrow C$
given on basis elements by
\[ f(c) = \sum_{c'\in X^M_t} 
\Gamma(c,c')c', \quad
 g(c) = \sum_{c'\in X_t} \Gamma(c,c')c' 
\]
for $c\in X_t$, respectively $c\in 
X_t^M$, which are inverse homotopy 
equivalences. In 
fact, $fg=1$ and $gf-1 = dh+hd$ 
where for a basis element 
$c\in X_t$,
\[ h(c) = \sum_{c'\in X_{t+1}} 
\Gamma(c,c')c'.\]
 \end{theorem}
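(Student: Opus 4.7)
The plan is to verify four statements in turn: first, that each path sum $\Gamma(c,c')$ is a well-defined element of $\kk$; second, that $f$ and $g$ are chain maps; third, that $fg = 1_{C^M}$; and fourth, the homotopy identity $gf - 1_C = dh + hd$. The combinatorial heart of the argument is the observation that every edge in $G_M$ either decreases the degree by one (if it comes from $G\setminus M$) or raises it by one (if it is the reversal of a matched edge), and that condition \textbf{M3} forbids directed cycles. These two facts imply that the set of paths in $G_M$ between any two fixed vertices is finite, so the $\Gamma(c,c')$ converge, and the non-negative grading ensures that all sums we will manipulate are supported in finitely many degrees at once.

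To verify the chain map property of $g$, I would fix a critical $c\in X_t^M$ and compare, coefficient by coefficient in $X_{t-1}$, the basis expansions of $d_C g(c)$ and of $g \, d_{C^M}(c)$. Each such coefficient is a weighted count of paths in $G_M$ from $c$ to a chosen vertex of $X_{t-1}$; the two sides are matched by analysing paths according to the nature of their last non-critical intermediate vertex, whose unique matched partner supplies the necessary cancellation through the weight prescription $w(c'\to c) = -[c:c']^{-1}$ on reversed edges. The same kind of bookkeeping gives the chain map identity for $f$. The identity $fg = 1_{C^M}$ is the easier companion: if $c$ is critical, then by \textbf{M3} the only path from $c$ to itself that stays within $X^M$ is trivial, and the remaining contributions pair up to cancel via the same local argument.

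The main obstacle, as is typical for results of this flavour, will be the homotopy identity $gf - 1_C = dh + hd$. A direct approach splits the path sums appearing on each side according to the position of the first (respectively last) reversed matched edge and constructs a bijection between the remaining paths, tracking signs carefully; this is the strategy actually followed in \cite{ADMT}. A structurally cleaner alternative, which I would favour, is to interpret the Morse matching as a perturbation of the identity deformation retract of $C$ and then invoke the homological perturbation lemma: each matched pair allows one to Gaussian-eliminate a pair of cancelling generators, and iterating this process (which terminates degree-by-degree thanks to \textbf{M3}) reproduces exactly the formulas for $f$, $g$ and $h$ given in the statement, with the geometric series encoding the sums over paths. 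Either route reduces the verification of the homotopy identity to a single algebraic manipulation, absorbing all of the sign bookkeeping into the weight convention on reversed edges.
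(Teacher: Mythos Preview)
The paper does not prove this theorem at all: it is stated in the ``Recollections'' section as a result quoted from \cite{ADMT}, with no proof environment following the statement. There is therefore nothing in the paper to compare your proposal against.

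Your outline is a reasonable sketch of how the proof actually goes in the literature, and you yourself note that the direct path-counting argument ``is the strategy actually followed in \cite{ADMT}''. The alternative route you mention, via iterated Gaussian elimination and the perturbation lemma, is also standard and appears, for instance, in Sk\"oldberg's treatment \cite{Skol}. Either would be acceptable if a self-contained proof were required, but for the purposes of this paper the theorem is simply cited, so no proof is expected here.
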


{{}} Note that since for any two basis elements we have defined the
coefficient $\Gamma(c,c')$ as a sum through paths, it is
important that $M$ is finite for the theorem above to hold. We 
can, however, consider matchings $M$ of the complex $C$ if
$C$ is the colimit of a finite sequence of finite subcomplexes
$\{F^pC\}$ that is compatible with the matching, in the 
sense that $(F^pC)^M$ is a filtration by subcomplexes of
$C^M$. This last condition means $\Gamma(c,c')$ is well defined
and the last theorem extends in this situation. In particular,
we will consider the situation of $\NN$-multigraded complexes
such that each homogeneous subcomplex is finite, and in this case
the filtration by weight of tuples fulfills the condition above.

{{}} Let us note that in the homotopy $h$ we can only have a path
from an element in degree $t$ to one in degree $t+1$ if it is
given by a sequence of edges $e_0'e_1e_1'\cdots e_je_j'$
where $e_i'$ is an inverted edge of the matching and $e_i$
is a direct edge. Indeed, the first Morse condition forbids
a concatenation of inverted edges, which means we also
cannot have two consecutive non-inverted edges.
Finally, let us observe that if $c\in C^M$ is a cycle
then $g(c) = c$, that the last observation means that $h^2=0$,
and that $hg = 0$ and $fh=0$. Thus $(f,g,h)$ is a homotopy datum
that satisfies the side conditions, as defined in \ref{diag:HRD}.

\subsection{Anick's resolution via Morse theory}\label{sec:AADMT}
  
{{}} Let $A$ be a weight graded $\kk$-algebra presented by
generators $\{x_1,\ldots,x_n\}$ and ideal of 
relations~$I$, and assume that $\{f_1,\ldots,f_m\}$ 
is a reduced Gröbner basis with respect to a
fixed monomial order~$<$. Following~\cite{ADMT}, we show how
to obtain the Anick resolution of $A$ as the Morse 
complex of an acyclic matching on the normalized
bar resolution $B(A,A)$ of $\kk$, which we now denote 
more simply by $B$.  

{{}} Let $\text{in}(I)$ denote the ideal of leading terms of
elements of $I$, which is generated as an ideal
by the leading terms of the elements in 
$\{f_1,\ldots,f_m\}$. A monomial is \new{normal}
if it is not divisible by a leading term of an 
element in $\{f_1,\ldots,f_m\}$, and we write
$\mathsf{SM}$ for the collection of such monomials.
A monomial is \new{reducible} if it is not normal,
and we say that $uv=0$ \emph{minimally} if for every prefix 
$v'$ of $v$, the monomial $uv'$ is normal. 
The set $\mathsf{SM}$ is a basis of $A$ as a 
$\kk$-module. In particular, given two normal 
monomials $u$ and $v$ we can write
$uv = \sum_{w\in \mathsf{SM}} \lambda_w w$
where $|w|\leqslant |uv|$ for any $w$ 
with $\lambda_w\neq 0$. Observe that if $A$ 
is a monomial algebra, that is, if the 
relations of $A$ are given by monomials, the normal 
monomials are those that do not contain as a 
subword any monomial relation, and reducible 
monomials are zero in $A$.

{{}} We now define a Morse matching on $B$ by
induction. Recall that we denote a 
generic basis element of the bar resolution by 
$[a_1\vv \cdots \vv a_n]$.  Define $M_1$ to 
be the collection of edges of the form
$[x_i \vv w_1\vv w_2\vv \cdots 
\vv w_t ] 
 \longrightarrow
 	[ x_iw_1 \vv w_2\vv \cdots \vv 
w_t ]$.
The critical vertices $B^{(1)}$ with respect to 
$M_1$ are the variables $[x_1],\ldots,[x_n]$ in 
degree $1$, and those words $[x_1\vv w_1 \vv 
\cdots\vv w_t]$ of normal monomials such that 
$x_1w_1$ can be reduced. We proceed inductively to 
define $M_j$ for $j>1$. Having defined $M_{j-1}$, 
let $B^{(j-1)}$ be the set of critical vertices 
with respect to $M_1\cup \cdots \cup M_{j-1}$, and
define $E_j$ to be the set of edges that 
connect vertices of $B^{(j-1)}$. Suppose that 
\[
[x_{i_1} 
	\vv w_2
	\vv\cdots
	\vv w_{j-1}
	\vv u_1
	\vv u_2
	\vv w_{j+1}
	\vv \cdots
	\vv w_t ] 
\to
[x_{i_1} 
	\vv w_2
	\vv \cdots
	\vv w_{j-1}
	\vv w_j
	\vv w_{j+1}
	\vv \cdots
	\vv w_t]
\]
is an edge in $E_j$. In particular, $w_j 
= u_1u_2$. We say $e$ satisfies the \new{matching 
condition} if 
\begin{enumerate}[label = \color{newcol}{\textbf{B\arabic*.}}]
\setlength{\itemsep}{0pt}
  \setlength{\parskip}{0pt}
\item the monomial $u_1$ is a prefix of $w_j$,
\item the source of $e$ is in 
$B^{(j-1)}$ and,
\item for each prefix $v_1$ of $u_1$ and 
each $v_2$
such that $v_1v_2 = w_j$, the vertex 
\[
	[ 
	x_{i_1} \vv 
	w_2     \vv
	\cdots  \vv 
	w_{j-1} \vv
	v_1     \vv
	v_2     \vv 
	w_{j+1} \vv 
	\cdots  \vv 
	w_t ]
	 				\] 
is not in $B^{(j-1)}$. 
\end{enumerate}
We let $M_j$ be the collection of edges in $E_j$
that multiply monomials at the $j$th bar and
satisfy the matching condition. Then the set
$B^{(j)}$ is given by the variables $[x_i]$ in
degree $1$, the elements $[x_{i_1}\vv w ]$
such that $x_{i_1}w$ is a minimal monomial 
generating the ideal of leading terms of 
$I$, and the elements of the form
 $[ x_i \vv w_2\vv w_3\vv \cdots\vv w_t]$
such that for each prefix $u$ of $w_j$ 
the vertex $ 
  	[
  		x_i 	\vv
  		w_2		\vv
  		\cdots	\vv 
  		w_{j-1} \vv
  	    u 		\vv 
  	    w_{j+1}	\vv 
  	    \cdots 	\vv 
  	    w_t]
  	    	 $
is not in $B^{(j-1)}$ and the term $w_jw_{j+1}$ is
reducible. We set $M = \bigcup_{j\geqslant 1} M_j$
to be the desired Morse matching,
and let $B^M$ be the collection of critical vertices
with respect to $M$. 

\begin{lemma}\label{lemma:edges} 
\textcolor{comcol}{Assume that $A$ is a weight graded monomial $\kk$-algebra.} Let $j\in\NN$. 
The elements of $M_j$ consist of those edges
$ [x_i \vv u_1\vv\cdots\vv u_{j-1}\vv u_j\vv \cdots]\to
[x_i \vv u_1\vv\cdots\vv u_{j-1} u_j\vv\cdots ] $
such that $x_iu_1=u_1u_2 = \cdots = u_{j-2}u_{j-1}=0$ 
minimally and $ u_{j-1}u_j\neq 0$. Moreover, the collection 
$M$ is a Morse matching.
\end{lemma}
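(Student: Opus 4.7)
I would prove the two assertions in sequence: first establish the stated description of $M_j$ by induction on $j \geqslant 1$, then verify the three Morse conditions. Throughout I use that in the monomial setting a product is zero in $A$ iff it contains a relation, so an edge of $G$ merging two adjacent entries exists precisely when their concatenation is normal. The base case $j=1$ is immediate from the definition of $M_1$, noting that the leading entry of any bar-word is a single variable, so such a word cannot be the target of an $M_1$-edge.

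For the inductive step, assume the characterisation of $M_1,\ldots,M_{j-1}$. The crux is to describe $B^{(j-1)}$. Using the inductive hypothesis together with the uniqueness of shortest reducing prefixes, one shows that $v = [x_i\vv u_1\vv\cdots\vv u_t]$ (writing $u_0 := x_i$) is the source of some $M_k$-edge with $k < j$ iff $u_{\ell-1}u_\ell = 0$ minimally for $\ell \leqslant k-1$ and $u_{k-1}u_k \neq 0$, and is the target of some $M_k$-edge iff $u_{\ell-1}u_\ell = 0$ minimally for $\ell \leqslant k-2$ and $u_{k-2}u_{k-1} = 0$ but not minimally (the preimage splitting $u_{k-1}$ at its shortest reducing prefix). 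Combining the negations of both conditions over all $k < j$ yields $v \in B^{(j-1)}$ iff $u_{\ell-1}u_\ell = 0$ minimally for $\ell \leqslant j-2$ and $u_{j-2}u_{j-1} = 0$ (possibly not minimally). Now an edge $e$ merging $u_{j-1}$ with $u_j$ belongs to $M_j$ iff its source is in $B^{(j-1)}$, the edge exists in $G$ (so $u_{j-1}u_j \neq 0$), and condition \textbf{B3} holds; the latter, via the description of $B^{(j-1)}$, amounts to $u_{j-2}v_1 \neq 0$ for every nonempty proper prefix $v_1$ of $u_{j-1}$, which combined with $u_{j-2}u_{j-1} = 0$ is exactly $u_{j-2}u_{j-1} = 0$ minimally. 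This recovers the stated form of $M_j$.

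For the Morse matching property, \textbf{M2} is clear since the weights are $\pm 1$, and \textbf{M1} follows from the above: the matching level of $v$ is uniquely determined as the first position at which its minimality chain breaks (either by a nonzero product or by a non-minimally-zero one), so a single vertex cannot belong both to the source and target sides of two different $M_k$'s. The main obstacle is \textbf{M3}, the acyclicity of $G_M$. Here I would work within finite subcomplexes using the $\NN$-multigrading by weight, and rule out cycles by a suitable lexicographic invariant on vertices---for instance the tuple of partial weights $(|u_1|, |u_1|+|u_2|,\ldots)$ truncated at the matching level---which strictly decreases along every inverted matching edge (the shortest-reducing-prefix selection forced by \textbf{B3} providing the strict drop) while never increasing along non-inverted edges of $G_M$. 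The delicate combinatorics of establishing this monotonicity across all splittings and non-matching merges is the hardest part of the argument.
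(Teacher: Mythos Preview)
The paper does not prove this lemma at all: its entire proof is the sentence ``This is a particular case of \cite{ADMT}*{Lemma 4.2}.'' Your proposal, by contrast, sketches a self-contained argument. The inductive description of $B^{(j-1)}$ and the extraction of the shape of $M_j$ from conditions \textbf{B1}--\textbf{B3} are correct; your key observation---that for any bar word the first position at which the chain of minimal vanishings breaks determines uniquely whether the word is a source (if the product there is nonzero) or a target (if it is zero non-minimally), and at which level---is precisely what makes \textbf{M1} transparent in the monomial case and is the right organising principle for the induction. What you have written is essentially a reconstruction, specialised and simplified to the monomial situation, of the argument that \cite{ADMT} carries out in general.

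The one place your sketch remains genuinely incomplete is \textbf{M3}. A lexicographic invariant of the sort you propose can be made to work, but the assertion that it ``never increases along non-inverted edges of $G_M$'' is the crux and is not automatic: non-matching faces of the bar differential may merge entries at positions \emph{below} the current matching level, and one must check that such a merge followed by an inverted edge cannot return the invariant to its starting value. You flag this honestly as the hardest part; the paper simply avoids it by deferring to \cite{ADMT}, where acyclicity is handled once and for all in the general Gr\"obner setting. If you want a fully self-contained proof you should either pin down the monotonicity carefully or, like the paper, cite that reference at this step.
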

\begin{proof}
This is a particular case of \cite{ADMT}*{Lemma 4.2}.
\end{proof}

{{}} We now describe the critical vertices $B^M$. Let 
$m_{1},\ldots,m_{l}$ be minimal monomial
generators of the ideal of leading monomials of
$I$, such that for each $j\in\{1,\ldots,l\}$ we 
have $m_j = u_j v_j u_{j+1}$ where $u_1$ is a 
variable. We call the term
$ [
	u_1 	\vv 
	v_1u_2 	\vv 
	v_2u_3	\vv 
	\cdots 	\vv 
	v_{l}u_{l+1} 
	]$
\new{fully attached} if for all $j\in\{1,\ldots,l-1\}$ and each prefix $u$ of 
$v_{j+1} u_{j+2}$ the monomial $v_j u_{j+1}u$ is normal. We denote by $B_j$ 
the set of fully attached terms of degree $j\geqslant 2$ and let $B_1$ consist 
of the variables. We refer the reader to \cite{ADMT} for the
proof of the following lemma, valid for any
weight graded $\kk$-algebra with a Gröbner basis,
as in the beginning
of this section.

\begin{lemma} The fully attached tuples are exactly 
the critical vertices, and the complex $C^M$ is
the Anick resolution of $A$. In case $A$ is monomial,
the critical vertices are the variables $[x_1],\ldots,
[x_n]$ along with those terms 
$ [ x_i\vv u_1\vv\cdots \vv u_r ] $
where if we set $x_i = u_0$, we have that $u_ju_{j+1}=0$ minimally
for $j\in \{ 0,\ldots,r-1\}$.\qed
\end{lemma}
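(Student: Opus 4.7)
The plan is to verify the three assertions of the lemma in sequence, leaning on Lemma~\ref{lemma:edges} for the monomial case and on the main results of \cite{ADMT} for the general statement. Since the claim is essentially a repackaging of the construction given in \S\ref{sec:AADMT}, the argument is structural rather than computational: one has to unpack the inductive definition of $M = \bigcup_j M_j$ and match the combinatorics of its critical vertices with the combinatorics of Anick chains.

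First I would characterise $B^{M}$ by induction on the bar-length position $j$. By the inductive definition of $M_j$, a tuple $[x_i\vv w_2\vv\cdots\vv w_t]\in B^{(j-1)}$ survives into $B^{(j)}$ precisely when it is neither the source nor the target of an edge satisfying the matching condition \textbf{B1}--\textbf{B3} at position $j$. Unpacking this, survival means that the only factorisation $w_j = u_1u_2$ compatible with \textbf{B1}--\textbf{B3} is obstructed by the tail not being in $B^{(j-1)}$; equivalently, every initial factorisation of $w_jw_{j+1}$ must be reducible before the split can be made, which is exactly the fully-attached condition applied to the $(j-1)$-th and $j$-th entries. Iterating over $j$ yields the identification $B^M$ with the set of fully attached tuples.

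Next, for the identification $C^M\cong V^\bullet\otimes A$ as the Anick resolution, I would invoke Theorem~\ref{thm:mainadmt} to know that $C^M$ is a free resolution of $\kk$ whose basis is $B^M$. The index-by-index bijection between fully attached tuples of length $r+1$ and Anick $r$-chains is immediate from Anick's construction: the data $(u_1,v_1u_2,\ldots,v_lu_{l+1})$ encodes exactly the interlaced sequences $(a_i)$, $(b_i)$ selecting obstructions $m_j=u_jv_ju_{j+1}$, and the ``fully attached'' requirement is the minimality condition in Anick's definition of chain. The differential $\delta_n$ on $C^M$ is then computed from the sums $\Gamma(c,c')$ of weighted paths in the inverted graph $G_M$; one checks that the unique direct edge emanating from a fully attached $[u_1\vv\cdots\vv v_l u_{l+1}]$ without subsequent inverted edges gives the leading term $u_1\cdots v_{l-1}u_l \otimes v_l u_{l+1}$, while paths using inverted edges contribute the ``lower terms'' of Anick's formula.

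In the monomial case, Lemma~\ref{lemma:edges} collapses the inductive description because $w\mapsto 0$ whenever $w$ is reducible: being normal is simply being nonzero in $A$. Hence the fully attached condition at position $j$ reduces to demanding that $u_{j-1}u_j = 0$ while every shorter prefix product is nonzero, i.e.\ $u_{j-1}u_j=0$ minimally. This gives the final clean description of $B^M$ and, via Theorem~\ref{thm:mainadmt}, shows that the associated Morse complex is minimal (the one inverted edge of $M$ from $[u_1\vv u_2]$ to $[u_1u_2]$ has zero weight in $A$), so that $\delta_n$ has no lower terms. The main obstacle in a genuinely detailed write-up is the bookkeeping in the second step: tracking, along zig-zag paths in $G_M$, the cancellations between direct and inverted edges to recover Anick's boundary with its signs and lower-order corrections --- this is precisely the verification carried out in \cite{ADMT}*{Theorem 4.1}.
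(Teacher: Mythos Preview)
The paper does not actually prove this lemma: it states explicitly ``We refer the reader to \cite{ADMT} for the proof of the following lemma'' and closes the statement with \qed. Your sketch is therefore not competing against a proof in the paper but rather filling in the outline of the argument from \cite{ADMT}, and in that sense it is on the right track and follows the intended approach.

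That said, one parenthetical in your final paragraph is wrong and should be removed or rewritten. You claim minimality in the monomial case follows because ``the one inverted edge of $M$ from $[u_1\vv u_2]$ to $[u_1u_2]$ has zero weight in $A$''. Edges in a Morse matching must have \emph{invertible} weight by condition \textbf{M2}; an edge with zero weight is not an edge at all. Minimality of the Anick resolution in the monomial case is not about matching edges having weight zero, but about the induced differential on $C^M$: the paths in $G_M$ computing $\Gamma(c,c')$ between critical vertices involve direct edges that multiply adjacent bar entries, and in a monomial algebra any reducible product is already zero, so no lower terms survive. Equivalently, after applying $-\otimes_A\kk$ the leading term $c'\otimes(\text{tail})$ dies and there is nothing else. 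Fix this and the sketch is an adequate expansion of what the paper leaves to \cite{ADMT}.
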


{{}} \textcolor{comcol}{
Let us briefly explain how one can go back
and forth from an Anick chain $\gamma$ to a
cycle in $\Tor_A$. If $\gamma$ is a $1$-chain 
then it is a monomial relation $x_1\cdots x_t$,
and the corresponding bar cycle (and critical
vertex) is $[x_1\vv x_2\cdots x_t]$. Now
suppose that $\gamma$ is an $n$-chain. Then
there exist a unique $(n-1)$-chain $\gamma'$ 
and a unique monomial $t$ such that $\gamma= \gamma' t$, and the bar term corresponding to
$\gamma$ is $[x_1\vv u_1\vv \cdots \vv u_l \vv
t ]$ where everything before $t$ is the bar
structure corresponding to $\gamma'$. In this
way, for example, if $\gamma$ is a $2$-chain
corresponding to an overlap of a relation $x_1\cdots x_t$ with another relation $x_j\cdots
x_t x_{t+1} \cdots x_s$, then the bar structure
of the corresponding critical vertex is
$[x_1\vv x_2\cdots x_t\vv x_{t+1}\cdots x_s]$. 
To illustrate, in the case of $J$,
for each $n\in \mathbb N$, the bar element
corresponding to $x^{n+1}$ is $[x\vv\cdots \vv x]$, where there are exactly $n$ bars, and the
bar element corresponding to $y^2x^n$ is
$[y\vv yx\vv x\vv \cdots \vv x]$ where, again,
we have $n$ bars. In the case of the algebra
$K$, the
 Anick chains correspond to the bar
elements }
\[[x],\,[y],\,[z],\,[x\vv y^2],\, [y\vv yz],\,[x\vv y^2\vv z] \text{ and } [x\vv y^2\vv yz].\] 
\subsection{Homotopy transfer theorem and \texorpdfstring{$A_\infty$}{Ainfty}-coalgebras}

{{}}\label{def:Stasheff} Recall that an $A_\infty$-coalgebra is a graded $\kk$-module 
$V$ along with sequence of locally finite maps $(\Delta_n : V
\longrightarrow V^{\otimes n})_{n\in\mathbb N}$, where for each 
$n\in\mathbb N$ we have $|\Delta_n| = n-2$, that satisfy the 
following \new{Stasheff identities}
\[\text{SI}(n):\quad
	 \sum_{r+s+t=n} (-1)^{r+st} 
	 	(1^r\otimes \Delta_s \otimes 1^t)\Delta_u = 0. \]
That such sequence of maps be locally finite means that for
each element $v\in V$ the set $\{ \Delta_n(v) : n\in\mathbb N_0\}$ 
contains finitely many nonzero terms. We write $(V,\Delta)$
for an $A_\infty$-coalgebra, and we call it \emph{minimal} whenever $\Delta_1$
vanishes. Observe that every graded vector space, every complex, 
and every dga coalgebra is, in an obvious way, an $A_\infty$-coalgebra. \textcolor{comcol}{Remark our $A_\infty$-coalgebras are non-unital and positively
graded. We warn the
reader that conventions for the signs appearing
in the Stasheff identities above vary in the
literature. As explained in~\cite{Markl}*{Section 2}, one can go from this convention to that
of J. Stasheff by multiplying $\Delta_n$ by the
sign $(-1)^{\binom{n}{2}}$.}

{{}} We can associate to every $A_\infty$-coalgebra $(V,\Delta)$ 
a dga algebra $(\Omega_\infty V,b)$, its \new{$\infty$-cobar construction}, as follows.
The underlying algebra to $\Omega_\infty V$ is the free associative algebra
on the suspension $s^{-1}V$. Define the family of maps 
$(b_n : s^{-1} V
\longrightarrow (s^{-1}V)^{\otimes n})_{n\in\mathbb N}$ by conjugation with the isomorphisms $s : s^{-1}V \longrightarrow V$ and 
$(s^{-1})^{\otimes n} : V^{\otimes n} \longrightarrow (s^{-1}V)^{\otimes n}$. This 
sequence gives a map $s^{-1}V \longrightarrow \Omega_\infty V$, and we then 
have a unique derivation $b : \Omega_\infty V\longrightarrow \Omega_\infty V$ 
that restricts to $s^{-1}V \longrightarrow \Omega_\infty V$ on $s^{-1}V$. 
A straightforward computation shows that $b^2=0$ is equivalent to the Stasheff
identities, so we have a dga algebra. Observe that since $V$ is positively
graded, $\Omega_\infty V$ is non-unital and non-negatively graded. 
If $V$ has a weight grading, as it happens for $\Tor_A$ whenever
$A$ is a weight graded algebra, $\Omega_\infty V$ inherits a
weight-grading from $V$. 

{{}} The $\infty$-cobar construction allows us to define
the category of $A_\infty$-coalgebras, which we denote by
$\DCSH$, quite painlessly: its objects are the 
$A_\infty$-coalgebras and the hom-sets are given by 
\[ \hom_{\DCSH}(C,C')= \hom_{\DA}(\Omega_\infty C,\Omega_\infty C').\] Plainly, $\DCSH$ is the 
full subcategory of $\DA$ consisting of dga algebras that are 
quasi-free, that is, those which are free as graded algebras
if we forget about their differential. Since in the category $\DA$ we have defined the notion of
homotopy between maps and weak equivalences, the quasi-isomorphisms, these notions are available to us in $\DCSH$ by creating them with the functor $\Omega_\infty$; see~\cite{Has}. Observe, moreover, 
that if $F :  V\rightsquigarrow  W$ 
is a map between $A_\infty$-coalgebras,  
it is determined uniquely by a sequence of maps $(f_n : V\longrightarrow 
W^{\otimes n})_{n\in\NN}$ satisfying appropriate commutativity conditions with
the coproducts of $V$ and $W$. In view of this, we will identify such a map
$F$ with the sequence $f=(f_n)_{n\in \NN}$, and write $\Omega_\infty(f)$ for 
$F$. Abusing notation a little, for a second sequence $(g_n)_{n\in\NN}$,
we write $fg$ for the map corresponding to the composition $\Omega_\infty(f)\Omega_\infty(g)$.

{{}} Let $C$ be a dga coalgebra, and assume that $V$ is a complex of 
$\kk$-modules which is a deformation retract of $C$, that is, 
there is a homotopy retract datum
\[\label{diag:HRD}
\begin{tikzcd}
V 
	\arrow[shift left = .5 em]{r}{i} & 
C 
	\arrow[shift left = .5 em]{l}{p}
	\arrow[out=30, in=330,distance=3em, loop]{}{h}
\end{tikzcd},\quad 1-ip = dh+hd,\quad pi =1,
\] 
which we denote by $(i,p,h)$. We assume that such datum 
satisfies the side conditions, that is, all three maps
$h^2$, $hi$ and $ph$ are zero, in which case we call it
a \new{contraction}. The following result, which
is a simplified form of Theorem 5 in \cite{Markl}, shows how
to transfer on $V$ an $A_\infty$-coalgebra structure from
the dga coalgebra structure of $C$ and, further, how to produce 
from a homotopy retract datum another homotopy datum of
$A_\infty$-coalgebras.

\begin{theorem}[Homotopy Transfer Theorem]\label{thm:HTT} 
Let $(C,\Delta_2')$ be a dga coalgebra and consider a homotopy retract as above. There exists an $A_\infty$-coalgebra structure
on $V$ and a homotopy retract datum
\[
\begin{tikzcd}
 \Omega_\infty V 
	\arrow[shift left = .5 em]{r}{j} & 
 \Omega_\infty C 
	\arrow[shift left = .5 em]{l}{q}
	\arrow[out=30, in=330,distance=3em, loop]{}{k}
\end{tikzcd},\quad 1-jq  = bk+kb,\quad qj =1.
\]
The $A_\infty$-coalgebra structure on $V$ is given by $\Delta_1=d_V$ and, for $n\in \NN_{\geqslant 2}$, by $\Delta_n = p^{\otimes n}\Delta_n' i$, where for $n\in\NN_{\geqslant 3}$ the arrows $\Delta_n' : C\longrightarrow C^{\otimes n}$ are defined by
\[ \Delta_n' = \sum_{
	\substack{s+t=n \\ s,t>0}} (-1)^{s(t+1)} 
	(\Delta_s' h\otimes\Delta_t' h)\Delta_2', 
	\]
with the convention that $\Delta_1'h=1$.  \qed
\end{theorem}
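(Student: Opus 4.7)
The plan is to reduce the proof to the Homological Perturbation Lemma (HPL) applied on the level of cobar constructions, where everything becomes a statement about dga algebras and genuine derivations, so there is no need to verify the Stasheff identities on $V$ by hand. Since $(C,\Delta_2')$ is a dga coalgebra, its $\infty$-cobar construction is the usual cobar construction $(\Omega_\infty C,b_C)$ with $b_C$ the unique derivation extending $s^{-1}d_C + s^{-1}\Delta_2's$. Similarly, if we forget for the moment that we want higher operations on $V$, the tensor algebra $\Omega_\infty V$ carries only the differential $b_V^{(0)}$ induced by $d_V$.

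First, I would promote the retract $(i,p,h)$ to a retract between the underlying graded algebras. Set $\tilde\imath = \Omega_\infty(s^{-1}is)$ and $\tilde p = \Omega_\infty(s^{-1}ps)$, and let $\tilde h$ be the tensor extension of $s^{-1}hs$ defined on a word $x_1\otimes\cdots\otimes x_n$ by the standard telescoping formula
\[
  \tilde h(x_1\otimes\cdots\otimes x_n) = \sum_{j=1}^{n}\pm\,\tilde\imath\tilde p(x_1)\otimes\cdots\otimes\tilde\imath\tilde p(x_{j-1})\otimes\tilde h(x_j)\otimes x_{j+1}\otimes\cdots\otimes x_n,
\]
with Koszul signs. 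The side conditions $h^2=hi=ph=0$ imply $\tilde h^2=\tilde h\tilde\imath=\tilde p\tilde h=0$, so $(\tilde\imath,\tilde p,\tilde h)$ is a contraction from $(\Omega_\infty C,b_V^{(0)}+\text{lift of }d_C-d)$ onto $(\Omega_\infty V,b_V^{(0)})$.

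Next, view the coproduct contribution of $b_C$, namely $\tau := b_C - (\text{lift of }d_C)$, as a perturbation. The HPL then produces a new contraction onto $\Omega_\infty V$ with perturbed differential $b = b_V^{(0)} + \tilde p\,\tau(1-\tilde h\tau)^{-1}\tilde\imath$ and perturbed structure maps $j = \tilde\imath + \tilde h\tau(1-\tilde h\tau)^{-1}\tilde\imath$, $q = \tilde p + \tilde p\tau(1-\tilde h\tau)^{-1}\tilde h$, and $k = \tilde h + \tilde h\tau(1-\tilde h\tau)^{-1}\tilde h$. That the geometric series converges on each element follows from the assumption that $V$ is positively graded and $\tau$ strictly increases tensor length. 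Because $\tilde h$ is a derivation-like map that preserves the tensor filtration, so is $b$; hence $b$ is a coderivation of the tensor algebra and encodes a bona fide $A_\infty$-coalgebra structure on $V$. Unfolding the geometric series $(1-\tilde h\tau)^{-1} = \sum_{k\geqslant 0}(\tilde h\tau)^k$ and restricting to the component that lands in $(s^{-1}V)^{\otimes n}$ produces exactly a sum over planar rooted binary trees with $n$ leaves where internal edges carry $h$, internal vertices carry $\Delta_2'$, the root carries $i$ and the leaves $p$; by induction on $n$ this matches the recursive formula $\Delta_n' = \sum_{s+t=n}(-1)^{s(t+1)}(\Delta_s'h\otimes\Delta_t'h)\Delta_2'$.

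The main obstacle, as is traditional, is the sign verification: one must check that the Koszul signs arising from the tensor extension $\tilde h$ and from the iterated application of $\tau$ combine into Markl's sign $(-1)^{s(t+1)}$. I would handle this by computing carefully in weights $n=2,3,4$ to pin down the convention, then proceed by induction on $n$, exploiting the fact that both recursions split off a topmost internal edge of the tree. The Stasheff identities $\mathrm{SI}(n)$ on $V$ need no separate check because they are equivalent to $b^2=0$, which is immediate from $b_C^2=0$ and the standard HPL output. Finally, $jq+(bk+kb)=1$ and $qj=1$ are built into the HPL, completing the transferred homotopy retract on $\infty$-cobar constructions.
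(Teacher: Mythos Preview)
The paper does not prove this theorem: it is quoted, with the terminal \qed, as a simplified form of Theorem~5 in Markl's paper, so there is no in-paper argument to compare against. Markl's own proof works directly at the level of $C$ and $V$, building the tree formulas and verifying the $A_\infty$-identities by hand; your HPL/tensor-trick approach is a genuinely different and well-known alternative (in the tradition of Gugenheim--Lambe--Stasheff, later refined by Huebschmann, Berglund, and others), and it has the pleasant feature that $b^2=0$ and the retract identities on the cobar side come for free from the perturbation machinery.

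That said, there is a real gap in your outline. The sentence ``Because $\tilde h$ is a derivation-like map that preserves the tensor filtration, so is $b$; hence $b$ is a coderivation of the tensor algebra'' is not a proof. First, you need $b$ to be a \emph{derivation} of the free algebra $\Omega_\infty V$, not a coderivation. Second, and more substantively, the bare HPL only hands you a new differential on $\Omega_\infty V$ \emph{as a complex}; nothing in the lemma itself forces that differential to respect the algebra structure. The standard repair is to note that $\tilde\imath$ and $\tilde p$ are algebra maps, that your telescoping $\tilde h$ is an $(\tilde\imath\tilde p,1)$-derivation, and that the perturbation $\tau$ is a derivation; one then proves---this is precisely the content of the ``tensor trick'' or the multiplicative/filtered HPL---that the perturbed $j$ is an algebra map and the perturbed $b$ a derivation. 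You must either cite that result or sketch the induction on tensor length. Without it you have a square-zero endomorphism of $T(s^{-1}V)$ but no reason it is determined by its restriction to $s^{-1}V$, hence no $A_\infty$-coalgebra on $V$. The sign bookkeeping you flag is genuine but secondary to this structural point.
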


\begin{figure}[b]
\centering
\vspace{0 em}
\begin{tikzpicture}[scale = 1.2]
\draw[black, fill =black] (1,2.25) circle [radius = .05 cm];
\node at (1,2.25) {};
\node at (1.5,2.75) {};
\node at (.5,2.75) {};
\node at (1,2.25) {};
\node at (2,3.25) {};
\node at (0,3.25) {};
\begin{pgfonlayer}{bg}   
\draw (1,1.50)--(1,2.25);
\draw (1,2.25)--(2,3.25);
\draw (1,2.25)--(0,3.25);
\draw (2,3.25)--(3,4.25);
\draw (2,3.25)--(1,4.25);
\draw (3,4.25)--(4,5.25);
\draw (3,4.25)--(2,5.25);
\end{pgfonlayer}
\end{tikzpicture} 
\caption{A right comb}
\end{figure} 

{{}} There is a non-inductive definition of the maps 
$(\Delta_n)_{n\in\mathbb \NN}$ that will be useful to have in mind when 
we discuss $A_\infty$-coalgebra structures on $\Tor_A$, which can also
be found in \cite{Markl}*{Section 4}. Let $T$ be a planar binary tree with $n$ leaves, 
and let us assign to it a sign $\vartheta(T)$ as follows. For each 
vertex $v$ of $T$, let $r_1$ be the number of paths from a leaf of
$T$ to the root that pass through the first (left) input of $v$, and let
$r_2$ be the number of those that pass through the second (right). 
Set $\vartheta_T(v)=
r_1(r_2+1)$ and $\vartheta(T) = \sum_{v\in T} \vartheta_T(v)$. It will be important later on to observe that if $T$ is the right comb with $n$ leaves then $\vartheta(T) =
\binom{n+1}2 -1$.
Let us write $\Delta_T$ for the cooperation of arity $n$
obtained by decorating the leaves of $T$ by $p$, the root of $T$ by $i$, 
the inner vertices by $\Delta_2'$ and the inner edges by $h$; see Figure 1 in~\cite{Markl}*{Section 4}. We then have
the following result.

\begin{theorem}\label{thm:treesdesc}
Let $n\in\mathbb N$. Then $\Delta_n$ is given by
the sum $\sum_T (-1)^{\vartheta(T)}\Delta_T$
as $T$ ranges through all planar binary trees with $n$ leaves. 
\qed 
\end{theorem}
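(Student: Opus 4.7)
The plan is to prove the statement by induction on $n\geqslant 2$, using the recursive definition of $\Delta_n'$ from Theorem~\ref{thm:HTT} together with the recursive structure of planar binary trees under grafting. The base case $n=2$ is immediate: the unique $2$-leaf tree has one internal vertex with $r_1 = r_2 = 1$, so $\vartheta(T) = 2$, and $(-1)^{\vartheta(T)}\Delta_T = (p\otimes p)\Delta_2' i = \Delta_2$.

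For the inductive step, I would strengthen the statement slightly to keep track of all intermediate expressions. Writing $\Delta_T^\circ : C \to C^{\otimes k}$ for the tree evaluation where the root is decorated by $h$ (rather than $i$) and with the convention that the one-leaf tree gives $\Delta_T^\circ = \mathrm{id}_C$, the goal is to establish inductively that
\[
p^{\otimes k} \Delta_k' h \;=\; \sum_{T} (-1)^{\vartheta(T)}\,\Delta_T^\circ
\]
where $T$ ranges over planar binary trees with $k$ leaves, and that this yields $\Delta_n = \sum_T (-1)^{\vartheta(T)}\Delta_T$ once the root $h$ is replaced by $i$. Plugging the recursive formula for $\Delta_n'$ into $\Delta_n = p^{\otimes n}\Delta_n' i$ and applying the inductive hypothesis to each factor $p^{\otimes s}\Delta_s' h$ and $p^{\otimes t}\Delta_t' h$ (using $\Delta_1' h = 1$ for single-leaf subtrees), one obtains a sum over triples $(s,t,T_1,T_2)$ with $s+t=n$ and $T_i$ a binary tree with $s$ or $t$ leaves.

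The decisive combinatorial input is the bijection $T \leftrightarrow (T_1,T_2)$ given by $T = T_1 \vee T_2$ between planar binary trees with $n\geqslant 2$ leaves and pairs of planar binary trees with total leaf-count $n$. Under this bijection the tree evaluation factors as
\[
\Delta_{T_1\vee T_2} \;=\; \bigl(p^{\otimes s}\Delta_{T_1}^\circ \otimes p^{\otimes t}\Delta_{T_2}^\circ\bigr)\,\Delta_2'\, i,
\]
matching precisely the shape produced by the recursion, and the signs agree thanks to the identity
\[
\vartheta(T_1 \vee T_2) \;=\; \vartheta(T_1) + \vartheta(T_2) + s(t+1).
\]
This identity follows because at the new root vertex one has $r_1 = s$ and $r_2 = t$, contributing $s(t+1)$, while at every other vertex $v\in T_i$ the counts $r_1(v), r_2(v)$ are unchanged in passing from $T_i$ to $T_1\vee T_2$ (leaves of the other subtree never pass through $v$).

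The main obstacle I anticipate is bookkeeping the conventions: the recursion treats single-leaf subtrees uniformly via $\Delta_1' h = 1$, but in the tree $T_1\vee T_2$ such a subtree corresponds to an \emph{external} edge that should not be decorated by $h$, whereas a subtree with $\geqslant 2$ leaves corresponds to an \emph{internal} edge that must carry $h$. Checking that the $\Delta_1' h = 1$ convention makes these two cases merge into a single uniform formula, and that the $p$-decoration on leaves is correctly distributed through the recursion, is the delicate point; once this is settled, collecting the contributions over all $(s,t)$ and over all pairs $(T_1,T_2)$ yields exactly the sum over all planar binary trees $T$ with $n$ leaves, completing the induction.
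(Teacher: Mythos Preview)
The paper does not actually prove this theorem: it is stated with a \qed\ and attributed to \cite{Markl}. Your inductive argument via the grafting bijection $T\leftrightarrow(T_1,T_2)$ and the sign identity $\vartheta(T_1\vee T_2)=\vartheta(T_1)+\vartheta(T_2)+s(t+1)$ is exactly the standard proof from that reference, and the identification of the convention $\Delta_1'h=1$ as the device that makes external versus internal edges match up is correct.

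One small bookkeeping wobble: you declare $\Delta_T^\circ:C\to C^{\otimes k}$ (so leaves are undecorated), but then your strengthened hypothesis $p^{\otimes k}\Delta_k'h=\sum_T(-1)^{\vartheta(T)}\Delta_T^\circ$ has the left side landing in $V^{\otimes k}$ and the right side in $C^{\otimes k}$, and the factorisation $\Delta_{T_1\vee T_2}=(p^{\otimes s}\Delta_{T_1}^\circ\otimes p^{\otimes t}\Delta_{T_2}^\circ)\Delta_2'i$ would then apply $p$ to leaves that $\Delta_T$ already decorates with $p$. The clean fix is to drop the $p^{\otimes k}$ from the strengthened hypothesis and prove instead $\Delta_k'=\sum_T(-1)^{\vartheta(T)}\Delta_T'$, where $\Delta_T'$ carries neither $p$ on leaves nor $i$ (or $h$) at the root; then $\Delta_n=p^{\otimes n}\Delta_n'i$ gives the theorem directly, and the grafting factorisation reads $\Delta_{T_1\vee T_2}'=(\Delta_{T_1}'h\otimes\Delta_{T_2}'h)\Delta_2'$ with the convention that $\Delta_{T}'h=1$ for the one-leaf tree. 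This is purely notational; your argument is otherwise complete.
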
 

\section{The \texorpdfstring{$A_\infty$}{Ainfty}-coalgebra structure on Tor}\label{sec2:homcomb}

\subsection{The homotopy retract}\label{ssec:proofs}
 
Let $A$ be an algebra with a reduced Gröbner 
as in the construction of Jöllenbeck--Welker. 
Using the contraction 
\[ 
\begin{tikzcd}
\Tor_A\otimes A
	\arrow[shift left = .5 em]{r}{i} & 
B(A,A) 
	\arrow[shift left = .5 em]{l}{p}
	\arrow[out=30, in=330,distance=3em, loop]{}{h}
\end{tikzcd},\quad 1-ip = dh+hd,\quad pi =1,
\] 
obtained from Subsection~\ref{sec:ADMT} and from the
Morse matching for $B(A,A)$ described in Subsection~\ref{sec:AADMT}
we obtain, upon tensoring to the right with $\kk$, a
contraction
\[ 
\begin{tikzcd}
\Tor_A
	\arrow[shift left = .5 em]{r}{i} & 
BA
	\arrow[shift left = .5 em]{l}{p}
	\arrow[out=30, in=330,distance=3em, loop]{}{h}
\end{tikzcd},\quad 1-ip = dh+hd,\quad pi =1,\]
from the dga coalgebra $BA$ to its homology, $\Tor_A$.
This and the Homotopy Transfer 
Theorem~\ref{thm:HTT} provide us with a minimal $A_\infty$-coalgebra 
structure on $\Tor_A$, and which we will describe in detail 
by means of the combinatorics of Anick chains. It is worthwhile
to note that one may obtain this retract directly, by applying
the methods of~\cite{ADMT} to the bar construction $BA$. 
 
{{}} We recall that by construction $hi=0$, that is, $h$
vanishes on $\Tor_A$. Suppose now that $\gamma = [ x_{i_1} \vv u_1\vv 
\cdots \vv u_r ]$ is a bar term representing an Anick chain in $\Tor_A$.
We then have that $\Delta_2'(\gamma) = \sum \gamma_{(i)}\otimes \gamma^{(i)}$
where each left term $\gamma_{(i)}$ is also a chain: $\gamma_{(i)}$ is
the unique $i$-chain obtained from $\gamma$ by removing a right divisor. 
Since $\Delta_n$ is obtained by projecting the map $\Delta_n' : BA
 \longrightarrow BA^{\otimes n}$, defined recursively in Theorem~\ref{thm:HTT},
 we obtain the following. 

\begin{prop}\label{prop:flushright} For $n\geqslant 3$ we have that
$ \Delta_n' = (-1)^n(1 \otimes \Delta_{n-1}'h)\Delta_2'$
on $\Tor_A$.
\end{prop}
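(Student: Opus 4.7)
The plan is to apply the recursive formula from Theorem~\ref{thm:HTT}, namely
\[
\Delta_n' = \sum_{\substack{s+t=n \\ s,t>0}} (-1)^{s(t+1)} (\Delta_s'h\otimes \Delta_t'h)\Delta_2',
\]
and show that on a chain $\gamma\in\Tor_A$ all terms with $s\geqslant 2$ vanish, so that only the term indexed by $(s,t)=(1,n-1)$ survives.

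The key input is the combinatorial observation recalled just before the proposition: if $\gamma = [x_{i_1}\vv u_1\vv\cdots\vv u_r]$ is a bar term representing an Anick chain, then $\Delta_2'(\gamma) = \sum_i \gamma_{(i)}\otimes \gamma^{(i)}$, where each left factor $\gamma_{(i)}$ is itself a chain, namely the unique $i$-chain obtained from $\gamma$ by stripping a right divisor. Since chains are precisely the critical vertices of the Morse matching constructed in Subsection~\ref{sec:AADMT}, and since the contraction $(i,p,h)$ satisfies the side condition $hi=0$, we obtain $h(\gamma_{(i)})=0$ for every term in $\Delta_2'(\gamma)$.

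Now read off the summands of the HTT formula applied to $\gamma$. For any $s\geqslant 2$, the operator $\Delta_s'h$ begins by applying $h$ to its input; since this input is a chain $\gamma_{(i)}$, the whole left tensor factor vanishes, and hence the corresponding summand is zero. Only the summand with $s=1$, $t=n-1$ survives, and for it the convention $\Delta_1'h=1$ turns the left factor into the identity, leaving exactly
\[
(-1)^{1\cdot n}(1\otimes \Delta_{n-1}'h)\Delta_2' = (-1)^n(1\otimes \Delta_{n-1}'h)\Delta_2'.
\]

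There is no substantive obstacle; the entire content is that $\Delta_2'$ of a chain leaves a chain on the left, combined with $hi=0$. The only point requiring minor care is the book-keeping of signs: verifying that $(-1)^{s(t+1)}$ specialises to $(-1)^n$ at $(s,t)=(1,n-1)$, which is immediate.
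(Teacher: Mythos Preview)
Your proof is correct and is exactly the argument the paper intends: the paragraph immediately preceding the proposition records the two ingredients---that $hi=0$ so $h$ vanishes on chains, and that every left factor of $\Delta_2'(\gamma)$ is itself a chain---and the \qed signals that plugging these into the recursive formula of Theorem~\ref{thm:HTT} kills all summands with $s\geqslant 2$. Your write-up simply makes this explicit.
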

\begin{proof}
Let $\gamma$ be a chain. 
Looking at the recursive definition of higher
coproducts given by~\ref{thm:HTT}, any term which
contains $\Delta_s'h$ on the left for some
$s\geqslant 2$ will act by zero on $\Delta_2'(\gamma)$, 
since the all the terms to the left of the tensor
appearing in this sum are also chains, and
we already know $hi=0$. 
\end{proof}

\subsection{Description of the homotopy}
 
{{}} In this section, we consider only the case when
$A$ is monomial. From the last proposition of the previous section, 
it follows in particular that $\Delta_3' = -(1\otimes \Delta_2' h)\Delta_2' $ on $\Tor_A$,
so the only tree that appears in $\Delta_3$ is the right comb.
Given $n\in\NN_{\geqslant 3}$, we would like to show this is 
the case for the higher coproduct
\[ \Delta_n : \Tor_A\longrightarrow \Tor_A^{\otimes n} .\]
defined by $p^{\otimes n} \Delta_n' i$. We will need an 
explicit description of the homotopy $h$. Because 
it will be useful later on, we also give a description of
the projection $p$: this is the content of the following  
lemma.  Let us say a bar term $[u_0\vv\cdots\vv u_r]$ is
\new{attached} if for $i\in\{1,\ldots,r\}$, we have $u_{i-1}u_i=0$. 

\label{par:handp} 
Suppose that $\gamma =  [u_0\vv\cdots\vv u
_r]$ is attached but is not a chain. Then there is a 
largest $i_1$ such that $u_i = u_i'u_i''$ and such 
that $\eta^1 = [u_0\vv\cdots\vv u_i']$ is a chain. Remark that
by this we mean the \emph{bar structure} is also the correct one;
for example, $[t\vv t^2]$ and $[t^2\vv t]$ both have underlying
monomial the chain $t^3$ in $\kk[t]/(t^3)$ but
only the first is a $1$-chain.  
It may happen that $i=0$, in which case $u_0'$ 
is simply the first variable in $u_0$, as it does for $[t^2\vv t]$. 
We define 
\[\gamma^1 = (-1)^{i_1+1}[\eta^1 \vv {u_{i_1}''}\vv 
	{u_{i_1+1}} \vv \cdots \vv 
	{u_r}],\quad
	\Gamma^1 = [\eta^1 \vv u_{i_1}'' 
	{u_{i_1+1}} \vv \cdots \vv 
	{u_r}]. \]
If $\Gamma^1$ is a chain or zero, stop. Else, there is 
some largest $i_2>i_1$ such that, keeping in with the 
notation above, $\eta^2 = [u_0\vv \cdots \vv u_{i_1}' 
\vv\cdots \vv u_{i_2}'']$ is a chain.
In which case, set
\[\gamma^2 = (-1)^{i_2+1}[\eta^2 \vv
	u_{i_2}'' \vv u_{i_2+1}\vv
	\cdots \vv 
	{u_r}], \quad
	\Gamma^2 = [\eta^2 \vv
	{u_{i_2}''}u_{i_2+1} \vv 
		\cdots\vv 
	{u_r}].\]
Continuing in this way, we obtain terms 
$\gamma=\Gamma^0,\cdots,\Gamma^{n}$ and 
$\gamma^1,\cdots,\gamma^n$, where 
$\Gamma^{n}$ is either zero or a chain. 
For convenience, we will agree that $\gamma^m = 0$ for 
$m>n$, and note that the sign accompanying $\gamma^a$
is $(-1)^{i_a+1}$, where $i_a$ is simply the length
of the largest chain $\eta^a$ contained in $\gamma^a$, starting
from the left.
If $\gamma$ is a bar term in degree $r+1$ whose underlying 
monomial is an $r$-chain, we will write $\Gamma$
for the $r$-chain obtained from $\gamma$ at the end of
the algorithm above, which we observe has no signs. 
Observe that by construction, the sequence 
$(i_a)_{a\geqslant 1}$ is strictly increasing, until it stabilizes. 

\begin{lemma}\label{lemma:handp} With the notation above, 
we have that
\[ h(\gamma) = \sum_{i=1}^n 
 \gamma^i,\quad 
p(\gamma) 
=\Gamma.\]
\end{lemma}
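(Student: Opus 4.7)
The plan is to apply the path-sum formulas of Theorem~\ref{thm:mainadmt} to the Morse matching $M$ described in Section~\ref{sec:AADMT}. By that theorem,
\[
  p(\gamma) = \sum_{\gamma' \in X^M} \Gamma(\gamma, \gamma')\,\gamma', \qquad h(\gamma) = \sum_{\gamma'} \Gamma(\gamma, \gamma')\,\gamma',
\]
where the second sum is over all basis elements in degree one above $\gamma$. By the observation at the end of Section~\ref{sec:ADMT}, every path in $G_M$ that raises the degree by one has the alternating form $e_0'e_1e_1'\cdots e_je_j'$, with each $e_i'$ an inverted matching edge and each $e_i$ a direct non-matching edge, so it suffices to determine which such paths start at $\gamma$ and contribute non-trivially.

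First I would analyse the inverted edges out of $\gamma$. By Lemma~\ref{lemma:edges}, splitting a slot $u_i = u_i'u_i''$ with $u_i', u_i'' \neq 1$ gives an inverted edge in $G_M$ precisely when $[u_0\vv\cdots\vv u_{i-1}\vv u_i']$ is a chain of length $i$. Since $\gamma$ is attached but not a chain, the maximal such $i$ coincides with the $i_1$ of the statement, and the corresponding factorisation is uniquely forced: for smaller indices the product $u_{i-1}u_i$ is already minimally zero and admits no proper splitting, while for larger indices the chain condition on the prefix fails earlier. The resulting unique inverted edge is $\gamma \to \gamma^1$, with weight $(-1)^{i_1+1}$ coming from the sign $(-1)^{j}$ of an inverted $M_j$-edge at $j = i_1+1$.

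I would then verify inductively that from $\gamma^k$ the only non-zero direct edge in $G_M$ multiplies $u_{i_k}''$ with $u_{i_k+1}$ and leads to $\Gamma^k$ with sign $(-1)^{i_k+1}$ from the $(i_k+2)$-th term of the bar differential, and that from $\Gamma^k$ the only non-zero inverted edge leads to $\gamma^{k+1}$. The first claim follows because multiplications inside the chain prefix $\eta^k$ give the zero bar term by minimality; the multiplication $u_{i_k}'u_{i_k}''$ realises the matching edge $M_{i_k+1}$ already used as its inversion to reach $\gamma^k$, and is therefore absent as a direct edge in $G_M$; and multiplications further to the right vanish by attachedness of $\gamma$. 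The second claim follows by reapplying the first-step analysis to the attached non-chain $\Gamma^k$. The process terminates at step $n$ where $\Gamma^n$ is either zero or a chain, hence a critical vertex.

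Assembling these observations, the only paths contributing to $h(\gamma)$ are the truncations $\gamma \to \gamma^1 \to \Gamma^1 \to \cdots \to \gamma^k$ for $k = 1, \ldots, n$, and the only one contributing to $p(\gamma)$ is the full path ending at $\Gamma^n = \Gamma$. The sign computation is straightforward: the inverted edge splitting at index $i_j$ and the subsequent direct edge both carry weight $(-1)^{i_j+1}$, so they pair to $+1$; telescoping these pairs yields total weight $(-1)^{i_k+1}$ for the path to $\gamma^k$ and $+1$ for the path to $\Gamma$, which gives the stated formulas. The main obstacle I expect is the inductive uniqueness claim: at each $\gamma^k$ and $\Gamma^k$ one must rule out contributions from the many other edges in the differential, and this requires a careful case analysis using the matching conditions \textbf{B1}--\textbf{B3} together with the attachedness of $\gamma$.
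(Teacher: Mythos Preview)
Your proposal is correct and follows essentially the same approach as the paper: trace the unique alternating path in $G_M$ out of the attached term $\gamma$ using the description of the matching edges in Lemma~\ref{lemma:edges}, observing that all face maps of $\gamma^k$ other than the one to $\Gamma^k$ are either zero or the removed matching edge. The paper's proof is a two-sentence sketch of exactly this argument; you have supplied the case analysis and the sign computation that the paper leaves to the reader.
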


\begin{proof}
Suppose that $\gamma = [u_0\vv\cdots\vv u_r]$ is attached. 
From the
description of the Morse graph in Lemma~\ref{lemma:edges}, we see that
there is a unique inverted edge from $\gamma$ to the element
$\gamma^1$ in the previous paragraph. The face maps of
$\gamma^1$ are all zero or $\gamma$ except possibly for
$\Gamma^1$, up to sign. If $\Gamma^1$ is critical, there is no
inverted edge leaving $\Gamma^1$, and so $h$ is what we claim.
Else, we can repeat the argument above. The claim for $p$ 
follows, \textcolor{comcol}{since there is a unique path from
$\gamma$ to $\Gamma$ in the Morse graph, which
is obtained by following the terms $\gamma^1,\Gamma^1,\gamma^2,\Gamma^2,\ldots$ that we
described above. Finally, 
 the signs can be read off
from the definition of the Morse graph
and the differential of the bar construction.
Indeed, the signs in the bar construction are 
such that when a bar is removed and two terms
are multiplied, there is a sign $(-1)^N$ where
$N$ is the number of terms preceding the first
factor that was multiplied, and the Morse graph
inverts and negates every sign, effectively
changing every $-1$ to a $1$, and vice-versa.
A close look shows that this are precisely the 
signs we have incorporated in our description
of the elements $\gamma^1,\gamma^2,\ldots$ and $\Gamma^1,\Gamma^2,\ldots$, which completes the
proof the Lemma.}
\end{proof}

{{}} \textcolor{comcol}{
It is useful to observe that 
uniqueness follows precisely because our algebra
is monomial, and hence one can either ``undo'' a 
differential in the Morse graph in a unique way,
or fail to do so. If we had more complex 
rewriting rules, it would be a priori possible
to obtain a term in multiple ways from reduced
monomials, causing our paths to branch and
making the description of the action of $h$
and $p$ above much more complicated.} In the language of the Morse graph of $M$, we
have the following corollary.

\begin{cor} Let $c$ be a vertex in $G^M$ of degree
$t$ that is not critical. There is a unique element
$c'$ of degree $t+1$ and a unique element $c''$
of degree $t$, which is either zero or critical,
a unique path in $G^M$ from $c$ to $c'$ and, if
$c''$ is nonzero, a unique edge from $c'$ to $c''$.
Thus, the coefficients in the homotopy of 
Theorem~\ref{thm:mainadmt} are all $1$ or $-1$ and
$p(c)$ coincides with $c''$. 
\end{cor}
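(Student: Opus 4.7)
The plan is to deduce this reformulation from Lemma~\ref{lemma:handp} by translating its algorithmic description of $h$ and $p$ into the graph-theoretic language of the Morse graph $G_M$. Given a non-critical bar term $c = \gamma = [u_0 \vv \cdots \vv u_r]$ of degree $t$, the algorithm preceding that lemma produces a zigzag
\[ c = \Gamma^0,\ \gamma^1,\ \Gamma^1,\ \ldots,\ \gamma^n,\ \Gamma^n \]
alternating between degrees $t$ and $t+1$, terminating precisely when $\Gamma^n$ becomes zero or a chain (hence critical). One takes $c'' = \Gamma^n$ and $c' = \gamma^n$ as the candidates in the statement.

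I would first establish the uniqueness assertions. At each intermediate stage, the non-critical element $\Gamma^i$ of degree $t$ lies in the source of a unique matched edge, by Lemma~\ref{lemma:edges}, and so has exactly one inverted outgoing edge in $G_M$, namely the one reaching $\gamma^{i+1}$. From $\gamma^{i+1}$, the candidate outgoing non-inverted edges correspond to nonzero faces other than $\Gamma^i$; by the minimality built into the construction of the algorithm, all such faces vanish except for the distinguished one yielding $\Gamma^{i+1}$, which is obtained by multiplying the rightmost newly inserted factor with its right neighbour. This means that at each step the next vertex of the zigzag is determined without ambiguity, which gives the uniqueness of the path from $c$ to $c'$ and of the edge from $c'$ to $c''$ at the terminal step.

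With uniqueness of the zigzag in hand the remaining claims are immediate. Since the bar differential has coefficients $[x : y] \in \{\pm 1\}$ on basis elements, every matched edge and every inverse edge in $G_M$ carries weight $\pm 1$, so each sum $\Gamma(c,c')$ computing a coefficient of $h$ in Theorem~\ref{thm:mainadmt} reduces to a product of signs and therefore lies in $\{\pm 1\}$. The identity $p(c) = c''$ is precisely the statement $p(\gamma) = \Gamma$ from Lemma~\ref{lemma:handp}, since the $\Gamma$ returned by the algorithm is exactly the terminal $\Gamma^n$. The main subtlety, rather than an obstacle, is the verification that each $\gamma^{i+1}$ has only one non-vanishing non-inverted face; but this is exactly the combinatorial content already handled in Lemma~\ref{lemma:handp}, where the monomial structure imposed by the obstructions forces all other faces to vanish.
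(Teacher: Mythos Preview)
Your proposal is correct and follows essentially the same route as the paper: the corollary is marked \qed\ precisely because it is the graph-theoretic restatement of the zigzag described in Lemma~\ref{lemma:handp} and its proof, and you have simply made that translation explicit. The only point worth noting is that both the lemma and your argument are phrased for \emph{attached} bar terms, whereas the corollary is stated for arbitrary non-critical vertices; this is harmless in context, since the attached case is all that is needed downstream, but you might flag that the same reasoning goes through verbatim for non-attached terms once one observes that the description of $M_j$ in Lemma~\ref{lemma:edges} still forces a unique inverted edge and at most one surviving face at each step.
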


\begin{proof}
\textcolor{comcol}{The proof of Lemma~\ref{lemma:handp} shows
there is a unique path to follow when computing
the action of the homotopy $h$ on a non-critical
vertex, and is given by the successive terms
$\gamma^1,\gamma^2,\ldots$ and $\Gamma^1,\Gamma^2,\ldots$ that we wrote down explicitly
before its proof. The conclusion that the coefficients in the homotopy are all $-1$ or
$1$ follow also from the careful description
of the terms above, since we gave the signs 
explicitly. The unique element $c''$ corresponds
to $\Gamma$, while the unique element $c'$
corresponds to the last non-zero term in
the sequence $(\gamma^1,\gamma^2,\ldots)$.}
\end{proof}

{{}} To illustrate, let us consider a
third algebra $L= T(t)/(t^N)$ where $N>2$. We then have
\[ h[\overbrace{t^{N-1}\vv t}^{m}] = -\sum_{i=0}^m 
	[\overbrace{t\vv t^{N-1}}^{i} \vv t\vv t^{N-2} \vv 
		\overbrace{t\vv t^{N-1}}^{m-i-1}\vv t]\]
where the brackets mean the terms are repeated the indicated
amount of times. Note that, since in every summand the homotopy
extracted a chain of odd homological degree, all the signs are
the same.
Using the results of Section~\ref{sec3:mainres} 
the reader may recover the $A_\infty$-coalgebra structure 
on $\Tor_A$ for $p$-Koszul monomial algebras, dual to the 
$A_\infty$-algebra structure on $\Ext_A$ obtained in~\cite{HeLu}.

\subsection{The exchange rule and the right comb}

{{}} We now prove the desired result that when computing the higher
coproducts in $\Tor_A$ obtained from the homotopy retraction 
datum of Section~\ref{sec:AADMT}, the only contributing tree is 
the right comb. The following exchange rule for $h$ and $\Delta_2'$ will easily imply this result. 

\begin{lemma}\label{lemma:exchange}
If $\gamma$ is attached then
$\Delta_2'(h(\gamma)) = (h\otimes 1) 
\Delta_2'(\gamma)$ modulo $\Tor_A \otimes BA$.
\end{lemma}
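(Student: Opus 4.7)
The plan is to expand both sides using the explicit description $h(\gamma) = \sum_{a}(-1)^{i_a+1}\gamma^a$ from Lemma~\ref{lemma:handp}, together with the deconcatenation formula for $\Delta_2'$, and then match terms after discarding elements of $\Tor_A\otimes BA$.

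First I would expand $\Delta_2' h(\gamma) = \sum_a (-1)^{i_a+1}\Delta_2'(\gamma^a)$. Each $\gamma^a$ has the chain prefix $\eta^a$ ending with $u_{i_a}'$, followed by $u_{i_a}''$ and the tail of $\gamma$ (with the merges at $i_1,\ldots,i_{a-1}$ absorbed into $\eta^a$). I classify each deconcatenation of $\gamma^a$ by where the split falls. If the left factor is a (sub-)prefix of $\eta^a$, it is itself a chain — sub-prefixes of chains are chains in the monomial case — so the summand lies in $\Tor_A\otimes BA$ and is discarded. Otherwise the left factor contains the adjacent pair $u_{i_a}'\vv u_{i_a}''$ (so is not attached, hence not a chain), and the right factor is the unaltered suffix $[u_{k+1}\vv\cdots\vv u_r]$ of $\gamma$ for some $k\geq i_a$.

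Next I would expand the right-hand side as
\[ (h\otimes 1)\Delta_2'(\gamma) = \sum_{k=0}^{r-1} h([u_0\vv\cdots\vv u_k])\otimes[u_{k+1}\vv\cdots\vv u_r], \]
applying Lemma~\ref{lemma:handp} to each prefix, with $h$ vanishing on prefixes that are already chains. The central combinatorial fact is that the chain-making splits for $[u_0\vv\cdots\vv u_k]$ are exactly those $i_a$ of $\gamma$ with $i_a\leq k$, producing the same $\eta^a$'s. This holds because the procedure defining the splits is \emph{left-local}: the largest $i_a$ and the associated $\eta^a$ are determined by the data in $\gamma^a$ up to position $i_a$, leaving the tail irrelevant. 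Granted this restriction property, the surviving left-hand side terms match the right-hand side terms bijectively with matching signs $(-1)^{i_a+1}$, proving the lemma.

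The main obstacle is justifying the restriction property rigorously, since the iterative construction in Lemma~\ref{lemma:handp} passes through intermediate merged terms $\Gamma^a$ whose structure could a priori be influenced by entries past $i_a$. The cleanest way to handle this is to translate to the Morse-theoretic picture of Section~\ref{sec:ADMT}: the paths in $G^M$ realizing each $\gamma^a$ are composed of inverted and direct edges in our matching, each acting on bar entries at positions $\leq i_a$, so truncating $\gamma$ past any $k\geq i_a$ leaves these paths intact while truncating before $i_a$ eliminates them.
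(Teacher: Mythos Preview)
Your proposal is correct and follows essentially the same route as the paper's own proof. The paper packages what you call the ``restriction property'' into four bookkeeping identities --- that $(\gamma_{(j)})^a$ vanishes for $j<i_a$, that $(\gamma^a)_{(j)}$ is a chain for $j\leqslant i_a+1$, and that $(\gamma^a)_{(j)}=(\gamma_{(j-1)})^a$ and $(\gamma^a)^{(j)}=\gamma^{(j-1)}$ for larger $j$ --- and then performs exactly the split-and-reindex manipulation you describe; it simply asserts these identities follow ``from the definitions'' rather than invoking the Morse-graph picture you suggest for the locality of the $h$-algorithm.
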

\begin{proof}
This is a direct computation, albeit a 
bit cumbersome. We will use the notation of \ref{par:handp}.
Let $\gamma= [u_0\vv\cdots\vv u_r]$ and write 
$\Delta_2'(\gamma) = \sum_{i=1}^r \gamma_{(i)}\otimes \gamma^{(i)}$. 
From the definitions it follows that if $j\in \{1,\ldots,r\}$
then:
\begin{tenumerate}
\item $(\gamma_{(j)})^a=0$ if $j< i_a$.
\item $(\gamma^a)_{(j)}$ is a chain for $j\leqslant i_a+1$.
\item $(\gamma^a)_{(j)} = (\gamma_{(j-1)})^a$ for
$j\geqslant i_a +1$. 
\item $(\gamma^a)^{(j)} = \gamma^{(j-1)}$ for
$j > i_a +1$. 
\end{tenumerate}
\textcolor{comcol}{
That $(1)$ holds follows, for if the chain
we want to extract from $\gamma$ appears after the $j$th bar, then $\gamma_{(j)}$ will be
too short to contain it. It is clear that $(2)$ holds, since $\gamma^a$
has been ``straightened'' to chain structure up 
to the $i_a$th bar, and we already observed any initial
bar term of a chain is again a chain. Note that
$(3)$ says that we can either straighten $\gamma$
to a chain up to step $a$ and then truncate
far from where this chain ends, or we can 
truncate $\gamma$ at worst at the boundary 
and then straighten it to a chain: the result is
the same. Finally, $(4)$ says that the tail
of $\gamma$ is not affected if cut beyond the
straightening bar.} This means that we can write
\begin{align*}
\Delta_2'(h(\gamma)) &= 
	\sum_{a\geqslant 1} \sum_{j\leqslant r+1}
		(\gamma^a)_{(j)}\otimes (\gamma^a)^{(j)}\\
	&= \sum_{a\geqslant 1} \sum_{j\leqslant i_a+1}
		(\gamma^a)_{(j)}\otimes (\gamma^a)^{(j)}+
	\sum_{a\geqslant 1} \sum_{i_a < j-1 \leqslant r}
		(\gamma^a)_{(j)}\otimes (\gamma^a)^{(j)}\\
		\end{align*}
		\begin{align*}
\hphantom{\Delta_2'(h(\gamma)) }	&= \sum_{a\geqslant 1} \sum_{j\leqslant i_a+1}
		(\gamma^a)_{(j)}\otimes (\gamma^a)^{(j)}+
	\sum_{a\geqslant 1} \sum_{i_a < j-1 \leqslant r}
		(\gamma_{(j-1)})^a\otimes \gamma^{(j-1)} \\
		&= \sum_{a\geqslant 1} \sum_{j\leqslant i_a+1}
		(\gamma^a)_{(j)}\otimes (\gamma^a)^{(j)}+
	\sum_{a\geqslant 1}\sum_{i_a \leqslant j \leqslant r}
		(\gamma_{(j)})^a\otimes \gamma^{(j)}
\end{align*}
where the third equality uses (iii) and (iv), and from
(ii) it follows the first summand is in $\Tor_A\otimes BA$.
Finally, from (i) it follows that the second sum
is, in fact, $(h\otimes 1)(\Delta_2'(\gamma))$, which
completes the proof of the lemma.
\end{proof}

\begin{cor} We have $(h\otimes 1)\Delta_2' h = 0$ on
attached bar terms. 
\end{cor}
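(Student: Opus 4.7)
The plan is to derive the corollary directly from Lemma~\ref{lemma:exchange} together with two basic facts about $h$ that are already recorded: first, $h$ vanishes on $\Tor_A$ (this is the side condition $hi=0$, noted at the end of Section~\ref{sec:ADMT}), and second, $h^2=0$ (also noted there, since a path realising $h^2$ would require two consecutive inverted matching edges, forbidden by condition \textbf{M1}).

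Given an attached bar term $\gamma$, Lemma~\ref{lemma:exchange} gives
\[
\Delta_2'(h(\gamma)) \equiv (h\otimes 1)\Delta_2'(\gamma) \pmod{\Tor_A \otimes BA}.
\]
I would then apply $h\otimes 1$ to both sides. On the right-hand side we obtain $(h^2\otimes 1)\Delta_2'(\gamma) = 0$ because $h^2 = 0$. On the left-hand side, the error term living in $\Tor_A\otimes BA$ is annihilated by $h\otimes 1$ because $h$ vanishes on $\Tor_A$. Hence $(h\otimes 1)\Delta_2' h(\gamma) = 0$, as desired.

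There is essentially no obstacle here: the corollary is a two-line consequence of the exchange rule combined with the side conditions $h^2 = 0$ and $hi=0$. The only minor thing to notice is that the ``modulo $\Tor_A\otimes BA$'' in Lemma~\ref{lemma:exchange} is exactly the subspace on which $h\otimes 1$ vanishes, which is precisely why the exchange rule was formulated in that form: it is tailored to yield this cancellation. This corollary will in turn be the engine behind showing that in the recursive formula for $\Delta_n'$ only the right comb contributes, since any tree with a nontrivial left subtree will have an $(h\otimes 1)\Delta_2' h$ factor somewhere in its decoration.
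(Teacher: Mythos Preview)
Your proof is correct and follows exactly the paper's approach: the paper's own proof is a single sentence stating that the corollary ``follows from our exchange rule and the fact $h$ has square zero and vanishes on $\Tor_A$,'' which is precisely the argument you have spelled out in detail.
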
 

\begin{proof}
This now follows from our exchange rule and the fact $h$ has square zero  and vanishes on $\Tor_A$.
\end{proof}

\begin{theorem}\label{thm:comb}
Let $n\in\NN_{\geqslant 3}$ and let $\gamma\in \Tor_A$ be an element
represented by an Anick chain. The only tree that contributes to 
$\Delta_n'(\gamma)$, and hence to $\Delta_n(\gamma)$, is the right comb.   
\end{theorem}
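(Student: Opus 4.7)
The plan is to proceed by induction on $n \geqslant 3$, peeling off one right-comb step at each level of Markl's recursion. The base case $n = 3$ is the Proposition in Subsection~\ref{ssec:proofs}: writing $\Delta_2'(\gamma) = \sum_j \gamma_{(j)} \otimes \gamma^{(j)}$, the left factors $\gamma_{(j)}$ are Anick chains, hence lie in the image of $i$, so $h(\gamma_{(j)}) = 0$ and every $s \geqslant 2$ summand in the Markl formula dies on the left, leaving only $\Delta_n'(\gamma) = (-1)^n(1 \otimes \Delta_{n-1}'h)\Delta_2'(\gamma)$. This forces the expansion of the first split to be a leaf on the left and a subtree on the right.

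For the inductive step it remains to show that $\Delta_{n-1}'h(\gamma^{(j)})$ itself reduces to the right-comb contribution. I apply Markl once more and, for each $s \geqslant 2$, factor
\[(\Delta_s'h \otimes \Delta_t'h)\Delta_2'h(\gamma^{(j)}) = (\Delta_s' \otimes \Delta_t'h)(h \otimes 1)\Delta_2'h(\gamma^{(j)}),\]
which vanishes by the Corollary to the Exchange Lemma: $\gamma^{(j)}$ is a tail of the chain $\gamma$ and therefore an attached bar term, so $(h \otimes 1)\Delta_2'h(\gamma^{(j)}) = 0$. Only the $s = 1$ term survives, producing the next step of the right comb, namely $(-1)^{n-1}(1 \otimes \Delta_{n-2}'h)\Delta_2'h(\gamma^{(j)})$. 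Iterating this reduction at every subsequent level collapses the entire tree sum onto the single right-comb tree, and the signs $(-1)^n(-1)^{n-1}\cdots(-1)^3 = (-1)^{\binom{n+1}{2}-3}$ reproduce $(-1)^{\vartheta(RC_n)}$ from Theorem~\ref{thm:treesdesc} up to an even correction, confirming the identification with the right-comb contribution.

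The delicate point, and the step I expect to be the main obstacle, is keeping the Corollary applicable throughout the iteration: at the $k$-th level the argument of $\Delta_2'h$ is a right half that appeared at level $k-1$, and we need each such right half to itself be an attached bar term. Using the Exchange Lemma to decompose $\Delta_2'h(\beta) = (h \otimes 1)\Delta_2'(\beta) + R$ with $R \in \Tor_A \otimes BA$, the right halves of the first summand are tails of $\beta$ and therefore attached by induction on the level, while those of $R$ are paired on the left with chains, which are annihilated by any subsequent application of $h$; combined with the explicit description of $h$ in Lemma~\ref{lemma:handp}, this is what keeps the collapse alive down to $\Delta_2'$ at the bottom level.
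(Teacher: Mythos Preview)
Your approach is the paper's: kill every non-right-comb tree by invoking the Corollary $(h\otimes 1)\Delta_2'h=0$, whether phrased via the tree picture (as the paper does) or via Markl's recursion level by level (as you do). You are also right that the delicate point is keeping the Corollary applicable at each level, since it is stated only for attached bar terms; the paper's own proof is equally terse here. But your proposed resolution does not close the gap. After splitting $\Delta_2'h(\beta)=(h\otimes 1)\Delta_2'(\beta)+R$ with $R\in\Tor_A\otimes BA$, you claim the chains on the left of $R$ are ``annihilated by any subsequent application of $h$''. In the $s=1$ branch you are iterating, however, the next operator is $(1\otimes\Delta_{n-2}'h)$, which never touches that left factor; the chain simply persists, and you still need $(h\otimes 1)\Delta_2'h$ to vanish on the right halves $b$ of $R$. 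These $b$ are in general \emph{not} attached---already for $A=\kk[t]/(t^3)$ and $\beta=[t^2\vv t\vv t^2\vv t]$ one meets $b=[t\vv t\vv t^2\vv t]$---so the Corollary as stated does not apply, and the appeal to Lemma~\ref{lemma:handp} does not help since that lemma only describes $h$ on attached terms.

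The clean fix is to observe that the Corollary actually holds on all of $BA$. Every nonzero summand of $h(c)$, for arbitrary $c$, is the endpoint of a zigzag path in $G_M$ and hence the source of a matching edge: a bar term of the form $[\eta\vv v\vv\cdots]$ with $\eta$ a chain and $(\text{last entry of }\eta)\cdot v\neq 0$. For such a source, every left half under $\Delta_2'$ is either a subchain of $\eta$ (killed by $h$ since $hi=0$) or again a source of a matching edge (killed by $h$ since no inverted edge leaves a source). Hence $(h\otimes 1)\Delta_2'$ vanishes on the image of $h$, so $(h\otimes 1)\Delta_2'h=0$ unconditionally. With this strengthening, your induction---and the paper's tree argument---goes through without any attachedness bookkeeping.
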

\begin{proof}
The
fact that $h$ vanishes on $\Tor_A$ means that, at 
the root, the left edge must be a leaf. Knowing this,
the exchange rule means that if $T$ is planar and contains any subtree of the form
$$
\begin{tikzpicture}[scale = 1.5]
\draw[black, fill =white] (.5,2.75) circle [radius = .15 cm];
\draw[black, fill =white] (.5,1.75) circle [radius = .15 cm];
\draw[black, fill =white] (1,2.25) circle [radius = .15 cm];
\node at (1,2.25) {\small $\Delta$};
\node at (.5,1.75) {\small $ h$};
\node at (.5,2.75) {\small $ h$};
\node at (1,2.25) {};
\node at (2,3.25) {};
\node at (0,3.25) {};
\begin{pgfonlayer}{bg}   
\draw (0,1.25)--(1,2.25);
\draw (1,2.25)--(2,3.25);
\draw (1,2.25)--(0,3.25);
    \end{pgfonlayer}
\end{tikzpicture} 
$$
which corresponds to $(h\otimes 1)(\Delta_2'h)$,
the operator $\Delta_T$ will vanish identically. 
This means that the only tree that may possibly give
a nonzero contribution to $\Delta_n$ is the right
comb.
\end{proof}

{{}} Let us also record here the following easy 
proposition, which means, plainly, that the computation of 
the $A_\infty$-structure of $\Tor_A$ depends only on the
local information on a given chain. Thus, there seems to be 
no upshot from looking at induced maps when relations are added.

\begin{prop}
Suppose $A$ is a monomial algebra and $B$ is obtained by adjoining
to $A$ a non-redundant monomial relation. Let $\varphi : A\to B$ be the
quotient map. Then the map $\Tor_\varphi : \Tor_A\longrightarrow \Tor_B$
identifies $\Tor_A$ as a sub-$A_\infty$-coalgebra of $\,\Tor_B$ in such
a way that the coproducts of $\,\Tor_A$ are the restriction of those of $\,\Tor_B$ through $\Tor_\varphi$.  
\end{prop}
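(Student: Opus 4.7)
The statement should follow as an immediate corollary of the main theorem of Section~\ref{sec3:mainres}, once one observes that the decomposition-sum formula for $\Delta_n$ is manifestly local in the underlying monomial of the chain $\gamma$: only subwords of $\gamma$ and their chain statuses contribute.

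First, I would describe $\Tor_\varphi$ on the chain basis. Since $m$ is a non-redundant monomial relation, the set of obstructions of $B$ is obtained from that of $A$ by adjoining $m$, and every $A$-chain $\gamma$ with bar representative $[u_0\vv u_1\vv\cdots\vv u_r]$ is also a $B$-chain with the same bar representative: each condition ``$u_{j-1}u_j=0$ minimally'' is inherited, because no $u_i$ contains $m$. This bar term is critical for the $B$-Morse matching of Section~\ref{sec:AADMT}, so composing $\varphi_\ast:BA\to BB$ with the projection $p^B$ sends the basis vector of $\Tor_A$ indexed by $\gamma$ to the basis vector of $\Tor_B$ indexed by $\gamma$. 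In particular, $\Tor_\varphi$ is injective on the distinguished bases.

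Next, I would compare decompositions. A $B$-decomposition $(\gamma_1,\ldots,\gamma_n)$ of $\gamma$ is a tuple of $B$-chains concatenating to $\gamma$ with lengths summing to $r-1$, and each $\gamma_i$ is a subword of $\gamma$. Because $m$ does not appear as a subword of any $A$-chain, no $\gamma_i$ can use $m$ as an obstruction, so every $\gamma_i$ is already an $A$-chain. Therefore the $B$- and $A$-decomposition sets of $\gamma$ coincide, and evaluating the formula of the main theorem in $B$ and in $A$ produces identical signed sums. This both shows that the image of $\Tor_\varphi$ is closed under the coproducts of $\Tor_B$ and that these restrict on the image to the coproducts of $\Tor_A$, giving the desired identification as a sub-$A_\infty$-coalgebra.

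The principal obstacle is the local non-interference claim, namely that $m$ cannot occur as a subword of any $A$-chain nor of any bar entry $u_i$ in the representative of such a chain. This is the combinatorial core of the statement and is what the non-redundancy hypothesis must be interpreted to guarantee; once this is secured, everything else is a mechanical matching of decomposition sets through the explicit formula.
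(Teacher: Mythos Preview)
The paper offers no proof here: the proposition is stated without argument and prefaced only by the remark that it records the locality of the $A_\infty$-structure in a given chain. Your route through the explicit decomposition formula of Section~\ref{sec3:mainres} is therefore not what the author invokes, but it is the natural way to make ``locality'' precise, and your reduction to the non-interference claim is exactly the right diagnosis of where the content lies.

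The trouble is that this claim does not follow from non-redundancy, and the proposition as literally stated can fail. Take $A=\kk\langle a,b,c,d,e,f,g,h\rangle/(abcde,\,defgh)$ and adjoin $m=cdef$: the set $\{abcde,defgh,cdef\}$ is an antichain for divisibility, so $m$ is non-redundant in any reasonable sense. The bar term $[a\vv bcde\vv fgh]$ is an $A$-chain of length~$2$, and you are correct that no single entry $u_i$ contains $m$, since each $u_i$ is a proper suffix of an $A$-obstruction and $m\subseteq u_i$ would make that obstruction redundant. But the \emph{minimality} of $bcde\cdot fgh=0$ is lost in $B$, because $bcde\cdot f=bcdef$ already contains $cdef$; indeed $d_{BB}[a\vv bcde\vv f\vv gh]=[a\vv bcde\vv fgh]$, so this $A$-chain becomes a boundary and $\Tor_\varphi$ is not injective. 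What your argument actually requires is that $m$ avoid the underlying monomial of every $A$-chain, not merely every bar entry, and this is not a consequence of non-redundancy. The moral the paper intends---that $\Delta_n(\gamma)$ is determined by the subword combinatorics of $\gamma$ alone---is still visible in the formula of Theorem~\ref{thm:MRcop}, but the clean sub-$A_\infty$-coalgebra formulation needs a stronger hypothesis than the one stated.
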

\begin{proof}
\textcolor{comcol}{
Since $B$ is obtained from $A$ by adjoining a 
non-redundant monomial relation, the collection
of Anick chains for $B$ can be computed from
those of $A$ by adding a (possibly infinite) 
new collection of chains, and the map $\Tor_\varphi$ is injective since it is induced by
the inclusion map on Anick chains. To see that
this a strict map of $A_\infty$-coalgebras,
meaning that it induces on $\Tor_A$ the correct
higher coproducts, we note that we can arrange it
so that the contraction on the bar complex of $B$
onto $\Tor_A$ restricts to a contraction
for the bar complex of $A$: according to the
work of Jollenbeck--Welker, this datum can be
produced exclusively from the Anick chains
from $B$, and their procedure does not alter
the underlying monomial of a chain in the 
monomial case, and hence restricts to the 
bar complex of $A$. 
Finally, the higher coproducts are built from
the coproduct of the bar construction of $B$
and the contraction, and for each Anick chain,
this computation depends only on the underlying
chain, and not on the inclusion of $A$ into $B$.
} 
\end{proof}

\section{Description of the minimal model}\label{sec3:mainres}

We now aim to give a more refined description
of the terms appearing in a higher coproduct of a 
fixed chain $\gamma$, as stated in the following theorem.
It will follow immediately from Theorem~\ref{thm:MRmin} and
its proof. Unless stated otherwise, we are
working exclusively with monomial algebras in
what follows. 

\begin{theorem}\label{thm:MRcop}
Let $\gamma$ be a chain and $n\in\NN_{\geqslant 2}$. The terms that 
appear in $\Delta_n(\gamma)$ are exactly those of the form 
$\gamma_1\otimes\cdots\otimes\gamma_n$ with $(\gamma_1,\ldots,\gamma_n)$ a 
decomposition of $\gamma$. Moreover, if $\gamma_i$ is of
length $r_i$ for each $i\in \{1,\ldots,n\}$, the coefficient of 
$\gamma_1\otimes\cdots\gamma_n$ is $(-1)^N$ where
\[
 N = \binom{n+1}{2} +r_1+
 	\sum_{i=1}^{n-1} (n-i)(r_i+1).  \]  
\end{theorem}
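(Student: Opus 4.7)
My plan is to argue by induction on $n$, combining Theorem~\ref{thm:comb} (so that only the right comb contributes to $\Delta_n$ on a chain) with the recursive identity $\Delta_n' = (-1)^n (1 \otimes \Delta_{n-1}' h)\Delta_2'$ on $\Tor_A$ established in Section~\ref{ssec:proofs}.

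For the base case $n = 2$, one expands
\[
\Delta_2(\gamma) = p^{\otimes 2} \Delta_2'(\gamma) = \sum_{j=1}^{r} [u_0\vv\cdots\vv u_{j-1}] \otimes p[u_j\vv\cdots\vv u_r],
\]
where $\gamma = [u_0\vv u_1\vv\cdots\vv u_r]$. The left factor is a prefix of $\gamma$, hence a chain of length $r_1 = j-1$. By Lemma~\ref{lemma:handp}, the attached term $[u_j\vv\cdots\vv u_r]$ reshapes under $p$ to the canonical chain $\gamma_2$ with monomial $u_j\cdots u_r$ whenever such a chain of length $r - j$ exists, and gives zero otherwise. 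The nonzero terms are in bijection with decompositions $(\gamma_1,\gamma_2)$ of $\gamma$ with $r_1 + r_2 = r - 1$, and the sign formula evaluates to $N \equiv 0 \pmod 2$, matching the fact that neither deconcatenation nor the algorithm for $p$ contributes a sign.

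For the inductive step, fix $n \geqslant 3$, assume the statement for $n-1$, and write
\[
\Delta_n(\gamma) = (-1)^n \sum_{j=1}^{r} [u_0\vv\cdots\vv u_{j-1}] \otimes p^{\otimes (n-1)} \Delta_{n-1}' h[u_j\vv\cdots\vv u_r].
\]
The key structural claim to establish is that, for each $j$, the inner factor equals $\sum \pm \gamma_2 \otimes \cdots \otimes \gamma_n$ summed over tuples completing a decomposition of $\gamma$ whose first piece is $\gamma_1 = [u_0\vv\cdots\vv u_{j-1}]$. The intuition is that the algorithm of Lemma~\ref{lemma:handp} reshapes the tail $[u_j\vv\cdots\vv u_r]$ by extracting successive chain-prefixes and inserts one extra bar that compensates for the unique obstruction of $\gamma$ split at the boundary between $\gamma_1$ and $\gamma_2$; this explains why the lengths of the pieces in a decomposition sum to $r - 1$ rather than $r$. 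After this reshaping, $\Delta_{n-1}'$ deconcatenates, and the surviving terms under $p^{\otimes (n-1)}$ are exactly the tail-decompositions, which the inductive hypothesis identifies with $(n-1)$-fold decompositions.

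The sign for a surviving term combines the explicit $(-1)^n$ from the recursion, the signs $(-1)^{i_a+1}$ contributed by Lemma~\ref{lemma:handp} at each of the $n-1$ occurrences of $h$ in the right comb, and the inductive sign for the tail. A direct arithmetic using the length relation $r_1 + \cdots + r_n = r - 1$ together with $\binom{n+1}{2} = \binom{n}{2} + n$ collapses these contributions to the claimed $N = \binom{n+1}{2} + r_1 + \sum_{i=1}^{n-1}(n-i)(r_i+1)$. The hardest part of the argument will be showing that, among the many $\gamma^a$-terms produced by iterating $h$ together with all of their subsequent deconcatenations, exactly one combination contributes to each decomposition while every other intermediate bar term vanishes under $p^{\otimes (n-1)}$; this last vanishing relies on the fact that $p$, as a same-degree projection onto critical vertices in the Morse retract of Subsection~\ref{sec:AADMT}, is zero on bar terms whose consecutive entries do not multiply to zero.
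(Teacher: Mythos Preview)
Your inductive scheme has a genuine gap at the point where you invoke the hypothesis for $n-1$. The statement you are proving concerns $\Delta_{n-1}$ evaluated on a \emph{chain} in $\Tor_A$, but in the recursion you feed $\Delta_{n-1}'$ the element $h[u_j\vv\cdots\vv u_r]$. By Lemma~\ref{lemma:handp} each summand $\gamma^a$ of this element contains a bar $[\cdots\vv u_k'\vv u_k''\vv\cdots]$ with $u_k'u_k''=u_k\neq 0$: it is neither a chain nor even attached. Hence neither the inductive hypothesis nor Theorem~\ref{thm:comb} (whose proof uses $hi=0$ together with the exchange rule, valid only on attached terms) applies to it, and there is no reason the right-comb description of $\Delta_{n-1}'$ should persist on such inputs. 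Your closing paragraph already concedes this: the ``hardest part'' you defer---showing that exactly one chain of intermediate bar terms survives under $p^{\otimes(n-1)}$ for each decomposition---is not bookkeeping but the entire content of the theorem, and it cannot be obtained by quoting the $(n-1)$-case. (A small side issue: the right comb with $n$ leaves carries $n-2$ copies of $h$, not $n-1$.)

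The paper avoids this trap by running a different induction. It invokes Theorem~\ref{thm:comb} once, on $\gamma$ itself, to unfold $\Delta_n'$ as an iterate of $\Delta_2'h$, and then inducts on the depth $j$ of that iterate rather than on $n$. The bookkeeping device is the notion of a \emph{$j$-tail}: Lemma~\ref{lemma:shape} pins down the precise shape $[u_i''\vv u_{i+1}\vv\cdots\vv u_r]$ of the bar term occupying the last tensor slot after $j$ steps and proves that it arises from a unique $(j-1)$-tail via a unique summand of $\Delta_2'h$, using the bar-counting Lemma~\ref{lemma:bounds} to exclude spurious contributions. Proposition~\ref{prop:vewyimp} then packages this into the existence-and-uniqueness statement for each decomposition, and the sign is read off at the end (Theorem~\ref{thm:MRmin}). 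Your intuition about ``reshaping the tail'' is morally this argument, but carrying it out requires tracking the non-attached intermediate terms explicitly rather than appealing to a statement about chains.
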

\subsection{Combinatorics of chains and tails}

{{}}  Suppose that $\gamma = x_{i_1}\cdots x_{i_s}$ is an Anick 
chain, with associated interlaced sequences $\{(a_j),(b_j)\}$. 
We will say a variable $x_{i_s}$ is an overlapping variable if
$s\in [a_{j+1},b_j)$, and we will say that 
a bar is \new{inserted at $x_{i_s}$} if it is inserted immediately 
after it. A bar term obtained from $\gamma$ is \new{regular} if it is 
obtained by inserting bars at non-overlapping variables, and it is 
\new{coregular} if it is obtained by inserting bars at overlapping 
variables. It may happen that $a_{j+1}=b_j$, in which case we
agree that $x_{i_{a_{j+1}}}$ is both overlapping and non-overlapping. 
This always happens, for example, if $A$ is quadratic. 
The following figure illustrates our definitions for the $4$-chain
$[t\vv t^3\vv t\vv t^3\vv t]$ in $\kk\langle t\vv t^4\rangle$,
where white circles represent overlapping variables,  black ones
represent non-overlapping variables, the cross represents 
the only variable that is both overlapping and non-overlapping, 
and bars mark the obstructions that constitute the chain.

\vspace{1 em}
\begin{figure}[h]
\centering
\begin{tikzpicture}
\foreach \i in {1,4,6,8}
{
\node (\i) at (\i,0) {$\bullet$};
		}
\foreach \i in {2,3,7,9}
{
\node (\i) at (\i,0) {$\circ$};
		}
\node (5) at (5,0) {$\times$};
\node[inner sep=0pt] (m1) at (1,.5) {};
\node[inner sep=0pt] (m2) at (2,-.5) {};
\node[inner sep=0pt] (m3) at (4,.5) {};
\node[inner sep=0pt] (m4) at (5,.5) {};
\node[inner sep=0pt] (m4') at (5,-.5) {};
\node[inner sep=0pt] (m5) at (6,-.5) {};
\node[inner sep=0pt] (m6) at (8,.5) {};
\node[inner sep=0pt] (m7) at (9,-.5) {};
\draw[line width = 1pt] (m1) -- (m3);
\draw[line width = 1pt] (m2) -- (m4');
\draw[line width = 1pt] (m4) -- (m6);
\draw[line width = 1pt] (m5) -- (m7);
\end{tikzpicture}
\caption{Schematics of the chain $t^9$ in $\kk\langle t\vert t^4\rangle$.}
\end{figure}

\begin{lemma}\label{lemma:bounds}
Let $\gamma$ be a monomial which is an $r$-chain. Any (co)regular bar term obtained by inserting
\begin{tenumerate}
\item exactly $r$ bars into $\gamma$ is either attached and nonzero or is zero,
\item less than $r$ bars into $\gamma$ is zero, and
\item more than $r$ bars into $\gamma$ is not attached and nonzero or is zero.
\end{tenumerate}
\end{lemma}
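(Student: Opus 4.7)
The plan is to work entirely with a \emph{coverage} principle for the obstructions that constitute the chain $\gamma$. If $\gamma$ has interlaced sequences $\{(a_j), (b_j)\}_{j=1}^r$, I would say a bar inserted at position $k$ (i.e.\ immediately after $x_{i_k}$) \emph{straddles} the $j$-th obstruction when $k \in [a_j, b_j)$. The two facts that drive everything are:
\begin{itemize}
\item the bar term is nonzero in $BA$ iff every factor is a normal monomial, which holds iff every obstruction of $\gamma$ is straddled by at least one bar;
\item the bar term is attached iff the product $u_{i-1}u_i$ contains an obstruction as a subword for every $i$, which holds iff every bar straddles at least one obstruction.
\end{itemize}

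Once these two equivalences are in place, the proof reduces to a counting argument combined with the defining property of regular/coregular insertions. The first observation I would establish is that a bar inserted at a non-overlapping position $k$ straddles \emph{exactly one} obstruction: the half-open interval condition $k \notin [a_{j+1}, b_j)$ for every $j$ rules out $k$ lying in the strict intersection of two consecutive obstruction intervals. Dually, a bar at a strictly overlapping position straddles exactly two consecutive obstructions $j$ and $j+1$, and the borderline case $a_{j+1}=b_j$ is treated separately since it gives a position classified as both.

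With this, each part of the lemma becomes a pigeonhole-type check. For (ii), fewer than $r$ regular bars cover fewer than $r$ obstructions, so some obstruction is not straddled, and hence the factor containing it is zero in $A$, forcing the bar term to be zero; the coregular version follows by the analogous counting using the pairing structure of overlap regions. For (i), if an insertion of exactly $r$ bars yields a nonzero bar term, the counting forces each bar to straddle exactly the obstructions it must, and in particular every bar straddles at least one obstruction, so by the second equivalence the term is attached. For (iii), the core claim is that with strictly more than $r$ (co)regular bars on a nonzero bar term, some factor $u_i$ is forced to lie strictly inside a single obstruction $(a_j,b_j)$; I would then show that neither $u_{i-1}u_i$ nor $u_i u_{i+1}$ can contain an obstruction, because such a factor, together with either neighbor, cannot extend far enough to cover $[a_{j'},b_{j'}]$ for any $j'$ without contradicting the minimality built into Anick's definition of a chain.

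The main obstacle I expect is part (iii): unlike (i) and (ii), which are clean bookkeeping, one has to rule out that extra bars accidentally create attached pairs through \emph{other} obstructions overlapping the one causing the extra coverage. I would handle this by exploiting the uniqueness of the pair $(s=b_{n-1},\text{tail})$ noted just after the definition of a chain, which guarantees that the interior of an obstruction cannot coincidentally be part of another obstruction spanning the neighboring factors. This locality property of Anick chains is what ultimately separates the regular from the generic case and is the only place where more than combinatorial bookkeeping on intervals is required.
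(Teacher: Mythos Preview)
Your approach is genuinely different from the paper's: the paper argues by induction on $r$, peeling off the last obstruction and reducing to a shorter chain, whereas you propose a direct coverage argument via pigeonhole. For the \emph{regular} case your strategy is sound and arguably cleaner. The key observation---that every bar position lies in some $[a_j,b_j)$, and a non-overlapping position lies in exactly one---makes (ii) immediate, and (i) follows once you note that nonzero-ness forces a bijection between bars and obstructions, so that each bar's unique obstruction sits entirely inside the two adjacent factors. Your sketch of (iii) needs tightening: the assertion that neither $u_{i-1}u_i$ nor $u_iu_{i+1}$ contains an obstruction is not literally true, since $u_{i-1}$ may reach far enough left to swallow obstruction $j-1$; but then $u_{i-1}$ itself contains an obstruction, contradicting nonzero-ness. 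Chasing this back to the first bar straddling $j$ gives the result.

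There are, however, two genuine gaps. First, your second equivalence (``attached iff every bar straddles at least one obstruction'') is vacuous as written: since the intervals $[a_j,b_j)$ cover $[1,t)$, \emph{every} bar position straddles some obstruction, so the right-hand side is always true. What you actually need and use in (i) is that the obstruction straddled by a given bar is straddled by no other bar---this is what the bijection in the regular case provides, and it should be stated as such. Second, and more seriously, your claim that a strictly overlapping bar straddles \emph{exactly two} consecutive obstructions fails whenever $a_{j+2}\le b_j$ for some $j$: a bar at a position in $[a_{j+2},b_j)$ then straddles at least three obstructions, and nothing in the chain axioms rules this out (take for instance relations $abcd,\,bcde,\,cdef$). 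In that regime a single coregular bar can cover arbitrarily many obstructions, and the ``analogous counting using the pairing structure'' you allude to no longer delivers (ii) or (iii). The paper itself treats the coregular case only with the phrase ``analogous considerations apply'', so neither argument is fully spelled out; but your pigeonhole mechanism does not transfer without a substantially different count.
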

\begin{proof}
We prove this by induction on $r$. If $r=1$, then $\gamma$
is simply a monomial relation. Certainly inserting no bars
gives a bar term of degree one which is zero and, since there
are no overlapping variables to keep track of, inserting any
bar gives a regular bar term, which is certainly nonzero, and
inserting one more bar gives a non-attached term. Assume then
$r\geqslant 1$ and that our claim holds for $r$-chains, and
that we have an $(r+1)$-chain. We consider the three cases above
separately:
\begin{tenumerate}
\item \emph{We have inserted $r+1$ bars regularly:} if the bar
term is zero, we are done. Else the bar term obtained in 
nonzero, and there must be at least one bar inserted in a 
non-overlapping variable of the last chain. Moreover, there
must be exactly one, else, by removing the tail of the $r+1$
chain, we would obtain a regular bar term from an $r$-chain
which is nonzero but has $r-1$ bars, which cannot happen. Having
settled this, we now remove the tail and proceed by induction.
\item \emph{We have inserted less than $r+1$ bars regularly:}
if no bar has been inserted on non-overlapping variables of
the last monomial relation, we are done. Else, there is one
variable inserted there. Removing the tail now gives a regular
bar term obtained from an $r$-chain were less than $r$ bars
have been inserted, and induction does the rest.
\item \emph{We have inserted more than $r+1$ bars regularly:}
if two or more bars have been inserted in non-overlapping 
variables of the last monomial relation, we get a zero term, 
since removing the tail gives a bar term where at most $r-1$ 
bars have been inserted regularly into an $r$-chain. If there
is exactly one bar in the tail, we may remove it and proceed
inductively.  
\end{tenumerate}
Analogous considerations apply to coregular terms.
\end{proof}

{{}} We now note that the homotopy $h$, which introduces
and shifts bars in bar terms, produces bar terms whose subchains, 
starting from the left, have bars introduced regularly.

\begin{lemma}
If $\gamma$ is an element of $\Tor_A^{r+1}$ corresponding to an 
$r$-chain, it has its $r$ bars inserted regularly. In particular,
if $\gamma$ is an attached term, and if $\gamma^a$ is a
nonzero summand in $h(\gamma)$, following the notation of
Lemma~\ref{lemma:handp}, then for $j\leqslant i_a$, the $j$-chain 
$(\gamma^a)_{(j+1)}$ has its $j$ bars inserted regularly.
\end{lemma}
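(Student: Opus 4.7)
My plan is to derive both assertions from a direct analysis of the bar representation of a chain, relating it to the interlaced sequences $(a_j),(b_j)$ that describe the chain's obstructions. First I would fix the chain $\gamma$ and use its critical bar form $[u_0\vv u_1\vv\cdots\vv u_r]$, where $u_0=x_i$ is a variable and each consecutive product $u_{k-1}u_k=0$ minimally, as recorded for monomial algebras in Lemma~\ref{lemma:edges}. The $k$-th obstruction $m_k$, by the minimality of $u_{k-1}u_k=0$, is then a suffix of $u_{k-1}u_k$ ending at the last letter of $u_k$; hence $b_k=|u_0u_1\cdots u_k|$ and consequently $a_k=b_k-|m_k|+1$. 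The $r$ bars of $\gamma$ therefore sit at positions $|u_0|=1=a_1$ and $b_1,b_2,\ldots,b_{r-1}$.

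The next step is to verify that none of these positions lies in any overlap $[a_{j+1},b_j)$ with $j\in\{1,\ldots,r-1\}$. The bar at position $1$ is strictly to the left of every $a_{j+1}\geqslant 2$, and for the bar at $b_{k-1}$ the only two a priori problematic cases are $j=k-1$, where $b_{k-1}$ coincides with the excluded right endpoint of $[a_k,b_{k-1})$, and $j=k$. For the latter I will compute
\[
a_{k+1}-b_{k-1}=|u_k|+|u_{k+1}|-|m_{k+1}|+1
\]
and use that $m_{k+1}$ sits inside the product $u_ku_{k+1}$, so $|m_{k+1}|\leqslant|u_k|+|u_{k+1}|$ and therefore $a_{k+1}>b_{k-1}$. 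For $j<k-1$ or $j>k$ the overlap $[a_{j+1},b_j)$ is entirely to the left or entirely to the right of $b_{k-1}$, by strict monotonicity of $(a_j)$ and of $(b_j)$ combined with the inequality just derived. This completes the first assertion.

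For the second assertion I would invoke Lemma~\ref{lemma:handp}: by construction of the homotopy, $\eta^a$ is an $i_a$-chain, and for $j\leqslant i_a$ the deconcatenation $(\gamma^a)_{(j+1)}$ consists of the first $j+1$ entries of $\gamma^a$, which form an initial segment of the critical bar form of $\eta^a$. In the monomial setting the defining property of a critical vertex (first entry a variable, every consecutive product zero minimally) is inherited by any such initial segment, so $(\gamma^a)_{(j+1)}$ is itself a $j$-chain, and applying the first assertion finishes the proof.

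The whole argument is combinatorial bookkeeping and I do not expect any significant obstacle; the one mildly substantive ingredient is the inequality $a_{k+1}>b_{k-1}$, which holds precisely because the $(k+1)$-th obstruction $m_{k+1}$ fits inside two consecutive bar entries $u_k,u_{k+1}$.
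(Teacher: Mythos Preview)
Your proof is correct and follows the same approach as the paper's: identify the bar positions with $1,b_1,\ldots,b_{r-1}$ via the correspondence between Anick's recursive description of chains and the critical bar form, and then observe these positions avoid the overlap intervals $[a_{j+1},b_j)$. The paper's argument is essentially a one-line assertion of this fact, while you supply the actual verification---in particular the inequality $a_{k+1}>b_{k-1}$, which is the only nontrivial point and which the paper leaves implicit. Your treatment of the second assertion (reducing it to the first via the observation that initial segments of a critical vertex are again critical, hence chains) is also more explicit than the paper's, which does not separately address it.

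One small remark on your write-up: when you say ``the $k$-th obstruction $m_k$ \ldots\ is a suffix of $u_{k-1}u_k$'', you are implicitly using that the obstruction witnessing $u_{k-1}u_k=0$ minimally coincides with the $k$-th obstruction of the Anick interlaced sequence. This is true because obstructions form an antichain (so at most one ends at position $b_k$) and because minimality forces that obstruction to lie entirely inside $u_{k-1}u_k$; you might make this identification explicit, since it is what prevents the argument for $|m_{k+1}|\leqslant |u_ku_{k+1}|$ from being circular.
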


\begin{proof}
The insertion of bars follows
Anick's interlaced sequence associated to a chain in such a
way that we inserts bars at variables $x_{i_1},x_{i_{b_1}}, 
\ldots,x_{i_{b_{r-1}}}$ which are not overlapping, since the
overlapping variables are precisely at the half-open intervals 
$[{a_j},{b_{j-1}})$ for $j\in \{2,\ldots,r-1\}$. 
\end{proof}

{{}} Let us now introduce the definitions that will be central to our proof of 
Theorems~\ref{thm:MRcop} and its equivalent formulation, 
Theorem~\ref{thm:MRmin}, which we already stated the Introduction. 
Let $\gamma$ be an $r$-chain and $j\in\NN$.
 We will say a bar term $\Gamma$ is a \new{$j$-tail of
$\gamma$} if there is a term of the form $\gamma_1\otimes
\cdots \otimes \gamma_{j}\otimes \Gamma$ in $\Delta_{j+1}'(\gamma)$
appearing with nonzero coefficient, where the first $j$
tensors are chains, and, moreover, $\Gamma$ is a concatenation
of at least two chains $\gamma_{j+1},\ldots,\gamma_n$, in this
order. Moreover, if for $i\in [n]$ we have that $\gamma_i$
is an $r_i$ chain, we require that $r_1+\cdots+r_n=r-1$. The 
\new{length} of $\Gamma$ is $n-j$.  Let us call the $n$-tuple
$(\gamma_1,\ldots,\gamma_n)$ a \new{decomposition} of $\gamma$.
Remark that there is the notion of ``tail'' of a chain given in 
\cite{Anick}, but that this is not a special case of our definition, and that $\Gamma$
may be a tail for several choices of the
tuple $(\gamma_{j+1},\ldots,\gamma_n)$. 

{{}} We continue by observing that $j$-tails are 
obtained by cutting a chain in the form of a bar term
either at a bar or at some place between bars. 

\begin{lemma}\label{lemma:shape}
Fix $j\in\NN$ and suppose that $\gamma = [u_0\vv u_1\vv
\cdots\vv u_r]$ is an $r$-chain, and that $\Gamma$ is a
$j$-tail of $\gamma$, with first chain $\gamma_{j+1}$. 
Then there exists $i\in \{1,\ldots,r\}$ and a decomposition
$u_i = u_i'u_i''$ such that $u_i''\neq 1$, $u_i''u_{i+1}=0$
minimally and $\Gamma = [u_i''\vv\cdots\vv u_r]$. Moreover:
\begin{tenumerate}
\item This decomposition is nontrivial whenever $j>1$
\item The tail $\Gamma$ contains exactly $r_{j+1}+\cdots + r_n$ bars.
\item There is a unique $(j-1)$-tail $\Gamma'$ and a unique term in
$\Delta_2'h(\Gamma')$ of the form $\gamma_j\otimes \Gamma$ that 
gives rise to $\Gamma$, and it appears with a sign as a coefficient. 
\end{tenumerate}
\end{lemma}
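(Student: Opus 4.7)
The plan is to induct on $j$, combining three ingredients from Section~\ref{sec2:homcomb}: the recursive formula $\Delta_n' = (-1)^n(1\otimes\Delta_{n-1}'h)\Delta_2'$ on $\Tor_A$ established earlier, the exchange rule of Lemma~\ref{lemma:exchange} trading $\Delta_2'h$ for $(h\otimes 1)\Delta_2'$ modulo $\Tor_A\otimes BA$ on attached terms, and the explicit description of $h$ given by Lemma~\ref{lemma:handp}. Heuristically, a $j$-tail arises from $j$ successive ``cut and extend'' operations on $\gamma$: each $\Delta_2'$ deconcatenates at a bar, and each intermediate $h$ inserts a new bar immediately after the maximal chain prefix of the current bar term, i.e.\ strictly inside some $u_i$. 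The base case $j=1$ is then immediate: $\Gamma$ is the right factor of a summand $\gamma_1\otimes\Gamma$ of $\Delta_2'(\gamma)$ with $\gamma_1$ a chain, so $\Gamma=[u_k\vv\cdots\vv u_r]$ for some $k$, matching the statement with $i=k$, $u_i'=1$ and $u_i''=u_i$; the required minimality is built into any Anick chain.

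For the inductive step, a $j$-tail $\Gamma$ appears in $\Delta_{j+1}'(\gamma)$ as the last tensor of a summand $\gamma_1\otimes\Delta_j'(h(\beta))$, with $\gamma_1\otimes\beta$ occurring in $\Delta_2'(\gamma)$. I will unfold the recursion one more step for $\Delta_j'$ and invoke the exchange rule to trade $\Delta_2'h$ for $(h\otimes 1)\Delta_2'$, so that the final cut producing $\Gamma$ is effected by a $\Delta_2'$ applied to a bar term that $h$ has just modified. Lemma~\ref{lemma:handp} then forces this cut to land immediately after the maximal chain prefix of that bar term, which ends strictly inside some $u_i$: this both yields the claimed form of $\Gamma$ and shows $u_i'\neq 1$, establishing~(i). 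The minimality of $u_i''u_{i+1}=0$ is inherited from $\gamma$ being a chain together with the definition of the cut point in Lemma~\ref{lemma:handp}, and part~(ii) then follows by a bookkeeping count of the bars in $\Gamma$ using the normalization $r_1+\cdots+r_n = r-1$ of a decomposition.

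Part~(iii) is where the main technical difficulty lies. Given $\Gamma$ and the chain $\gamma_j$ of the decomposition immediately to its left, the candidate $(j-1)$-tail $\Gamma'$ is reconstructed by re-fusing the final portion of $\gamma_j$ back onto the head $u_i''$ of $\Gamma$. The crux is to verify that the resulting bar term is attached, that its maximal chain prefix computed via Lemma~\ref{lemma:handp} is exactly $\gamma_j$, and that $\Delta_2'h(\Gamma')$ consequently contains $\gamma_j\otimes\Gamma$ as a single summand whose sign is prescribed by the formula for $h$. This rests critically on the minimality condition $u_i''u_{i+1}=0$ and on the uniqueness of the interlaced sequence defining an Anick chain, both of which prevent $h$ from peeling off a longer or shorter chain than $\gamma_j$. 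Uniqueness of $\Gamma'$ among all candidate $(j-1)$-tails producing $\Gamma$ then follows from the inductive hypothesis applied to $\Gamma'$, closing the induction.
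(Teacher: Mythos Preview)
Your induction-on-$j$ strategy matches the paper's, but two steps do not go through as written. First, invoking the exchange rule works against you: Lemma~\ref{lemma:exchange} says $\Delta_2'h \equiv (h\otimes 1)\Delta_2'$ \emph{modulo} $\Tor_A\otimes BA$ on attached terms, and the summands you want---those with a chain $\gamma_j$ on the left---are exactly the ones lying in $\Tor_A\otimes BA$, i.e.\ the ones the exchange rule throws away. That lemma is what feeds into Theorem~\ref{thm:comb}; it cannot isolate the chain-on-left pieces of $\Delta_2'h(\Gamma')$. The paper instead works directly with the explicit summands of $h(\Gamma')$ supplied by Lemma~\ref{lemma:handp}, in a case analysis on whether the leading segment $u_i''$ of the $(j-1)$-tail is a single variable or not.

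Second, and more seriously, you never invoke Lemma~\ref{lemma:bounds}, and without it neither (i) nor the minimality of $u_i''u_{i+1}=0$ follows. These are not formal consequences of the description of $h$. The paper argues by contradiction via bar counting: if the split $u_i=u_i'u_i''$ were trivial, the resulting tail would underlie $\gamma_{j+1}\cdots\gamma_n$ with one fewer bar than required; since the bar structure each $\gamma_k$ inherits inside $\Gamma$ is coregular (it is interlaced with that of $\gamma$), one peels the $\gamma_k$ off one by one, removing exactly $r_k$ bars each time by Lemma~\ref{lemma:bounds}, and arrives at an $r_n$-chain carrying fewer than $r_n$ coregular bars---a contradiction with that same lemma. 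The same counting gives minimality. Your ``inherited from $\gamma$ being a chain'' does not supply this. For (iii) the paper also proceeds forward from $\Gamma'$ rather than reconstructing it from $\Gamma$: it traces the algorithm of~\ref{par:handp} through $h(\Gamma')$, showing that the Anick bar structure of $\gamma_j$ is interlaced inside that of $\gamma$, and then rules out the non-final summands of $h(\Gamma')$ because cutting them anywhere but at the last opened bar leaves a non-attached block $[\cdots\vv u_t'\vv u_t''\vv\cdots]$ in one tensor factor.
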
 

\begin{proof}
The case that $j=1$ and $n$ is arbitrary is obvious, so let us assume 
$j>1$, our claim true for $(j-1)$-tails, and analyse the claim for $j$. 

Observe that by Theorem~\ref{thm:comb}, 
if $\Gamma$ is a $j$-tail of $\gamma$, it must come from a $(j-1)$-tail
$\Gamma'$ of $\gamma$ by applying the operator $\Delta_2'h$ on the last
factor. We will prove that $\Gamma$ has the desired form and, moreover,
that there is a unique way to obtain $\Gamma$ from $\Gamma'$, so that
if the term corresponding to $\Gamma'$ appears with coefficient $1$ or 
$-1$, then so does the term corresponding to $\Gamma$.
 The description of $h$ from Lemma~\ref{lemma:handp} and the 
inductive hypothesis applied to $\Gamma'$ means that $\Gamma' = 
[u_i''\vv u_{i+1} \vv\cdots\vv u_r]$ with $u_i''u_{i+1}=0$ minimally, 
or $\Gamma'$ has no bars. In the latter case the chain $\gamma_{j+2}$ 
is a variable and then $\Gamma$ is obtained by removing this variable:
we have that $h(\Gamma') = [\gamma_{j+1}\mid\cdots ]$ and we obtain
$\Gamma$ uniquely from $\Gamma'$. 

Let us then consider the case $\Gamma'$ has bars, so
that it
contains $r-1-(r_1+\cdots+r_j)$ bars by induction, and its first
terms overlap minimally. We can certainly find some $k>i$ and a 
decomposition $u_k=u_k'u_k''$ in such a way that the underlying 
monomial of the bar term $[u_i''\vv u_{i+1}\vv \cdots \vv u_{k-1}
\vv u_k']$ is precisely $\gamma_j$. Observe that the bar 
structure of $\gamma_j$ is coregular, for it interlaces with that of $\gamma$. 
By Lemma~\ref{lemma:bounds} there are exactly $r_j$ bars in such term, so that $k = i+r_j$. We now analyse two cases.

\emph{Case 1: $u_i''$ is a variable.} In such case, it follows that 
$[u_i''\vv u_{i+1}\vv \cdots \vv u_{i+r_j}']$ is a honest chain
belonging to $\Tor_A$. We claim that the decomposition $u_k=u_k'u_k''$
is non-trivial, that $u_k''u_{k+1}=0$ minimally and that $h(\Gamma')$ 
is, up to signs, equal to the bar term 
\[ [u_i''\vv u_{i+1}\vv \cdots \vv  u_{i+r_j}'
\vv u_k'' \vv u_{k+1} \vv \cdots \vv u_r]\] which means, of course,
that the description of $\Gamma$ is the correct one. 
Indeed, note that if the decomposition were trivial,
we would have a sequence of chains $\gamma_{j+1}\cdots\gamma_n$ 
underlying a $(j+1)$-tail with less than $r_{j+1}+\cdots+r_n$ bars.
As before, the starting chain $\gamma_{j+1}$ appears with bars
inserted coregularly, so we may remove it along with exactly $r_{j+1}$
bars. Repeating this argument, we end up with a coregular bar term  
underlying an $r_n$-chain with less than $r_n$ bars, which contradicts
Lemma~\ref{lemma:bounds}. 
To see that $u_k''u_{k+1}=0$, note that otherwise we again
would have a bar term $[u_k''u_{k+1}\vv\cdots \vv u_r]$
whose underlying monomial has $r_{j+1}+\cdots+r_n-1$ bars.
The fact that the overlap $u_k''u_{k+1}$ is minimal 
follows from the fact that overlap $u_ku_{k+1}$ is minimal. 
The description of $h(\Gamma')$ shows
that $\Gamma$ is obtained uniquely from $\Gamma'$,
possibly with a sign. 

\emph{Case 2: $u_i''$ is not a variable.} 
Arguing as before, we see that the overlap $u_k = u_k'u_k''$ 
is not trivial, and that $u_k''u_{k+1}=0$ minimally. 
We can write $u_i'' = xv$ were $x$ is a variable and $v$ a 
monomial, and we have that $h(\Gamma')$ has first term 
$[x\vv v\vv u_{i+1} \vv\cdots\vv u_r]$. Set $j_*$ to be the
last $k \geqslant j$ for which $\gamma_k$ is a $0$-chain.
Our bar counting argument then shows that the concatenation
$\gamma_j \cdots \gamma_{j_*}$ must be contained in the
monomial $u_i''$, and then using our description of $h$
it is clear we may extract the term $\gamma_j \otimes \cdots 
\otimes \gamma_{j_*}$ uniquely by iteration of $\Delta_2 h$.
Let us assume then that $\gamma_j$ is not a $0$-chain.
In such case, $vu_{i+1}\neq 0$, since
$\gamma_j$ begins with a minimal monomial relation which,
by minimality, must in fact be $xvu_{i+1}$. 
Since $u_i''u_{i+1}$ is a minimal monomial relation of $A$, it 
follows that $[x\vv vu_{i+1}\vv\cdots\vv u_k']$ begins with
the initial $1$-chain from $\gamma_j$, so that if $\gamma_j$
is a $1$-chain, we are done: this term is of the form 
$[x\vv v u_k']$. Else, we can find the initial $2$-chain
of $\gamma$ in the form $[x\vv vu_{i+1} \vv u_{i+2}']$:
since $u_{i+1}u_{i+2}$ is a minimal monomial relation of 
$A$, the second monomial relation of $\gamma_j$ must be 
contained in a monomial of the form $vu_{i+1}u_{i+1}'$ where
$u_{i+1}'$ is a proper initial divisor of $u_{i+1}$. 
Continuing this way, we see the Anick structure of $\gamma_j$
is interlaced inside that of $\gamma$, and that the last term
in $h(\Gamma')$ is $[x\vv vu_{i+1}\vv\cdots\vv
u_k'\vv u_k''\vv u_{k+1} \vv \cdots\vv u_r]$, proving the 
description of $\Gamma$ is the correct one. 

We also observe 
that the summands of $h(\Gamma')$ different from this one
cannot create a summand corresponding to $\Gamma$ 
so that again $\Gamma$ is 
obtained uniquely from $\Gamma'$. Indeed, the only way to produce
a bar term in the left factor with the same underlying monomial
as $\gamma_j$, we would have to use $\Delta_2'$ to break such a 
term of $h(\Gamma')$ precisely at the bar dividing $u_k'$ and $u_k''$, 
presently only on the last term. If we do it at a bar before or 
after this one, the resulting term has either its left factor or 
its right factor non-attached, since it contains $[\cdots\vv u_k'
\vv u_k''\vv\cdots]$. This same argument shows that the previous
summands of $\Delta_2'h(\Gamma')$ cannot contribute to 
$\Delta_{j+1}$: the only place where we may break them is at the
last opened bar, say $[\cdots\vv u_t'\vv u_t''\vv \cdots]$, but the
fact we can continue the algorithm of \ref{par:handp} means that
$u_t''$ has nonzero product with $u_{t+1}$, and hence this
term does not contribute to $\Delta_{j+1}$. 

The final claim regarding the number of bars in $\Gamma$
is immediate from the above.
\end{proof}
 
\noindent The following proposition is the central result about
tails and chains we were after.  
 
\begin{prop}\label{prop:vewyimp}
Let $\gamma$ be a chain, $n\in\NN_{\geqslant 2}$ and let
$(\gamma_1,\ldots,\gamma_n)$ be a decomposition of $\gamma$. 
For each $j\in [n-1]$ there is a unique $j$-tail $\Gamma$ of 
$\gamma$ with underlying monomial $\gamma_{j+1}\cdots \gamma_n$
and a unique term $\gamma_1\otimes\cdots\otimes \gamma_j\otimes \Gamma$ 
in $\Delta_{j+1}'(\gamma)$, and it appears with coefficient $1$ or
$-1$.
\end{prop}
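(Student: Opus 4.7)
The plan is to induct on $j \in \{1, \ldots, n-1\}$, leaning heavily on Lemma~\ref{lemma:shape}(iii), which already contains the inductive heart of the argument.

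For the base case $j = 1$, I would use that $\Delta_2'$ is the deconcatenation coproduct, so $\Delta_2'[u_0\vv\cdots\vv u_r] = \sum_i [u_0\vv\cdots\vv u_i] \otimes [u_{i+1}\vv\cdots\vv u_r]$. Given the decomposition $(\gamma_1, \ldots, \gamma_n)$, the first piece $\gamma_1$ of length $r_1$ must coincide, as a bar term, with the prefix $[u_0\vv\cdots\vv u_{r_1}]$ of $\gamma$: its $r_1$ obstructions are identified with the first $r_1$ obstructions of the interlaced Anick structure of $\gamma$, so it terminates precisely at position $b_{r_1}$, a bar. The complementary summand $\Gamma = [u_{r_1+1}\vv\cdots\vv u_r]$ is then the unique $1$-tail with underlying monomial $\gamma_2\cdots\gamma_n$ and appears with coefficient $+1$.

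For the inductive step, I would use the recursion $\Delta_{j+1}' = (-1)^{j+1}(1\otimes\Delta_j'h)\Delta_2'$, valid on $\Tor_A$ as a consequence of the right-comb Theorem~\ref{thm:comb} and of $hi = 0$. This means that any term of $\Delta_{j+1}'(\gamma)$ of the prescribed form $\gamma_1\otimes\cdots\otimes\gamma_j\otimes\Gamma$ arises from a term $\gamma_1\otimes\cdots\otimes\gamma_{j-1}\otimes\Gamma'$ of $\Delta_j'(\gamma)$ by applying $\Delta_2'h$ to the last factor and selecting the summand beginning with $\gamma_j$. The inductive hypothesis supplies a unique such $\Gamma'$, namely the $(j-1)$-tail with underlying monomial $\gamma_j\cdots\gamma_n$, appearing with coefficient $\pm 1$. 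Lemma~\ref{lemma:shape}(iii), applied to the prospective $j$-tail $\Gamma$ with underlying monomial $\gamma_{j+1}\cdots\gamma_n$, then both confirms $\Gamma'$ as the unique predecessor and isolates a unique summand $\gamma_j\otimes\Gamma$ of $\Delta_2'h(\Gamma')$ with the correct underlying monomial, again with a sign. Composing signs yields the coefficient $\pm 1$ in $\Delta_{j+1}'(\gamma)$.

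The main obstacle is the combinatorial book-keeping tying the abstract datum of a decomposition to the concrete bar structure of $\gamma$: one must verify in the base case that $\gamma_1$ cannot end in the middle of some monomial $u_i$ (this is the $j=1$ instance of Lemma~\ref{lemma:shape}, where the internal decomposition $u_i = u_i'u_i''$ is forced to be trivial), and in the inductive step one must check that the $(j-1)$-tail supplied by the inductive hypothesis really is the unique predecessor $\Gamma'$ prescribed by Lemma~\ref{lemma:shape}(iii), rather than some competing lift. Once these identifications are secured, the remainder of the argument is transport of signs along the iterated application of $\Delta_2'h$, which has already been performed in the proof of Lemma~\ref{lemma:shape}.
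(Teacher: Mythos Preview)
Your proposal is correct and follows essentially the same route as the paper: induction on $j$, with the base case handled by the deconcatenation formula for $\Delta_2'$ and the inductive step delegated to Lemma~\ref{lemma:shape}(iii). The paper's own proof is considerably terser (the base case is declared ``obvious'' and the inductive step is a one-line appeal to Lemma~\ref{lemma:shape}), but the logical skeleton is identical; one small cosmetic point is that the recursion you cite, $\Delta_{j+1}' = (-1)^{j+1}(1\otimes\Delta_j'h)\Delta_2'$, peels from the root, whereas the form you actually use in the inductive step, $(1^{\otimes j-1}\otimes\Delta_2'h)\Delta_j'$, peels from the last factor---both are equivalent consequences of the right-comb Theorem~\ref{thm:comb}, so this does not affect the argument.
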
 
 
\begin{proof}
Let $(\gamma_1,\ldots,\gamma_n)$ be a decomposition of $\gamma$
and let $\Gamma$ be a $j$-tail as in the statement of the Theorem.
The claim is obvious for $j=1$. Moreover, Lemma~\ref{lemma:shape} 
shows that once we know that a $(j-1)$-tail $\Gamma'$ corresponding
to this decomposition of appears in $\Delta_j'(\gamma)$, there is a
unique summand in $\Delta_2' h(\Gamma')$, with coefficient $1$ or
$-1$, that produces the term corresponding to $\Gamma$, which is
what we wanted. 
\end{proof}

{{}} Remark that the operators $(\Delta_j')_{j\geqslant 2}$
produce other terms than the ones described in the last proposition.
However, the proof of Lemma~\ref{lemma:shape} shows these terms 
have zero projection to tensor powers of $\Tor_A$, since they contain 
factors that are not attached.

\subsection{Main theorem} 

{{}} We now recall the promised description of the minimal model 
of a monomial algebra $A$. It follows immediately from Proposition
\ref{prop:vewyimp} and Lemma~\ref{lemma:handp}, which in particular
describes the signs appearing in the homotopy $h$. 

\begin{theorem}\label{thm:MRmin}
For each monomial algebra $A$ there is a minimal model
$B\longrightarrow A$ where $B= \Omega_\infty\!
\Tor_A$, and for a chain $\gamma\in\Tor_A$ the differential $d$ acts by
\[ d\gamma  = -\sum_{n\geqslant 2}
	(-1)^{\binom{n+1}{2}+|\gamma_1|} 
	 	 \gamma_1 \cdots \gamma_n, 
	 		\]
where the sum ranges through all possible decompositions of $\gamma$.
\end{theorem}

\begin{proof}
We need only address the claim about signs and the differential
$b$. We already know
that whenever $\Delta_2h$ extracts an $r$-chain, it produces
a sign $(-1)^{r+1}$. Moreover, whenever $h$ goes through
an $r$-chain $\gamma$ it produces a sign $(-1)^{r+1}$. Thus when
creating the term $\gamma_1\otimes\cdots\otimes \gamma_n$ by extracting
$\gamma_{n-1}$, we have a sign $(-1)^L$ where $L = \sum_{i=1}^{n-1}(r_i+1)$. Inductively accounting for the signs created by 
$\Delta_3,\ldots,\Delta_{n-1}$, for the missing sign $r_1+1$
that is \emph{not} created by $\Delta_2$ and for the sign
given by \ref{thm:treesdesc}, we obtain a sign congruent to
\[ \binom{n+1}{2} +r_1 + \sum_{i=1}^{n-1} (n-i)(r_i+1)\pmod{2} ,\]
 which is the integer $N$ in Theorem~\ref{thm:MRcop}. To see the claim 
 about the minimal model, we observe that 
 $ (s^{-1})^{\otimes n} (\gamma_1\otimes\cdots\otimes\gamma_n) = 
  (-1)^M s^{-1}\gamma_1\otimes\cdots\otimes s^{-1}\gamma_n$
  where $M$ is $\sum_{i=1}^{n-1} (n-i)(r_i+1)$, giving the final
  result. 
\end{proof}

{{}}  The canonical identification of $\Ext_A := \Ext_A(\kk,\kk)$
as $\Tor_A^\vee$ gives us a result dual to Theorem~\ref{thm:MRcop}
about the $A_\infty$-algebra structure on $\Ext_A$. Remark that it is quite
crucial to have done all the work with $A_\infty$-coalgebras and then
dualizing to $A_\infty$-algebras, and not otherwise,
since not every $A_\infty$-algebra is dualizable; see \cite{BriBen}*{\S 2.2}.
It is important, however, to pay attention to the Koszul signs arising from
the natural maps $D^n : \Ext_A^{\otimes n}\longrightarrow (\Tor_A^{\otimes n})^\vee$
for $n\in\NN$: if $f_1\otimes\cdots\otimes f_n$ is an element in the domain, and if
we pick $c_1\otimes\cdots\otimes c_n\in \Tor_A^{\otimes n}$, then
\[ D^n(f_1\otimes\cdots\otimes f_n)(c_1\otimes\cdots\otimes c_n) =
			(-1)^N f_1(c_1)\otimes\cdots\otimes f_n(c_n) \]
where $N = \sum_{i=2}^n (|c_1|+\cdots+|c_{i-1}|)|f_i|$. Observe that if $f: 
V\longrightarrow W$ is a map between complexes, then $f^\vee(\varphi) = 
(-1)^{|f||\varphi|} \varphi f$, which explains the introduction of signs
in the higher products of the graded dual $\Ext_A$ of $\Tor_A$. Concretely, for each $n\in\NN_{\geqslant 2}$, define 
$\mu_n : \Ext_A^{\otimes n} \longrightarrow \Ext_A$ by
$ \mu_n(\varphi_1\otimes \cdots \otimes\varphi_n) =
	(-1)^{n(|\varphi_1|+ \cdots +|\varphi_n|)}
	D^n(\varphi_1\otimes \cdots \otimes\varphi_n)\Delta_n$.
Let us say an $A_\infty$-algebra structure on $\Ext_A$ is \emph{canonical}
if it is $A_\infty$-quasi-isomorphic to the dga algebra $BA^\vee$. We
have the following result.

\begin{theorem}\label{thm:Ext}
There is a canonical $A_\infty$-algebra structure on $\Ext_A$ given as follows.
If $n\in\NN_{\geqslant 2}$ and if $\gamma_1^\vee,\ldots,\gamma_n^\vee$ 
are chains in $\Ext_A$ of lengths $r_1,\ldots,r_n$, respectively, then 
$\mu_n(\gamma_1^\vee \otimes\cdots\otimes\gamma_n^\vee)=(-1)^M\gamma^\vee$ 
if the concatenation $\gamma = \gamma_1\cdots\gamma_n$ is a chain of length 
$r=r_1+\cdots+r_n+1$ where $M$ is the integer
$\binom{n+1}2-1+ \sum_{i<j} r_i(r_j+1) + r_1+r$. Otherwise,
this higher product is zero. \qed
\end{theorem}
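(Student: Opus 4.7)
The plan is to deduce Theorem~\ref{thm:Ext} directly by dualizing the $A_\infty$-coalgebra structure on $\Tor_A$ described in Theorem~\ref{thm:MRcop}. By the definition of $\mu_n$, evaluating $\mu_n(\gamma_1^\vee \otimes \cdots \otimes \gamma_n^\vee)$ on a chain $\gamma$ amounts, up to a global sign, to pairing the tensor $\gamma_1^\vee \otimes \cdots \otimes \gamma_n^\vee$ with $\Delta_n(\gamma)$ via $D^n$. Theorem~\ref{thm:MRcop} tells us that $\Delta_n(\gamma)$ is a signed sum over decompositions of $\gamma$ into $n$ chains; so the pairing vanishes unless $(\gamma_1, \ldots, \gamma_n)$ is a decomposition of some chain, and in that case exactly one term survives, corresponding to the chain $\gamma = \gamma_1 \cdots \gamma_n$. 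This immediately gives the support statement: $\mu_n$ is nonzero precisely when the concatenation is a chain of length $r = r_1 + \cdots + r_n + 1$.

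The heart of the argument is the sign bookkeeping. I would organize the contributions as three summands modulo $2$: first, the sign $(-1)^{N}$ attached to the decomposition $(\gamma_1, \ldots, \gamma_n)$ of $\gamma$ in $\Delta_n(\gamma)$, with $N = \binom{n+1}{2} + r_1 + \sum_{i=1}^{n-1}(n-i)(r_i+1)$, as given in Theorem~\ref{thm:MRcop}; second, the Koszul sign $N' = \sum_{i=2}^{n}(|\gamma_1|+\cdots+|\gamma_{i-1}|)|\gamma_i^\vee|$ introduced by $D^n$, which under the identification $|\gamma_i^\vee| = |\gamma_i| = r_i + 1$ equals $\sum_{i<j}(r_i+1)(r_j+1)$; and third, the prefactor $(-1)^{n(|\gamma_1^\vee|+\cdots+|\gamma_n^\vee|)}$ built into the definition of $\mu_n$, which reduces modulo $2$ to $nr$ since $\sum(r_i+1) = r - 1 + n$. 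Adding $N + N' + nr$ modulo $2$ and rewriting $\sum_{i<j}(r_i+1)(r_j+1) = \sum_{i<j} r_i(r_j+1) + \sum_{i<j}(r_j+1)$, together with the telescoping $\sum_{i=1}^{n-1}(n-i)(r_i+1) \equiv \sum_{j=2}^{n}(j-1)(r_j+1) \pmod 2$, should let one collapse everything to the claimed exponent $M = \binom{n+1}{2} - 1 + \sum_{i<j} r_i(r_j+1) + r_1 + r$.

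Finally, the canonicity assertion, namely that the resulting $A_\infty$-algebra structure on $\Ext_A$ is $A_\infty$-quasi-isomorphic to $BA^\vee$, follows from the fact that Theorem~\ref{thm:HTT} produces a homotopy retract $(j,q,k)$ between $\Omega_\infty \Tor_A$ and $\Omega_\infty BA$ of dga algebras, so dualizing gives a quasi-isomorphism between $\Ext_A$, endowed with the transferred structure, and $BA^\vee$; the maps $D^n$ together with the prefactors $(-1)^{n\sum|\varphi_i|}$ precisely encode passage between dualized coproducts and products in the sign conventions we have adopted.

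The main obstacle is unambiguously the sign computation in the second paragraph: all the combinatorial content is handled by Proposition~\ref{prop:vewyimp} and Theorem~\ref{thm:MRcop}, but one must be careful to consistently interpret $|\gamma_i^\vee|$ and to track the extra Koszul signs introduced when threading chains through $D^n$. I expect no conceptual surprises, but a patient calculation modulo $2$ is unavoidable.
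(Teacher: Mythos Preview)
Your approach is exactly the one the paper intends: the statement is marked \qed\ because the preceding paragraph already sets up the dualization $\Ext_A=\Tor_A^\vee$, the maps $D^n$, and the definition of $\mu_n$, so the result is meant to follow from Theorem~\ref{thm:MRcop} by the sign bookkeeping you outline.

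One concrete slip to fix: the congruence you call a ``telescoping'',
\[
\sum_{i=1}^{n-1}(n-i)(r_i+1)\;\equiv\;\sum_{j=2}^{n}(j-1)(r_j+1)\pmod 2,
\]
is \emph{not} an identity (already for $n=2$ it reads $r_1+1\equiv r_2+1$). What actually happens is that these two sums \emph{add}, not cancel: the term $\sum_{j=2}^n(j-1)(r_j+1)$ coming from $\sum_{i<j}(r_j+1)$ in your expansion of $N'$ combines with $\sum_{i=1}^{n-1}(n-i)(r_i+1)$ from $N$ to give $(n-1)\sum_{i=1}^n(r_i+1)=(n-1)(r+n-1)$, and it is this, together with your $nr$ prefactor, that collapses against the constants. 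Once you replace the false congruence by this addition, the rest of your plan goes through unchanged.
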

{{}} 
\textcolor{comcol}{Let us make the model explicit for
the monomial algebra $K$, we will address the 
algebra $J$ later (but note that its associated
monomial algebra has the same Anick chains
as $J$). The dg
model for $K$ is a free algebra $T(x,y,z,\alpha,\beta,\Gamma,\Lambda)$ where the first three
generators are in homological degree $0$, the 
next two in homological degree $1$ and the last 
two in homological degree $2$. The differentials
are as follows:
\[ d\alpha = xy^2,\quad  d\beta = y^2x ,\quad d\Gamma = x\beta -\alpha z, \quad d\Lambda = xy\beta - \alpha yz. \]
These can be read off the (unique) 3-decompositions of the relations into a 
concatenation of variables (0-chains), the
the 2-decompositions of $xy^2z$ into $x\cdot y^2z$ and $xy^2\cdot z$, and the 3-decompositions of
$xy^3z$ into $x\cdot y\cdot y^2z$ and $xy^2\cdot y\cdot z$.
}
\subsection{The extension to monomial quiver algebras}

We now observe that the results of these notes extend 
without any non-trivial modification to the more general class 
of monomial quiver algebras.

{{}} Fix a quiver $Q = (Q_0,Q_1,s,t)$ and 
a set $R$ of paths in $Q$ of length at least two,
none of which is a divisor of another. We call
$A = \kk Q/(R)$ a \new{monomial quiver algebra}. Let us
write $\uk$ for the semi-simple $\kk$-algebra $\kk Q_0$,
so that there is an augmentation $A\longrightarrow \uk$.
We set $\Tor_A = \Tor_A(\uk,\uk)$, and write $BA$
for the bar construction of $A$, where unadorned $\otimes$ are 
now taken over $\uk$. Thus, a generic basis element of $BA$
in degree $n\in\NN$ is of the form $[a_1\vv\cdots\vv a_n]$
where $t(a_i) = s(a_{i+1})$ for each $i\in \{1,\ldots,n-1\}$. Since $\uk$ is semi-simple 
over $\kk$, we can consider this alternative
bar construction instead.

{{}} The methods of Subsection 2.3 go through to produce a homotopy 
retract datum from $BA$ to $\Tor_A$, and select a basis
of $\Tor_A$ of critical vertices given by chains: $\Tor_A^1$ has
basis $\{ [a] : a\in Q_1\}$, and for $n\in\NN$
a basis of $\Tor_A^{n+1}$ is given by bar terms 
$[u_0\vv\cdots\vv u_n]$ where $t(u_i)=s(u_{i+1})$ and
$u_iu_{i+1}=0$ minimally for each $i\in \{0,\ldots,n-1\}$. The
description of the action of the homotopy on fully attached terms
is unchanged, as is the exchange rule. 

{{}} The notion of decompositions of a chain carry through
to this setting, as well as the technical work of Section 3.
As an end result we obtain the following 
description of a minimal model for monomial 
quiver algebras: \textcolor{comcol}{there is no 
change on the Morse matching, the action of the 
homotopy and the exchange rule, or any other 
detail: the fact that we can do things relative 
to $\uk$ makes any pathology that can arise in 
$\kk Q$ due to non-concatenable arrows disappear, 
since elements of the bar construction just look 
like those in the usual one over $\kk$, but with 
the extra condition of concatenability. This last 
condition ensures that spurious cycles, say of 
the form $[x|y]$, arising from zero 
multiplication due $x$ and $y$ not being 
concatenable, disappear, so everything works 
like in the case $Q$ is 
a bouquet. We refer the reader to~\cite{Cibils}*{Lemma 2.1} where it is shown the relative
double sided bar construction is a projective
resolution of $A=\kk Q/(R)$ as an $A$-bimodule (note the hypothesis that $A$ be finite
dimensional is not really needed there).}

{{}} Let us remark that we also have, implicitly,
obtained comparison maps between the bar resolution $B(A,A)$
of $\uk$ and the Green--Happel--Zacharia resolution 
$\Tor_A\otimes_\tau A$ of $\uk$ that are part of a homotopy retract 
datum; see~\cite{GHZ} for details.  Naturally, we have a dual result for the
Yoneda algebra $\Ext_A(\uk,\uk)$ of $A$, which we also record.

\begin{theorem}\label{thm:QMRmin}
For each quiver monomial algebra $A$ there is a minimal model
$B\longrightarrow A$ where $B = \Omega_\infty\!
\Tor_A$, and for a chain $\gamma\in\Tor_A$ the differential $d$ acts by
\[ d\gamma  = -\sum_{n\geqslant 2}
	(-1)^{\binom{n+1}{2}+|\gamma_1|} 
	  \gamma_1 \cdots \gamma_n, 
	 		\]
where the sum ranges through all possible decompositions of $\gamma$.\qed \end{theorem}

\begin{theorem}\label{thm:ExtQ}
There is a canonical $A_\infty$-algebra structure on $\Ext_A$ given as follows.
If $n\in\NN_{\geqslant 2}$ and if $\gamma_1^\vee,\ldots,\gamma_n^\vee$ 
are chains in $\Ext_A$ of respective lengths $(r_1,\ldots,r_n)$, then 
\[ \mu_n(\gamma_1^\vee \otimes\cdots\otimes\gamma_n^\vee)=(-1)^M\gamma^\vee\]
 
if the concatenation $\gamma = \gamma_1\cdots\gamma_n$ is a chain of length 
$r=r_1+\cdots+r_n+1$ where $M$ is the integer $\binom{n+1}2-1+ \sum_{i<j} r_i(r_j+1) + r_1+r$. Otherwise,
this higher product is zero. \qed
\end{theorem}

{{}} Let us remark that the theorem above is a common 
generalisation of the results in \cite{GreenZ} and in~\cite{HeLu},
the latter in the case of monomial algebras. In the first 
the authors describe a multiplicative basis of $\Ext_A$ for $A$ a 
monomial quiver algebra given in terms of Anick chains, and show if
$\gamma_1$ and $\gamma_2$ are chains, then $\gamma_1\smile \gamma_2$
is zero unless the concatenation $\gamma_1\gamma_2$ is a chain, 
in which case $\gamma_1\smile \gamma_2=\gamma_1\gamma_2$. In 
the second, the authors describe the higher products in $\Ext_A$
for monomial algebras that are $p$-Koszul, and show that the chains 
involved in a product $\mu_p(\gamma_1\otimes\cdots\otimes \gamma_p)$ are all 
of odd homological degree. A calculation shows that
the only term that contributes to a sign in the integer $M$ of 
Theorem~\ref{thm:Ext} is the binomial coefficient $\binom{p+1}2$. 
Switching to the sign convention for the Stasheff identities used
in \cite{HeLu} removes this sign, and then our result coincides with 
their result exactly: the higher product of $\gamma_1\otimes\cdots \otimes \gamma_p 
\in \Ext_A^{\otimes n}$ is zero unless the chains $\gamma_1,\ldots,
\gamma_p$ concatenate, in this order, to a chain $\gamma$ of the correct homological degree, in which 
case $\mu_p(\gamma_1\otimes\cdots\otimes \gamma_p) = \gamma$.

\section{Some applications}\label{sec4:Apps}

\subsection{Computation of invariants and operations}
We now use our description of the minimal model of a monomial algebra to 
obtain a model of its Hochschild cochain complex; we refer the reader to \cite{MarklDef}*{2.1} for the definition of this cohomology 
theory and a panorama of its relation to deformation theory, higher structures, and homotopy theory of algebras. If $f : B \longrightarrow B'$
is a map of dga algebras, a map $\partial : B
\longrightarrow B'$ is an \new{$f$-derivation} if
$\partial \mu = \mu(f\otimes \partial + \partial \otimes f)$, and
we write $\Der_f(B,B')$ for the
space of such $f$-derivations. When $B=B'$ and $f$ is
the identity of $B$, we write $\Der(B)$ for such 
space. 
For convenience, we will denote
$\Omega_\infty\!\Tor_A$ by $B$ in what follows. We write
$\HH^*(A)$ for the Hochschild cohomology of an algebra $A$ with
coefficients in itself.
 
{{}} Having obtained a minimal model $\alpha : B\longrightarrow A$ for $A$, we can
produce a cochain complex to compute the Hochschild cohomology of $A$ as
follows. There is a map $\tau : \Tor_A \longrightarrow A$ of degree $-1$ which extends uniquely to the map of algebras $\alpha$, such that 
$\tau[x] = x$ for each variable of $x\in A$.
This is a \emph{twisting cochain} in the sense of~\cite{Pro}: it satisfies the
\emph{Maurer--Cartan equation}
\[\label{eq:MC} \partial \tau + \sum_{n\geqslant 1}(-1)^{\binom n2} \tau^{[n]} = 0 \]
where $\tau^{[n]}: C\to A$ is defined by the composition $\mu^{(n)}\tau^{\otimes n}\Delta_n$. Indeed, $\partial \tau$ is zero since $A$ has trivial differential, and 
for an Anick chain $\gamma$, $\tau^{[n]}(\gamma)$ is zero for trivial reasons
unless $\gamma$ is a $1$-chain of length $n$, in which case $\tau^{[n]}(\gamma)$ is
simply the image of $\gamma$ in $A$, a relation, and is thus zero. Note the Maurer--Cartan equation is equivalent to the fact $\alpha b$ vanishes,
where $b$ is the map of Theorem~\ref{thm:MRmin}.

{{}} From this we obtain the \emph{twisted hom-complex associated to $\tau$}, 
which we denote by $\hom_\tau(\Tor_A,A)$. Its underlying graded vector space 
is $\hom(\Tor_A,A)$, the space of graded $\kk$-linear maps $\Tor_A
\longrightarrow A$, and its differential is obtained as follows. Let 
us write $\mathcal D_A$ for the space of $\alpha$-derivations 
$\Der_\alpha(B,A)$ and $\mathcal T_A$ for the twisted 
chain complex $\hom_\tau(\Tor_A,A)$. Observe that if $f : \Tor_A^0
\longrightarrow A$ is an element of $\mathcal T_A^0$, which amounts to
an element $a \in A$, we have a map $d_f : \Tor_A^1\longrightarrow A$
given by $d_f[x] = [a,x]$, which extends uniquely to a derivation in 
$\mathcal D_A$, and gives us a map $j_A : A \longrightarrow \mathcal D_A$. 
Moreover, if $F \in \mathcal D_A$ is a derivation, the fact that $\alpha d=0$
means that $d^*(F) = (-1)^{|F|-1} Fb$ is an $\alpha$-derivation, and 
$\mathcal D_A$ is then a cochain complex with differential $d^*$. We form the
cone of $j_A$ which we denote by $A\oplus \mathcal D_A[-1]$ and now 
record the following proposition and refer the reader to \cite{BriBen}*{\S 2.3}
for details. 

\begin{prop}
There is an isomorphism $A\oplus \mathcal D_A[-1] \longrightarrow 
\mathcal T_A$  of graded vector spaces that sends a derivation in the domain to 
the suspension of its restriction to $\Tor_A$ and identifies $A$ with
$\hom(\Tor^0_A,A)$. The differential of $\mathcal T_A$ is induced from this 
isomorphism, so that if $f : \Tor_A\longrightarrow A$ is a linear map of nonzero
degree, $df$ is the suspension of the restriction of $d^*(F)$ to $\Tor_A$,
where $F$ is the unique derivation in $\mathcal D_A$ extending $f$. If
$f : \Tor_A^0\longrightarrow A$ is linear, then $df : \Tor_A^1 \longrightarrow A$
is the map given by $x\longmapsto [f[],x]$.
\qed
\end{prop}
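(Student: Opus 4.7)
The plan is to construct the isomorphism $\Phi \colon A \rtimes s^{-1}\mathcal D_A \to \mathcal T_A$ componentwise, leaning on the fact that $\mathcal M = \Omega_\infty\!\Tor_A$ is free as a graded algebra on $s^{-1}\Tor_A^{\geqslant 1}$, and then to transport the cone differential along $\Phi$ and unravel what it becomes. Concretely, to each $a\in A$ I associate the map $\phi_a \colon \Tor_A \to A$ that sends the unit $[\,]\in\Tor_A^0$ to $a$ and vanishes on $\Tor_A^{\geqslant 1}$; to each $s^{-1}F \in s^{-1}\mathcal D_A$ I associate the map $\Tor_A \to A$ that vanishes on $\Tor_A^0$ and sends $x\in\Tor_A^{\geqslant 1}$ to $F(s^{-1}x)$. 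Setting $\Phi(a + s^{-1}F) = \phi_a + f_F$ gives the candidate.

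To establish the graded vector space isomorphism, I would split $\mathcal T_A = \hom(\Tor_A^0,A)\oplus \hom(\Tor_A^{\geqslant 1},A)$. The first summand is canonically $A$ via evaluation at $[\,]$. For the second, the freeness of $\mathcal M$ as a graded algebra gives the natural restriction bijection $\mathcal D_A \cong \hom(s^{-1}\Tor_A^{\geqslant 1}, A)$; a degree bookkeeping, using that $s^{-1}$ lowers degree by one, shows that applying $s^{-1}$ to both sides identifies $s^{-1}\mathcal D_A$ with $\hom(\Tor_A^{\geqslant 1}, A)$. So $\Phi$ is a well defined degree-preserving linear isomorphism.

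Next I would transport the differential along $\Phi$. Because $A$ has trivial differential, the cone differential on $A \rtimes s^{-1}\mathcal D_A$ takes the schematic form $d(a + s^{-1}F) = s^{-1}j_A(a) \pm s^{-1}b^{*}(F)$, with signs dictated by the usual mapping-cone conventions. For an element $s^{-1}F \in s^{-1}\mathcal D_A$, the second term of the cone differential is, up to sign, $s^{-1}b^{*}(F)$, whose $\Phi$-image is precisely the desuspension of the restriction of $b^{*}(F) = (-1)^{|F|-1}F b$ to $s^{-1}\Tor_A$, yielding exactly the stated formula for $df$ in nonzero degree. For an element $a\in A$, identified under $\Phi$ with $\phi_a\in\hom(\Tor_A^0,A)$, the connecting piece $s^{-1}j_A(a)$ is transported to a map $\Tor_A^1 \to A$ whose value on a variable $x$ is $j_A(a)(s^{-1}[x]) = [a,x]$, which rewrites as $[\phi_a[\,], x] = [f[\,], x]$: this is the commutator formula claimed in the final sentence of the proposition.

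The only real obstacle is sign discipline: three separate shifts are in play (the suspension in $\Omega_\infty$, the desuspension in $s^{-1}\mathcal D_A$, and the transposition sign coming from the identification $\mathcal D_A \cong \hom(s^{-1}\Tor_A^{\geqslant 1},A)$), so one must fix a convention once and follow it through. Beyond this no conceptual difficulty remains, since all statements reduce to the universal property of $\mathcal M$ as a free graded algebra and to unwinding the cone of $j_A$; for this reason the argument is elided here and the reader is referred to \cite{BriBen}*{\S 2.3}.
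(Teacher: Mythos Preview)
The paper does not actually give a proof of this proposition: it is stated with a trailing $\qed$ and the reader is referred to \cite{BriBen}*{\S 2.3} for details. Your outline is therefore not competing with any argument in the paper, and in fact supplies precisely the kind of sketch the paper omits. The two key ingredients you identify---that $\alpha$-derivations out of a free graded algebra are determined by their restriction to generators, giving $\mathcal D_A \cong \hom(s^{-1}\Tor_A^{\geqslant 1},A)$, and that the cone differential of $j_A$ unpacks to $b^*$ on the derivation piece and to the commutator map on the $A$ piece---are exactly right, and your closing reference matches the paper's. The only caveat is the one you already flag: the three interacting suspensions require a fixed sign convention carried through consistently, but this is bookkeeping rather than a gap.
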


{{}}\label{para:HH} The usual Hochschild complex is the twisted complex 
$\hom_\pi(BA,A)$ where $\pi : BA\longrightarrow A$ is the projection onto $A$ from
the bar construction of $A$, with twisted differential 
$  \partial_{BA}^* + [\pi,-] $. The map
$A\oplus \Der(B,A) \longrightarrow	
	A\oplus \Der(\Omega BA,A)$
induced by the homotopy equivalence $B\longrightarrow
\Omega BA$ from Theorem~\ref{thm:HTT},
induces, in turn, a morphism $\hom_\tau(\Tor_A,A)\longrightarrow \hom_\pi(BA,A)$. Since $\Omega_\infty(q)$
is a homotopy equivalence, this map is a quasi-isomorphism, so the cohomology of $\mathcal T_A$ is precisely $\HH^*(A)$.

{{}} The next proposition addresses the computation of cup
products in $\HH^*(A)$ using the complex $\mathcal T_A$
which computes it. We note that, in fact, 
this complex is an $A_\infty$-algebra, and that its multiplication
induces the cup product in Hochschild cohomology. We
refer the reader to \cite{Has}*{Chapter 8, \S 1} for details.

\begin{prop}\label{prop:highercups}
For each $n\in\NN_{\geqslant 2}$, define a higher product 
$ \mu_n : \mathcal T_A^{\otimes n} \longrightarrow \mathcal T_A$ so that for
linear maps $f_1,\ldots,f_n\in \mathcal T_A$, $ \mu_n(f_1\otimes\cdots\otimes f_n)
(\gamma) =(-1)^N \mu^{(n)}_A(f_1\otimes\cdots\otimes f_n)\Delta_n(\gamma)$, where
we set $N=n(|f_1|+\cdots+|f_n|+1)$. These maps define on $\mathcal T_A$ an 
$A_\infty$-algebra structure, and on cohomology the map
$\mu_2$ induces the cup product of $\HH^*(A)$. \qed
\end{prop}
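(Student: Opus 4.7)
The plan is to recognise $(\mathcal T_A, (\mu_n)_{n\geqslant 1})$ as a twisted convolution $A_\infty$-algebra built from the $A_\infty$-coalgebra $\Tor_A$ and the dga algebra $A$ (viewed as an $A_\infty$-algebra with trivial higher operations $\mu_n=0$ for $n\geqslant 3$). The starting point is the general fact that, for any $A_\infty$-coalgebra $C$ and any $A_\infty$-algebra $B$, the graded space $\hom(C,B)$ carries a natural $A_\infty$-algebra structure whose higher operations are obtained by pairing the higher coproducts of $C$ with the higher products of $B$. In our setting the only non-trivial product of $A$ is $\mu_A^{(2)}$, so these convolution operations collapse to $\mu_n(f_1\otimes\cdots\otimes f_n)(\gamma) = \pm\,\mu_A^{(n)}(f_1\otimes\cdots\otimes f_n)\Delta_n(\gamma)$. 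The Stasheff identities $\mathrm{SI}(n)$ for the convolution operations then follow directly from the identities $\mathrm{SI}(n)$ satisfied by $(\Delta_n)_{n\geqslant 1}$ on $\Tor_A$, together with the (strict) associativity of $\mu_A$: a given term $\mathrm{SI}(n)$ decomposes as a sum indexed by ways of splitting a tree of multiplications, and each summand cancels against one produced by the coproduct identity.

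Next, I would account for the twisting by $\tau$. The Maurer--Cartan equation \eqref{eq:MC} is exactly the condition that $\tau$ is a Maurer--Cartan element of the (untwisted) convolution $A_\infty$-algebra $\hom(\Tor_A,A)$, so the standard twisting procedure produces a new $A_\infty$-algebra on the same underlying graded space, whose higher operations $\mu_n^\tau$ agree with the untwisted $\mu_n$ for $n\geqslant 2$ (since $\tau$ is of degree zero and insertions of $\tau$ into $\mu_{n+k}$ only modify the lower operations), while the new differential $\mu_1^\tau$ absorbs all the insertions of $\tau$. A direct comparison with the description of the differential on $\mathcal T_A$ obtained through the isomorphism $\mathcal T_A \cong A\rtimes s^{-1}\mathcal D_A$ of the preceding proposition shows that $\mu_1^\tau$ and the differential on $\mathcal T_A$ coincide; this uses that an $\alpha$-derivation of $\mathcal M$ is determined by its restriction to $\Tor_A$ and that $b$ encodes precisely the sequence $(\Delta_n)$.

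For the claim about the cup product, I would use the machinery of paragraph~\ref{para:HH}. The homotopy retract of Theorem~\ref{thm:HTT} yields a dga quasi-isomorphism $q : \Omega_\infty\!\Tor_A\to\Omega BA$ which, by functoriality of the twisted hom-complex construction, induces a quasi-isomorphism of complexes $\hom_\tau(\Tor_A,A)\to \hom_\pi(BA,A)$. In fact this map is a morphism of $A_\infty$-algebras for the convolution structures on both sides, since the $A_\infty$-structure on the target reduces on $\mu_2$ to the classical cup product on the Hochschild complex. Passing to cohomology and invoking \cite{Has}*{Chapter 8, \S 1} for the identification of $\mu_2$ with the cup product on the bar side then transfers this identification to $\mathcal T_A$, giving the second claim.

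The main obstacle I expect is the sign bookkeeping. The sign $(-1)^{N}$ with $N=n(|f_1|+\cdots+|f_n|+1)$ must reproduce precisely the Koszul signs dictated by the conventions of \cite{Markl} after twisting by $\tau$; in particular one must check that the signs produced by the twisting procedure (which involves inserting $\tau$ in every slot of $\mu_{n+k}$ with its own Koszul signs) combine with the convolution signs to give exactly $N$. Once the signs for $\mu_n$ are verified, the corresponding sign in the definition of $\mu_1^\tau$ falls out automatically, and the remainder of the argument is formal.
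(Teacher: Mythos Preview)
The paper does not actually prove this proposition: it is stated with a \qed\ and the surrounding paragraph simply refers the reader to \cite{Has}*{Chapter~8, \S 1}. Your convolution-plus-twisting outline is exactly the framework that reference develops, so conceptually you are on the right track and there is nothing to compare against on the paper's side.

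There is, however, a genuine error in the middle of your argument. You write that ``$\tau$ is of degree zero and insertions of $\tau$ into $\mu_{n+k}$ only modify the lower operations''. Neither part is correct. First, the paper explicitly states that $\tau:\Tor_A\to A$ has degree $-1$. Second, and more importantly, twisting an $A_\infty$-algebra by a Maurer--Cartan element does \emph{not} leave the higher operations unchanged when the algebra has non-trivial $\mu_m$ for arbitrarily large $m$: the twisted operation $\mu_n^\tau$ receives a contribution $\mu_{n+k}(\tau^{\otimes i_0},f_1,\tau^{\otimes i_1},\ldots,f_n,\tau^{\otimes i_n})$ for every $k\geqslant 0$, and here the untwisted $\mu_m$ are non-zero for all $m\geqslant 2$. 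Concretely, $\mu_3(\tau,f,g)(\gamma)=\pm\sum \tau(\gamma_1)f(\gamma_2)g(\gamma_3)$ where the sum runs over decompositions of $\gamma$; since $\tau$ is non-zero precisely on $0$-chains (variables), and decompositions with $\gamma_1$ a variable certainly exist, this term contributes non-trivially to $\mu_2^\tau$. So $\mu_2^\tau\neq\mu_2$ in general, and your justification for why the higher products in the statement coincide with the twisted ones collapses.

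What survives is this: the untwisted convolution operations $(\mu_n)_{n\geqslant 2}$ together with $\mu_1=0$ do form an $A_\infty$-algebra on $\hom(\Tor_A,A)$, by the argument you give in your first paragraph; and $\tau$ is Maurer--Cartan in it, so the twisted structure $(\mu_n^\tau)_{n\geqslant 1}$ is an honest $A_\infty$-algebra whose $\mu_1^\tau$ is the differential of $\mathcal T_A$. For the cup-product claim you should then argue with $\mu_2^\tau$ rather than $\mu_2$, or else supply a separate argument that the two induce the same product on $d_\tau$-cohomology. Either way, the step you flagged as ``the main obstacle'' (signs) is not where the real difficulty lies; it is the identification of the higher products after twisting.
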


{{}} It is fair to observe that the construction of our minimal
model requires the 
construction of a homotopy retract datum from $BA$ to $\Tor_A$,
and thus of comparison morphisms, which are usually difficult to produce. However, the construction of this
retraction is streamlined by the machinery of algebraic discrete 
Morse theory and, in fact, one may attempt to apply the methods 
outlined in \cite{ADMT} to any algebra admitting a Gr\"obner basis
to produce a model of it. Let us also remark that one need
not recourse to comparison maps to produce models of algebras. In the
article \cite{DotKho}, for example, the authors produce models for
monomial operads, in particular for monomial algebras, without
doing this. As explained in that article, one may use this model to 
understand not necessarily monomial algebras admitting a
Gr\"obner basis by the method of homological perturbation theory.
Remark, too, that in~\cite{ReRo} the authors produce chain 
comparison maps between the Bardzell resolution of a monomial quiver 
algebra and its usual bar resolution, and succeed in using them to compute 
the Gerstenhaber bracket on Hochschild cohomology of some examples. It may 
be the case that the maps of \cite{ReRo} are a part of a homotopy 
retract datum provided by algebraic discrete Morse theory \cite{ADMT,Skol}. 

\subsection{Computation of Tamarkin--Tsygan calculi}

We noted that the twisted complex $\mathcal T_A$ is naturally
isomorphic to the complex $A\oplus \mathcal D_A[-1]$. The morphism $\alpha :
B\longrightarrow A$ induces a map $\alpha\oplus\alpha_* : 
B\oplus \Der(B)[-1]\longrightarrow A\oplus \mathcal D_A[-1]$ 
by post-composition, which one can check is a quasi-isomorphism. The domain of 
this map is, naturally, a dg Lie algebra, whose cohomology is $\HH^*(A)$, and it 
is not hard to prove its Lie bracket induces the Gerstenhaber bracket on 
$\HH^*(A)$, which gives a description of the Gerstenhaber bracket of $A$ in terms of a model, without having recourse to the bar construction of $A$ or comparison morphisms. It seems the first intrinsic definition of the Gerstenhaber
bracket was given in~\cite{Sheff} by Stasheff, where it is shown, among
other things, that
the Lie bracket in the complex $\Coder(BA)$ of coderivations of the bar 
construction of $A$ induces the Gerstenhaber bracket on Hochschild cohomology.

It is important to note that the computation of 
$\HH^*(A)$ through this dg Lie algebra is plausible, for example,
if the model has finitely many generators; see \cite{FMT} for
two examples. In the case of monomial quiver algebras, it may very well
happen that, although $\Tor_A$ is locally finitely dimensional, it is
not finitely dimensional. There is, however, hope that computing
Hochschild cohomology, and thus the Gerstenhaber bracket, using derivations of a 
minimal model is feasible. Let us mention, too, that one can also compute
cyclic
homology and non-commutative de Rham homology of $A$ through a model 
following~\cite{FeiginTsygan}, using non-commutative differential forms.
These are treated in detail, for example, in~\cite{Karoubi}*{Chapter 1} 
and~\cite{Loday}*{Chapter 2,\S 6}. 

One can
in fact compute the Tamarkin--Tsygan calculus~\cite{TT} of $A$ through a model; \textcolor{comcol}{we have
pursued this in~\cite{TTC}, where in particular we 
use this minimal model to compute the 
Tamarkin--Tsygan calculus of some monomial
algebras.}
\medskip

\subsection{Support variety theory for Gorenstein monomial algebras.}
In joint work with V. Dotsenko and V. Gelinas~\cite{Gorenstein}, we used the higher structure
on $\Tor_A$ obtained here and the
notion of higher centres of Briggs--Gelinas to 
deduce that a monomial algebra satisfies
the FG conditions of Snashall--Soldberg if and
only if it is Gorenstein. We also showed that
in this case, if the algebra is of Gorenstein
dimension $d$, there is a periodicity operator
on Hochschild cohomology whose cup product map
induces isomorphism in degrees above $d$, and that
its Tate--Hochschild cohomology is given by its periodic
Hochschild cohomology: it is simply obtained by
inverting this operator in Hochschild cohomology.

\subsection{The case of algebras with a Gröbner basis}

{{}} Let us put ourselves in the situation
where $A$ is a finitely generated algebra with
generators $V$ and ideal of relations $(R)$.
Pick a Gröbner basis with respect to a monomial 
order on $TV$, and let us write $A'$ for the monomial
algebra associated to $A$ and $B' = (TW,d')$ for
the minimal model of Theorem~\ref{thm:MRmin}. 
Note that since $W$ consists of monomials of $TV$,
this graded space is partially ordered by looking
at the support of a chain, and this order extends to
monomials lexicographically.

{{}} 
We claim that there exists a model $B = (TW,d)$ 
of $A$ such that for any $w\in W$, the terms 
appearing in $(d-d')(w)$ are smaller than $w$,
and such that the associated graded morphism
to $B\longrightarrow A$ is the model 
$B'\longrightarrow A'$ in the main theorem of
these notes. As before, let $(C,d)$ denote the
complex obtained from the Anick resolution
of $A$ that computes $\Tor_A$. Note that
Proposition~\ref{prop:flushright} is still
valid if we replace $\Tor_A$ with $C$, since 
at no point we used $A$ is monomial
to prove it. We also observe that the 
differential on $B'$ preserves the support
of a chain.

{{}} Naturally, to prove our claim,
it suffices we do it for each higher 
coproduct, including the possibly non-zero
differential $\Delta_1'$ on $C$. The work 
of Anick shows this differential decreases
the order of a chain, and the claim is obvious 
for $\Delta_2'$,
so we may only worry about $\Delta_n'$ for 
$n\in\NN_{\geqslant 3}$. In this case,
the recursive formula of Proposition~\ref{prop:flushright}
means it suffices we do this for the homotopy
$h$. But this follows from the fact it is built
from the differential of $BA$, which, after
rewriting possible non-zero products that appear,
decreases the order of the underlying monomial
of any bar term, independent of them being 
a cycle or not. From this we obtain the
desired result:

\begin{theorem}
Let $A$ be a finitely generated algebra with
a finite Gröbner basis, and
let $A'$ be its associated monomial algebra.
There exists a (possibly non-minimal) model $(B,d)\longrightarrow A$ such
that the associated graded morphism $(B,d')\longrightarrow A'$ is the model of Theorem~\ref{thm:MRmin}. More precisely, we can arrange
it so that $d-d'$ decreases the order of the
underlying monomial of a chain in $B$.
\end{theorem}

\begin{proof}
We have given some
details in the discussion preceding the
statement of the theorem to obtain a proof
following the strategy used to prove our
main theorem. Alternative, one can use a
homological perturbation argument completely analogous to~\cite{DotKho}*{Theorem 4.1},
where instead of starting with the (usually
non-minimal) model of the authors, one starts
with the minimal model of our main theorem
with the internal grading given by the
underlying monomial of an Anick chain.
\end{proof}
\noindent We remark that this theorem is not too 
surprising, since it is the non-linear analog of 
the work of S. Chohuy in his PhD thesis~\cite{Sergio},
with A. Solotar. The lack of an explicit formula for 
the perturbed differential makes this theorem 
uninteresting for computations: in concrete examples,
what we 
usually do is produce a perturbed differential
which squares to zero, since it is usually 
possible to come up with a candidate
of model and, through a filtration argument,
show it is indeed acyclic. However, we would like
to state the following

\begin{conjecture*}
Let $A$ be as before, and let $w$ be a chain
in the generators of the model $(B,d)\longrightarrow A$. Then the basis elements 
appearing in $dw$ are obtained as follows:
\begin{enumerate}[label = \emph{\textcolor{newcol}{\textbf{C\arabic*.}}}]
\setlength{\itemsep}{0pt}
  \setlength{\parskip}{0pt}\item Compute all possible decompositions of the
chain $w$.
\item Starting from the left, rewrite the chain 
$w$ once, and obtain all possible decompositions
into chains of the terms that appear after this.
\item Repeat this procedure until all terms
that appear are in normal form.
\end{enumerate}
\end{conjecture*}

{{}} As an example, let us consider the
algebra $J$ with two generators $x$ and $y$ 
subject to the relations $x^2=0$ and 
$y^2x=xy^2+xyx$, and lexicographical order with respect to $y>x$. The associated monomial algebra
$J'$ has relations $x^2=0$ and $y^2x=0$, and its
model has generators $x_0,y_0,x_1,y_1\ldots$ with
differential
\[d y_{n+1} =y^2x_n	+\sum_{\substack {s+t=n \\s\geqslant 1}}
					(-1)^sy_sx_t, \quad
					d x_{n+1} = \sum_{s+t=n} (-1)^sx_sx_t. \]
Here, for $n\in\NN$, the generator $y_n$ has
underlying ambiguity $y^2x^n$ while $x_n$
has underlying ambiguity $x^{n+1}$, which
our differential preserves. 
The differential then codifies all possible  
2-decompositions of $y^2x^n$ into $y^2x^s\cdot
x^t$ for $s+t=n$, and the unique $3$-decomposition $y\cdot y\cdot x^n$. Similarly,
$x^{n+2}$ only admits $2$-decompositions of
the form $x^{s+1}\cdot x^{t+1}$ where $s+t=n$. 
The model 
corresponding to the original algebra $J$ incorporates lower order
terms as follows:
\begin{align*}
d y_{n+1} = [y^2,x_n] -\sum_{s+t=n} x_s y x_t
	-\sum_{\substack {s+t=n \\t\geqslant 1}}
					(x_sy_t-(-1)^ty_tx_s),
					\quad
d x_{n+1} = \sum_{s+t=n} (-1)^sx_sx_t.
\end{align*}

{{}} It is routine to check this perturbed differential
squares to zero, so that we have obtained a model
of $J$. To illustrate our conjecture, let us
consider the term $y_2 = y^2x^2$. This can be
decomposed into the chains $y_0^2x_1$ and $y_1x_0$
and no others. Rewriting, we obtain two terms,
$xy^2x$ and $xyx^2$. The first can be decomposed
into $x_0y_1$ only, and the second into $xyx_1$.
We can only rewrite the first monomial, and 
we obtain $x^2y^2$ and $x^2yx$ which rewrite to zero. We can decompose
these into $x_1y^2$ and $x_1y_0x_0$, and no other
terms. Summing up, the basis elements that appear 
are the following:
$ y^2x_1$, $x_1y^2$,  $y_1x_0$,  $x_0y_1$,  $x_1yx_0$,  $x_1yx_0$.  
These are precisely those appearing in the formula
 for $dy_2$ above.
 \begin{multicols}{2}
\begin{bibdiv}
\begin{biblist}
\bib{Anick}{article}{
   author={Anick, D. J.},
   title={On the homology of associative algebras},
   journal={Trans. Amer. Math. Soc.},
   volume={296},
   date={1986},
   number={2},
   pages={641--659},
   issn={0002-9947},
   review={\MR{846601}},
}
\bib{AnickG}{article}{
   author={Anick, D. J.},
   author={Green, E. L.},
   title={On the homology of quotients of path algebras},
   journal={Comm. Algebra},
   volume={15},
   date={1987},
   number={1-2},
   pages={309--341},
   issn={0092-7872},
   review={\MR{876982}},
}
\bib{BriBen}{article}{
   author={Briggs, B.},
   author={Gelinas, V.},
   title={The $A_\infty$-centre of the Yoneda Algebra and the Characteristic Action of Hochschild Cohomology on the Derived Category},
   date={2017},
   eprint={arXiv:1702.00721}
}
\bib{Sergio}{article}{
   author={Chouhy, S.},
   author={Solotar, A.},
   title={Projective resolutions of associative algebras and ambiguities},
   journal={J. Algebra},
   volume={432},
   date={2015},
   pages={22--61},
   issn={0021-8693},
   review={\MR{3334140}},
   doi={10.1016/j.jalgebra.2015.02.019},
}
\bib{Cibils}{article}{
   author={Cibils, C.},
   title={Rigidity of truncated quiver algebras},
   journal={Adv. Math.},
   volume={79},
   date={1990},
   number={1},
   pages={18--42},
   issn={0001-8708},
   review={\MR{1031825}},
   doi={10.1016/0001-8708(90)90057-T},
}
\bib{Gorenstein}{article}{
   author={Dotsenko, V.},
   author={Gelinas, V.},
   author={Tamaroff, P.},
   title={Finite generation for Hochschild cohomology of Gorenstein monomial algebras},
   eprint = {arXiv:1909.00487 [math.KT]},
   date={2019},
   pages={38},
}
\bib{DotKho}{article}{
   author={Dotsenko, V.},
   author={Khoroshkin, A.},
   title={Quillen homology for operads via Gr\"obner bases},
   journal={Doc. Math.},
   volume={18},
   date={2013},
   pages={707--747},
   issn={1431-0635},
   review={\MR{3084563}},
}
\bib{MarklDef}{article}{
   author={Doubek, M.},
   author={Markl, M.},
   author={Zima, P.},
   title={Deformation Theory (Lecture Notes)},
   journal={Archivum mathematicum 43(5)},
   eprint = {arXiv:0705.3719 [math.AG]},
   date={2007},
   pages={333-371},
}
\bib{FMT}{article}{
   author={F\'elix, Y.},
   author={Menichi, L.},
   author={Thomas, J.-C.},
   title={Gerstenhaber duality in Hochschild cohomology},
   journal={J. Pure Appl. Algebra},
   volume={199},
   date={2005},
   number={1-3},
   pages={43--59},
   issn={0022-4049},
   review={\MR{2134291}},
   doi={10.1016/j.jpaa.2004.11.004},
}
\bib{FeiginTsygan}{book}{
   author={Feigin, B.},
   author={Tsygan, B.},
   subtitle={Cyclic homology of algebras with quadratic relations, 
universal enveloping algebras and group algebras},
   title={K-Theory, Arithmetic and Geometry},
   series={Lecture Notes in Mathematics},
   publisher={Springer--Verlag},
   date={1987},
   volume={1289},
   pages={210--239},
   }
\bib{GHZ}{article}{
   author={Green, E.},
   author={Happel, D.},
   author={Zacharia, D.},
   title={Projective resolutions over Artin algebras with zero relations},
   journal={Illinois J. Math.},
   volume={29},
   date={1985},
   number={1},
   pages={180--190},
   issn={0019-2082},
   review={\MR{769766}},
}
\bib{GreenZ}{article}{
   author={Green, E.},
   author={Zacharia, D.},
   title={The cohomology ring of a monomial algebra},
   journal={Manuscripta Math.},
   volume={85},
   date={1994},
   number={1},
   pages={11--23},
   issn={0025-2611},
   review={\MR{1299044}},
}
\bib{HeLu}{article}{
   author={He, J.-W.},
   author={Lu, D.-M.},
   title={Higher Koszul algebras and $A_\infty$-algebras},
   journal={J. Algebra},
   volume={293},
   date={2005},
   number={2},
   pages={335--362},
   issn={0021-8693},
   review={\MR{2172343}},
}
\bib{Hinich}{article}{
   author={Hinich, V.},
   title={Homological algebra of homotopy algebras},
   journal={Comm. Algebra},
   volume={25},
   date={1997},
   number={10},
   pages={3291--3323},
   issn={0092-7872},
   review={\MR{1465117}},
   doi={10.1080/00927879708826055},
}
\bib{ADMT}{article}{
   author={J\"ollenbeck, M.},
   author={Welker, V.},
   title={Minimal resolutions via 
algebraic discrete Morse theory},
   journal={Mem. Amer. Math. Soc.},
   volume={197},
   date={2009},
   number={923},
   pages={vi+74},
   issn={0065-9266},
   isbn={978-0-8218-4257-7},
   review={\MR{2488864}},
}
\bib{Karoubi}{article}{
   author={Karoubi, M.},
   title={Homologie cyclique et $K$-th\'eorie},
   language={French, with English summary},
   journal={Ast\'erisque},
   number={149},
   date={1987},
   pages={147},
   issn={0303-1179},
   review={\MR{913964}},
}
\bib{Kell}{article}{
   author={Keller, B.},
   title={$A_\infty$-algebras, modules and functor categories},
   conference={
      title={Trends in representation theory of algebras and related topics},
   },
   book={
      series={Contemp. Math.},
      volume={406},
      publisher={Amer. Math. Soc., Providence, RI},
   },
   date={2006},
   pages={67--93},
   review={\MR{2258042}},
}
\bib{Has}{article}{
   author={Lef\`evre-Hasegawa, K.},
   title={ Sur les A-infini catégories, \emph{PhD Thesis}},
   date={2003},
   eprint={arXiv:math/0310337 [math.CT]},
   pages={230},
}
\bib{Loday}{book}{
   author={Loday, J.-L.},
   title={Cyclic homology},
   series={Grundlehren der Mathematischen Wissenschaften [Fundamental
   Principles of Mathematical Sciences]},
   volume={301},
   note={Appendix E by Mar\'\i a O. Ronco},
   publisher={Springer-Verlag, Berlin},
   date={1992},
   pages={xviii+454},
   isbn={3-540-53339-7},
   review={\MR{1217970}},
   doi={10.1007/978-3-662-21739-9},
}
\bib{LV}{book}{
   author={Loday, J.-L.},
   author={Vallette, B.},
   title={Algebraic operads},
   series={Grundlehren der Mathematischen Wissenschaften [Fundamental
   Principles of Mathematical Sciences]},
   volume={346},
   publisher={Springer, Heidelberg},
   date={2012},
   pages={xxiv+634},
   isbn={978-3-642-30361-6},
   review={\MR{2954392}},
   doi={10.1007/978-3-642-30362-3},
}
\bib{Markl}{article}{
   author={Markl, M.},
   title={Transferring $A_\infty$ 
(strongly homotopy associative) 
structures},
   journal = {Rend. Circ. Mat. Palermo (2) Suppl.},
     number = {79},
      year = {2006},
     pages = {139--151},
      issn = {1592-9531},
}
\bib{Pro}{article}{
   author={Prout\'e, A.},
   title={$A_\infty$-structures, \emph{PhD Thesis}},
   journal = {Reprints in Theory and Applications of Categories, No. 21},
   date={2011},
   pages={1--99}
}
\bib{ReRo}{article}{
   author={Redondo, M. J.},
   author={Rom\'an, L.},
   title={Comparison Morphisms Between Two Projective Resolutions of Monomial Algebras},
   journal = {Revista de la Uni\'on Matem\'atica Argentina},
   date={2018},
   pages={1--31},
}
\bib{Singer}{article}{
   author={Singer, W. M.},
   title={On the minimality of Anick's resolution, with application to an
   algebra of cohomology operations},
   journal={J. Algebra},
   volume={179},
   date={1996},
   number={3},
   pages={918--929},
   issn={0021-8693},
   review={\MR{1371751}},
   doi={10.1006/jabr.1996.0044},
}
\bib{Skol}{article}{
   author={Sk\"oldberg, E.},
   title={Morse theory from an algebraic viewpoint},
   journal={Trans. Amer. Math. Soc.},
   volume={358},
   date={2006},
   number={1},
   pages={115--129},
   issn={0002-9947},
   review={\MR{2171225}},
   doi={10.1090/S0002-9947-05-04079-1},
}
\bib{Sheff}{article}{
   author={Stasheff, J.},
   title={The intrinsic bracket on the deformation complex of an associative
   algebra},
   journal={J. Pure Appl. Algebra},
   volume={89},
   date={1993},
   number={1-2},
   pages={231--235},
   issn={0022-4049},
   review={\MR{1239562}},
   doi={10.1016/0022-4049(93)90096-C},
}
\bib{TT}{article}{
   author={Tamarkin, D.},
   author={Tsygan, B.},
   title={The ring of differential operators on forms in noncommutative
   calculus},
   date={2005},
   pages={105--131},
   review={\MR{2131013}},
   doi={10.1090/pspum/073/2131013},
}
\bib{TTC}{article}{
   author={Tamaroff, P.},
   title={The Tamarkin--Tsygan calculus of an algebra a la Stasheff},
   journal = {Homol. Homotopy Appl.},
   date={2021},
   pages={26},
   volume={23(1)},
}
\bib{HTHA}{article}{
   author={Vallette, B.},
   title={Homotopy theory of homotopy algebras},
   date={2014},
   pages={32},
   eprint = {arXiv:1411.5533 [math.AT]},
}
\end{biblist}
\end{bibdiv}
\Addresses
\end{multicols}

\end{document}